\numberwithin{equation}{section}
\newtheorem{theorem}{Theorem}[section]
\newtheorem{proposition}[theorem]{Proposition}
\newtheorem{corollary}[theorem]{Corollary}
\newtheorem*{theorem a}{Theorem A}
\theoremstyle{definition}
\newtheorem{remark}[theorem]{Remark}
\newtheorem{definition}[theorem]{Definition} 
\newtheorem*{assumption}{Assumption~A} 
\newtheorem*{assumptionB}{Assumption~B}
\newcounter{condition}
\newcommand{\C}{\ensuremath{C}}
\newcommand{\R}{\ensuremath{\mathbb{R}}}
\newcommand{\cal}[1]{\ensuremath{\mathcal{#1}}}
\newcommand\RR\R
\newcommand\pd\partial
\newcommand{\al}{\alpha}
\newcommand{\de}{\delta}
\newcommand{\ep}{\epsilon}
\newcommand{\la}{\lambda}
\newcommand{\si}{\sigma}
\newcommand{\vp}{\varphi}
\newcommand{\De}{\Delta}
\newcommand{\La}{\Lambda}
\newcommand{\Om}{\Omega}
\def\G{\mathbf G}
\def\K{\mathbf K}
\DeclareMathOperator{\vol}{vol}
\DeclareMathOperator{\Id}{Id}
\newcommand\loc{_{\mathrm{loc}}}
\newcommand\avg{_{\mathrm{avg}}}
\newcommand\Dir{_{\mathrm{Dir}}}
\newcommand{\hf}{\widehat{f}}
\newcommand{\hu}{\widehat{u}}
\newcommand{\hg}{\widehat{g}}
\renewcommand\leq\leqslant
\renewcommand\geq\geqslant
\renewcommand\epsilon\varepsilon
\begin{document}

\title[Overdetermined boundary problems with general
nonlinearities]{Solutions to the overdetermined boundary problem for
  semilinear equations with position-dependent nonlinearities}

   % author one information
\author[M.~Dom\'{\i}nguez-V\'{a}zquez]{Miguel Dom\'{\i}nguez-V\'{a}zquez}
\address{Departamento de Matem\'aticas, Universidad Aut\'onoma de Madrid, and Instituto de Ciencias Matem\'aticas, CSIC-UAM-UC3M-UCM, Madrid, Spain.}

\email{miguel.dominguezv@uam.es}
   % author two information
\author[A.~Enciso]{Alberto Enciso}
 \address{Instituto de Ciencias Matem\'aticas, Consejo Superior de
 	Investigaciones Cient\'ificas, Madrid, Spain.}
\email{aenciso@icmat.es, dperalta@icmat.es}
% \email{aenciso@icmat.es}
   % author two information
\author[D.~Peralta-Salas]{Daniel Peralta-Salas}
% \address{Instituto de Ciencias Matem\'aticas (CSIC-UAM-UC3M-UCM), Madrid, Spain.}
% \email{dperalta@icmat.es}

%
\subjclass[2010]{35N25, 49Q10, 58J05, 58J32, 58J37}

%\date{}

\begin{abstract}
  We show that a wide range of 
  overdetermined boundary problems for semilinear equations with
  position-dependent nonlinearities admits
  nontrivial solutions. The result holds true both on~$\RR^n$ and on
  compact Riemannian manifolds. As a byproduct of the proofs we also obtain
  some rigidity, or partial symmetry, results for solutions to overdetermined problems on
  Riemannian manifolds of nonconstant curvature.
\end{abstract}

\keywords{Overdetermined boundary value problems, semilinear elliptic problems, asymptotically homogeneous spaces, symmetric spaces, harmonic spaces.}

\maketitle

%    Text of article.
\section{Introduction}
Let $M$ be a compact manifold of dimension $n\geq 2$ endowed with a
Riemannian metric that we will denote by~$g$
or~$\langle\cdot,\cdot\rangle$. In this paper we are interested in
proving the existence of domains $\Omega\subset M$ where there is a nontrivial solution~$u>0$ to the overdetermined boundary value problem
\begin{equation}\label{eq:onp}
\left\{
\begin{array}{rcll}
\Delta_g u+\lambda f(\cdot,u)&=&0 & \text{in } \Omega 
\\
u&=&0 & \text{on } \partial\Omega
\\
\langle\nabla_{g}u,\nu_{g}\rangle &=& \text{constant}& \text{on } \partial\Omega.
\end{array}
\right.
\end{equation}
Here $\lambda$ is some real constant,
$f(p,z)\in\C_{\mathrm{loc}}^{1,\alpha}(M\times\R)$ is a given
nonlinearity that may depend on the point $p$ of the manifold $M$, $\Delta_g$
is the Laplacian on the manifold and, as is customary, the
Neumann condition is formulated in terms of the gradient with respect
to the metric $g$, which we denote by $\nabla_g$, and the unit outer
normal $\nu_g$ of the domain.

It should be emphasized that essentially all the existing literature
on overdetermined problems focuses on the case when the nonlinearity
only depends on~$u$, that is, $f(p,z)=G(z)$ for some function $G\in
\C^{1,\al}\loc(\RR)$, in which case the domains for which the
overdetermined problem admits a solution are usually called {\em
  $G$-extremal domains}:
\begin{equation}\label{eq:onp_laplacian}
\left\{
\begin{array}{rcll}
\Delta_g u+\lambda G(u)&=&0 & \text{in } \Omega 
\\
u&=&0 & \text{on } \partial\Omega
\\
\langle\nabla_{g}u,\nu_{g}\rangle &=& \text{constant}& \text{on } \partial\Omega.
\end{array}
\right.
\end{equation}

The investigation of overdetermined boundary value problems traces
back to Serrin's seminal paper~\cite{Serrin} in 1971, which concerns
translation-invariant overdetermined problems on~$\R^n$ of the
form~\eqref{eq:onp_laplacian}, the model case being $G(z)=1$. In this case, it is clear that there is a radial solution
to the overdetermined problem when the domain is a ball. By generalizing the
moving plane method developed by Alexandrov~\cite{Alex} in 1956 to
study constant mean curvature hypersurfaces in $\R^n$, Serrin was able
to show that, in a way, those are the only solutions to the
overdetermined problem on bounded domains. In other words, under mild assumptions it is
known that if \eqref{eq:onp_laplacian} admits a solution for a bounded domain
$\Omega$ of $\R^n$, then
$\Omega$ must be a Euclidean ball and the solution $u$ is radial. This
method works also for the hyperbolic space and
hemisphere~\cite{KP:duke} due to the existence of many totally
geodesic hypersurfaces, but fails for more 
general geometries of nonconstant curvature. Other important symmetry or classification results concerning (mainly unbounded) $G$-extremal domains in spaces of constant curvature can be found in~\cite{Brandolini,EFM:arxiv,EMao:arxiv,EMaz:arxiv,RRS:cpam,Traizet:gafa}. 

Existence or symmetry results for overdetermined problems in
Riemannian manifolds of nonconstant curvature are very
scarce. Nontrivial existence results of $G$-extremal domains in flat
spaces can be found in~\cite{PPW,RRS:jems,SS:advmath,S:cvpde}, and also for
some specific manifolds in~\cite{MS:esaim} and~\cite{EP:jmaa}. The
latter also provides a symmetry result on certain
manifolds. In fact, in general Riemannian manifolds, the very strong
requirement that constant Dirichlet and Neumann conditions be
satisfied simultaneously may even lead one to conjecture that such
solutions should not exist for generic metrics. However, in two
surprising papers, Pacard and Sicbaldi~\cite{PS:AIF} and Delay and
Sicbaldi~\cite{DS:DCDS} proved the existence of extremal domains with
small volume for the first eigenvalue $\lambda_1$ of the Laplacian in
any compact Riemannian manifold, so Problem~\eqref{eq:onp_laplacian}
admits solutions with $G(z):=z$ and $\lambda:=\lambda_1(\Om)$ for
any compact~$M$. A similar approach was followed by Fall and
Minlend~\cite{FM:ACV} to show the existence of solutions
to~\eqref{eq:onp_laplacian} when $G$ is constant.

Our first objective in this paper is to provide existence results for
the nonlinear overdetermined problem~\eqref{eq:onp} for a wide range
of nonlinearities~$f(p,z)$, possibly depending on the point $p$ in the
manifold. 
The solution domains $\Omega$ that we construct to the Problem~\eqref{eq:onp} are
perturbations of small geodesic balls centered at suitably chosen
points of the manifold. One can state the result as follows, where
$B_r$ denotes the ball in $\R^n$ of radius~$r$, $|B_r|$ is its volume,
and $f_z$, $f_{zz}$ denote
the first and second partial derivatives of $f(p,z)$ with respect to
$z$. Notice that in the hypotheses we do not impose any restrictions on the growth
of~$f(p,z)$ as $z\to\infty$ and that there are no global sign
conditions.

\begin{theorem}\label{th:main}
Let $M$ be a compact Riemannian manifold, and $f(p,z)$ any function that satisfies one of the
following two conditions:
\begin{enumerate}[{\rm (i)}]
\item $f\in C^{1,\al}\loc(M\times\RR)$ and $f(p,0)>0$ for all $p\in M$.
\item $f\in C^{2}\loc(M\times\R)$, $f(p,0)=0$,  $f_z(p,0)=c>0$ and $f_{zz}(p,0)\neq 0$ for all~$p\in M$, where $c$ is a constant independent of $p$.
\end{enumerate} 
Then for every small enough positive~$\ep$ there exists a domain $\Om\subset M$ with volume equal to $|B_\ep|$ and a positive constant $\la$, which is of order $\ep^{-2}$, such that the overdetermined problem~\eqref{eq:onp}
admits a positive solution. The domain $\Omega$ is a
$\C^{2,\alpha}$-small perturbation of a geodesic ball of
radius~$\ep$.
\end{theorem}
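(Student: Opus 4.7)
The plan is to construct $\Om$ as a small normal perturbation of a geodesic ball $B_\ep(p_0)$ about a point $p_0 \in M$ chosen at the end. In normal coordinates at~$p_0$, parametrize perturbed balls by functions $v \in C^{2,\al}(S^{n-1})$ with $\int_{S^{n-1}} v\,d\si = 0$ (volume-preserving to leading order) via
\[
\Om_\ep(p_0, v) := \bigl\{\exp_{p_0}\bigl(\ep(1+v(\theta))\,r\,\theta\bigr) : 0 \leq r < 1,\ \theta \in S^{n-1}\bigr\}.
\]
Rescaling $y = x/\ep$ and writing $\la = \La/\ep^2$ turns \eqref{eq:onp} into
\[
\De_{g_\ep} w + \La\,f(\exp_{p_0}(\ep y), w) = 0
\]
on the rescaled domain, a unit perturbed ball in $T_{p_0}M$ endowed with the almost-Euclidean metric $g_\ep$. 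This reduces the analysis to unit scale and explains the $\ep^{-2}$ order of~$\la$.

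The next step is to solve this rescaled Dirichlet problem for each small $(\ep, v)$ and each~$p_0$. At $\ep = 0$, $v = 0$ one is left with the limiting problem $\De_0 w_0 + \La\,f(p_0, w_0) = 0$ in $B_1$, $w_0|_{\pd B_1} = 0$. Under hypothesis~(i), the assumption $f(p_0, 0) > 0$ yields, for any sufficiently small $\La > 0$, a unique small positive radial solution $w_0$ by a direct IFT around $w = 0$; under hypothesis~(ii), the trivial solution bifurcates at $\La = \la_1(B_1)/c$ and the nondegeneracy $f_{zz}(\cdot, 0) \neq 0$ supplies, via Crandall--Rabinowitz, a transversal branch of positive radial solutions from which a nondegenerate $w_0$ can be selected. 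In either case the operator $-\De_0 - \La\,f_z(p_0, w_0)$ is invertible on $H^1_0(B_1)$, so a further IFT solves the Dirichlet problem on the perturbed rescaled domain for all small $(\ep, v)$ and produces $w = w(\ep, p_0, v) \in C^{2,\al}$.

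With $w$ in hand, encode the overdetermined condition by the map
\[
F(\ep, p_0, v) := \pd_\nu w\big|_{\pd\Om} - \mathrm{avg}\bigl(\pd_\nu w\big|_{\pd\Om}\bigr),
\]
pulled back to a mean-zero function in $C^{1,\al}(S^{n-1})$. A standard calculation identifies $D_v F(0, p_0, 0)$ as a self-adjoint elliptic pseudodifferential operator of order one on $S^{n-1}$, essentially a Dirichlet-to-Neumann map for the linearized problem, whose kernel on the mean-zero subspace is precisely the $n$-dimensional span of first spherical harmonics $\theta \mapsto \theta_j$; this is the familiar translation degeneracy of the Euclidean ball. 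A Lyapunov--Schmidt decomposition $v = v^\perp + \sum_j a_j\,\theta_j$ then lets one solve the infinite-dimensional component of $F = 0$ for $v^\perp$ via the IFT, leaving a finite-dimensional bifurcation equation $\Phi(\ep, p_0, a) = 0$ with $\Phi$ valued in~$\R^n$.

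To conclude I must cancel this residual obstruction by choosing~$p_0$. The key observation is that the position dependence of~$f$ contributes an $O(\ep)$ term to $\Phi(\ep, p_0, 0)$ which, to leading order, is the differential at~$p_0$ of a smooth scalar functional $J : M \to \R$ built from an integral over $B_1$ of $f(\exp_{p_0}(\ep\,\cdot\,), w_0)$; in particular $\Phi(\ep, p_0, 0) = \ep\,\nabla J(p_0) + O(\ep^2)$. Since $M$ is compact, $J$ attains its maximum at some $p_0^* \in M$, where $\nabla J(p_0^*) = 0$, and a quantitative IFT centered at $(0, p_0^*, 0)$ produces $(p_0(\ep), a(\ep))$ solving $\Phi = 0$, hence \eqref{eq:onp}, for all small $\ep > 0$. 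The main obstacle is this last step: precisely computing the leading $\ep$-expansion of $\Phi$, identifying it as a gradient on~$M$, and making the Lyapunov--Schmidt reduction sufficiently uniform in~$p_0$ that the IFT closes at a critical point of~$J$. That the Euclidean translation kernel is exactly $n$-dimensional, matching the $n$ parameters in~$p_0$, is what ultimately makes the scheme viable.
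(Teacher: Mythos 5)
Your Steps 1--2 follow the paper's strategy, but two points you treat as routine are in fact where the real work lies, and the final step as you set it up would fail. First, the claim that the linearized operator $D_vF(0,p_0,0)$ has kernel exactly the first spherical harmonics is not a ``standard calculation'' for a general nonlinearity: a priori the Dirichlet-to-Neumann-type operator attached to $\Delta+\La f_z(p_0,w_0)$ could be degenerate on higher modes $V_j$, $j\ge 2$. Ruling this out is precisely condition (iii) of the paper's Assumption~A, and proving it occupies Section~\ref{S.ass}: in case (i) it follows from a coercivity argument valid only for small $\La$, and in case (ii) one must track the Crandall--Rabinowitz branch, write the relevant operator as $s\,\widetilde H_{p,s}$, and pass to the limit $s\to0$ to reduce to the Pacard--Sicbaldi operator of \cite{PS:AIF}. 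Simply invoking the ``familiar translation degeneracy of the Euclidean ball'' leaves this unproved.

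Second, and more seriously, your closing argument does not close. After the Lyapunov--Schmidt reduction you propose to expand $\Phi(\ep,p_0,0)=\ep\,\nabla J(p_0)+O(\ep^2)$ with $J$ built from the position dependence of $f$, pick a maximum $p_0^*$ of $J$, and run a quantitative implicit function theorem at $(0,p_0^*,0)$. This fails for two reasons: (a) the IFT in the $p_0$-variable requires the Hessian of $J$ at $p_0^*$ to be invertible, and a maximum produced by compactness may be a degenerate critical point; and (b) when $f(p,z)=G(z)$ is position-independent (a case the theorem must cover) your leading term $\nabla J$ vanishes identically, so the expansion gives no solvable reduced equation at order $\ep$, and the $O(\ep^2)$ remainder is neither shown to be a gradient nor to vanish at $p_0^*$. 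The paper avoids expansions altogether: it proves an exact equivalence, valid at each fixed small $\ep$, between zeros of the obstruction vector $a_{\ep,p}$ and critical points of the $\ep$-dependent energy $\cal{J}_\ep(p)=\int_{\Omega_{\ep,p}}\bigl(\tfrac12|\nabla_{g_\ep}u_{\ep,p}|^2_{g_\ep}-F(\cdot,u_{\ep,p})\bigr)dV_{g_\ep}$ (Proposition~\ref{prop:critical_point}; the nontrivial direction uses a quantitative lower bound on the constant Neumann datum $b_{\ep,p}$ to force $a_{\ep,p}=0$). Then any critical point of $\cal{J}_\ep$ -- e.g.\ a maximum, which exists by compactness for every $\ep$ -- yields a solution, with no nondegeneracy assumption and no need to identify a limiting functional. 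To repair your argument you would either have to prove such an exact variational characterization, or replace the IFT by a degree/index argument together with a justified uniform expansion whose leading term does not vanish -- neither of which is available as stated.
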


The construction of solutions to~\eqref{eq:onp}
builds upon ideas of Pacard--Sicbaldi~\cite{PS:AIF} and
Delay--Sicbaldi~\cite{DS:DCDS} in the case of linear,
position-independent equations, and extends them to the case of
semilinear equations with position-dependent nonlinearities. The
strategy is as
follows. Firstly, we carry out an analysis of the nonlinearities
considered in the statement of Theorem~\ref{th:main} and, using some bifurcation arguments, we prove that they satisfy certain rather
nontrivial technical conditions that are crucially employed in several
steps of the demonstration. Next we show that a domain $\Omega$ that is a small
perturbation of a small geodesic ball in $M$ admits a positive
solution $u\in \C^{2,\alpha}(\Omega)$ to $\Delta_g u+\epsilon^{-2}
f(\cdot,u)=0$ in $\Omega$, $u\rvert_{\partial\Omega}=0$, where
$\epsilon>0$ is somehow related to the size of~$\Omega$. Then we
consider an operator $\cal{F}$ that measures how far the normal
derivative of $u$ along $\partial\Omega$ is from being constant. We
calculate the linearization of $\cal{F}$, and study some properties of
the linearized operator. This allows us to apply the implicit function
theorem in Banach spaces to $\cal{F}$ and prove that, for any $p\in M$
and for any small enough $\epsilon>0$, there exists a small
perturbation $\Omega_{\epsilon,p}$ of a geodesic ball centered at $p$
that admits a solution $u_{\epsilon,p}$ satisfying not only the
equation and the Dirichlet condition, but also the Neumann condition
up to, roughly speaking, a linear error. This linear error is
controlled by certain vector field $a_{\epsilon,p}$ on~$M$. Then we
consider, for each small enough $\epsilon>0$, a smooth map $\cal{J}_\epsilon\colon M\to\R$ that associates to each point $p$ the energy of $u_{\epsilon,p}$, that is, 
\[
\cal{J}_\epsilon (p):=\int_{\Omega_{\epsilon,p}}\left(\frac{1}{2}\left|
    \nabla_g
    u_{\epsilon,p}\right|^2_g-F(\cdot,u_{\epsilon,p})\right)\, dV_g,
\]
where $F(p,z):=\int^z_0f(p,\zeta)d\zeta$ and $dV_g$ is the Riemannian 
measure induced by~$g$. The critical points of the function
$\cal{J}_\epsilon$ are precisely the zeros of the vector field~ $a_{\epsilon,p}$. Finally, it is shown that $u_{\epsilon,p}$
satisfies the Neumann condition precisely if $p$ is a critical point
of $\cal{J}_\epsilon$. If $M$ is compact, then $\cal{J}_\epsilon$ has
a critical point; indeed, it has at least as many critical points as
the Lusternik--Shnirelmann category of~$M$~\cite{LS:ams}. Therefore,
for every small enough~$\ep>0$ we obtain domains $\Om$, with
volume $|B_\ep|$ and enclosing certain specific points $p\in M$,
 where the overdetermined problem with constant of order
$\epsilon^{-2}$ admits a nontrivial solution. The points of the manifold
at which one centers the solution domains turn out to be determined by the zeros of the 
vector field $a_{\epsilon,p}$.

Notice that, as the zeros of a vector field are ultimately involved,
it is not hard to believe that when the Euler characteristic of the
manifold is nonzero the result holds in greater generality. This idea
leads to the following slightly stronger result, which allows for the
presence of first order terms. We have chosen to state it in the Introduction because, for this reason, it
applies to problems that do not have a variational structure:

\begin{theorem}\label{th:Eulerchar}
Suppose that the Euler characteristic of the compact manifold~$M$ is
nonzero and let $X$ be a vector field on~$M$ of class $C^{0,\al}$. Then the statements of Theorem~\ref{th:main}
remain true for the more general equation
\[
\Delta_g u+\langle \nabla_g u, X\rangle+\lambda f(\cdot,u)=0 \qquad  \text{in } \Omega 
\]
with the same assumptions on the nonlinearity
$f$ and the same overdetermined boundary conditions:
\[
u=0\quad\text{and}\quad  \langle\nabla_g u,\nu\rangle=\text{constant}\qquad
\text{on }\pd\Om\,.
\]
\end{theorem}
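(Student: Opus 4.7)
The plan is to mimic the strategy outlined for Theorem~\ref{th:main}, with the Lusternik--Shnirelmann input replaced by the Poincar\'e--Hopf theorem. First I would fix a small $\ep>0$ and a point $p\in M$, parametrise $C^{2,\al}$-small perturbations $\Om_{\ep,p}$ of the geodesic ball of radius~$\ep$ centred at~$p$ by functions on the unit sphere via geodesic normal coordinates, and solve the Dirichlet problem
\[
\De_g u+\langle\nabla_g u,X\rangle+\ep^{-2}f(\cdot,u)=0 \text{ in }\Om_{\ep,p},\qquad u|_{\pd\Om_{\ep,p}}=0,
\]
by the implicit function theorem, exactly as in the proof of Theorem~\ref{th:main}. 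The transport term $\langle\nabla_g u,X\rangle$ is a lower-order, relatively compact perturbation of the Laplacian, so it does not affect either the Fredholm index or the invertibility of the relevant linearisation on the appropriate H\"older spaces; the technical hypotheses on $f$ recalled in~(i) and~(ii) should then still suffice to produce a family of positive solutions $u_{\ep,p}$ depending smoothly on~$p$ and on the perturbation parameter.

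Next I would encode the failure of the Neumann condition by a vector field $a_{\ep,p}$ on~$M$. In the proof of Theorem~\ref{th:main}, this $a_{\ep,p}$ arises (up to sign) as the gradient of the reduced energy $\cal{J}_\ep(p)$ and its zeros are identified with the critical points of $\cal{J}_\ep$; when $X\ne 0$ the problem is no longer of gradient type and this shortcut is unavailable, but $a_{\ep,p}$ can still be defined directly. One projects the boundary deviation $\langle\nabla_g u_{\ep,p},\nu_g\rangle-\mathrm{constant}$ onto the finite-dimensional kernel of the linearised Dirichlet-to-Neumann operator on $\pd\Om_{\ep,p}$, and reads off its coefficients against the canonical basis of first spherical harmonics in the normal coordinates at~$p$. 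A direct check shows that these coefficients transform correctly under changes of frame, so they define the components of a genuine tangent vector $a_{\ep,p}\in T_pM$; smooth dependence in the implicit function theorem then makes $p\mapsto a_{\ep,p}$ a continuous vector field on~$M$, with the property that $u_{\ep,p}$ solves the full overdetermined problem precisely when $a_{\ep,p}=0$.

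To conclude, since $\chi(M)\ne 0$ the Poincar\'e--Hopf theorem guarantees that every continuous vector field on~$M$ vanishes at some point, so $a_{\ep,p}$ has at least one zero $p_\ep\in M$ for each small enough~$\ep$. The pair $(\Om_{\ep,p_\ep},u_{\ep,p_\ep})$ then satisfies all the overdetermined conditions with the added transport term, $\la=\ep^{-2}$, and $\vol(\Om_{\ep,p_\ep})=|B_\ep|$. The main obstacle I anticipate is the careful verification that the linear analysis from the proof of Theorem~\ref{th:main} carries over in the presence of the non-self-adjoint first-order term: one has to check that the linearised Dirichlet operator remains an isomorphism on the chosen H\"older spaces, that the bifurcation arguments yielding the technical conditions on~$f$ still go through with a non-self-adjoint principal part, and that the assumed regularity $C^{0,\al}$ of~$X$ is enough for the Schauder-type estimates invoked throughout. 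Of these, showing that $a_{\ep,p}$ is globally well-defined and continuous as a vector field on~$M$ (rather than only in a chart at~$p$) seems the most delicate step, though it should not require essentially new ideas beyond those already used in Theorem~\ref{th:main}.
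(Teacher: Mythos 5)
Your proposal is correct and follows essentially the same route as the paper: Steps 1--2 of the construction (solvability of the Dirichlet problem on perturbed geodesic balls and the reduction of the Neumann defect to a $C^{1,\al}$ vector field $a_\ep$ via projection onto the first spherical harmonics $V_1$, with the center-of-mass normalization) carry over unchanged, and since the variational Step 3 is unavailable, the zeros of $a_\ep$ are obtained exactly as you say from $\chi(M)\neq 0$ and the Poincar\'e--Hopf theorem. Your anticipated difficulty about the non-self-adjoint first-order term is in fact harmless: after the $\ep^{-2}$ rescaling the transport term becomes an $O(\ep)$ perturbation, and Assumption~A concerns only the frozen $\ep=0$ problem, which does not involve $X$ --- precisely the observation the paper makes.
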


It is worth stressing that Theorem~\ref{th:main} provides new
existence results
even for overdetermined problems with the flat Laplacian on~$\RR^n$;
in particular, it automatically
ensures the existence of a positive solution for the overdetermined
problem
\begin{equation*}%\label{eq:onpR}
\left\{
\begin{array}{rcll}
\Delta u+\lambda f(\cdot,u)&=&0 & \text{in } \Omega 
\\
u&=&0 & \text{on } \partial\Omega
\\
\partial_\nu u &=& \text{constant}& \text{on } \partial\Omega\,,
\end{array}
\right.
\end{equation*}
where $\Om\subset\R^n$ and~$\De$ denotes the standard Laplacian, for any nonlinearity~$f(x,z)$ that is
periodic in~$x$ and satisfies Condition~(i) or~(ii) in Theorem~\ref{th:main}. This is clear because one
can reformulate the problem on a flat $n$-dimensional torus, which
falls within the scope of Theorem~\ref{th:main}.

Perhaps more interestingly, one can obtain existence results on~$\R^n$ even for
nonlinearities that are non-periodic in~$x$. This is because the method of proof of Theorem~\ref{th:main} can be extended to manifolds that
are not necessarily compact but have some asymptotic symmetry
properties that one could call ``asymptotic homogeneity''. Details
are provided in Section~\ref{sec:homogeneity}. To illustrate this fact
we will next present a
very particular instance of Theorem~\ref{th:asympt_types}, which is stated and proved in Section~\ref{sec:homogeneity}:

\begin{theorem}\label{T.asympt}
Assume
that one of the following two conditions holds:
\begin{enumerate}[{\rm (i)}]
\item $f(x,z)\in C^{1,\al}\loc(\RR^n\times\RR)$ converges to $G(z)\in
  C^{1,\al}\loc(\RR)$ locally uniformly in $C^{1,\al}$ as $|x|\to\infty$ and $\displaystyle\inf_{x\in\R^n} f(x,0)>0$.
\item $f(x,z)\in C^{2}\loc(\R^n\times\R)$ converges to $G(z)\in C^2\loc(\R)$
  locally uniformly in $C^2$ as $|x|\to\infty$, $f(x,0)=0$,  $f_z(x,0)=c$ and
  $|f_{zz}(x,0)|> c'$ for all~$x\in
  \R^n$, where $c,c'$ are positive constants independent of $x$.
\end{enumerate}
Then for every small enough~$\ep$
there exists a domain $\Om\subset \RR^n$, which is a $C^{2,\al}$-small deformation of a ball of radius~$\ep$ with the same volume, and a positive constant $\la$ of order $\ep^{-2}$ such that the overdetermined problem
\begin{equation*}%\label{eq:onpR}
\left\{
\begin{array}{rcll}
\Delta u+\lambda f( \cdot,u)&=&0 & \text{in } \Omega 
\\
u&=&0 & \text{on } \partial\Omega
\\
\partial_\nu u &=& \text{constant}& \text{on } \partial\Omega
\end{array}
\right.
\end{equation*}
admits a positive solution.
\end{theorem}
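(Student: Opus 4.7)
The plan is to adapt the strategy sketched in the Introduction for Theorem~\ref{th:main} to the non-compact setting of~$\RR^n$, exploiting the asymptotic translation invariance of~$f$. First I would run the local construction verbatim: for every $p\in\RR^n$ and every small enough~$\ep>0$, I would produce a domain $\Om_{\ep,p}\subset\RR^n$ that is a $C^{2,\al}$-small perturbation of the Euclidean ball of radius~$\ep$ centered at~$p$, together with a positive solution $u_{\ep,p}$ of the Dirichlet problem
\[
\Delta u_{\ep,p}+\ep^{-2}f(\cdot,u_{\ep,p})=0\ \text{ in }\Om_{\ep,p},\qquad u_{\ep,p}=0\ \text{ on }\pd\Om_{\ep,p},
\]
chosen so that the Neumann trace of $u_{\ep,p}$ on $\pd\Om_{\ep,p}$ fails to be constant only through a linear error encoded by the vector field $a_{\ep,p}$ at~$p$. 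Since the construction proceeds via an implicit function theorem in a small neighborhood of~$p$, it extends without change to~$\RR^n$ provided the analytic estimates on $f$ are uniform in~$p$; the hypotheses in~(i) and~(ii), combined with the locally uniform convergence $f\to G$, deliver exactly this uniformity. The Pacard--Sicbaldi reduction then identifies zeros of $a_{\ep,p}$ with critical points of the energy
\[
\cal{J}_\ep(p):=\int_{\Om_{\ep,p}}\Bigl(\tfrac12|\nabla u_{\ep,p}|^2-F(\cdot,u_{\ep,p})\Bigr)dx,\qquad F(x,z):=\int_0^z f(x,\zeta)\,d\zeta,
\]
each of which produces the desired overdetermined solution.

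The new ingredient, replacing the appeal to Lusternik--Shnirelmann category, is the behavior of $\cal{J}_\ep$ at infinity. Since $f(x+p,z)\to G(z)$ locally uniformly in $C^{1,\al}$ (respectively $C^{2}$) as $|p|\to\infty$, the continuous dependence of the pair $(\Om_{\ep,p},u_{\ep,p})$ on the coefficients, which is built into the implicit function theorem, yields
\[
\lim_{|p|\to\infty}\cal{J}_\ep(p)=\cal{J}_\ep^\infty,
\]
where $\cal{J}_\ep^\infty$ is the energy of the radial solution of the overdetermined problem on the ball $B_\ep$ for the translation-invariant nonlinearity~$G$. Consequently, $\cal{J}_\ep$ extends continuously to the one-point compactification $\RR^n\cup\{\infty\}\cong S^n$. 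Either $\cal{J}_\ep\equiv\cal{J}_\ep^\infty$, in which case every $p\in\RR^n$ is a critical point and the theorem holds trivially; or there is some $p_\ast\in\RR^n$ with $\cal{J}_\ep(p_\ast)\neq\cal{J}_\ep^\infty$, so that, by continuity and compactness, $\cal{J}_\ep$ attains its maximum or its minimum at an interior point of $\RR^n$, which is the critical point we need.

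The main obstacle is verifying that every analytic ingredient from the proof of Theorem~\ref{th:main}, including the spectral bounds on the linearized Dirichlet-to-Neumann operator, the non-degeneracy conditions extracted from the bifurcation analysis of the nonlinearity, and the Schauder estimates for $u_{\ep,p}$, holds with constants independent of~$p\in\RR^n$. In case~(i), the hypothesis $\inf_{x}f(x,0)>0$ together with the locally uniform $C^{1,\al}$ convergence to~$G$ provides this uniform control; in case~(ii) the same role is played by the fact that $f_z(x,0)\equiv c$ is a positive constant and $|f_{zz}(x,0)|>c'$ is bounded away from zero, so that the bifurcation analysis that supplies the technical conditions required by Theorem~\ref{th:main} runs with estimates independent of~$p$. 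Once this uniformity has been established, the continuous dependence giving the limit of $\cal{J}_\ep$ at infinity follows at once, and the theorem is reduced to the elementary extremum argument above.
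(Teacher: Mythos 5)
Your proposal is correct and follows essentially the same route as the paper: Theorem~\ref{T.asympt} is proved there as the special case $(M,g)=(\R^n,g_0)$ of Theorem~\ref{th:asympt_types}, whose proof runs the implicit function theorem with the metric and the nonlinearity as parameters near $(g_0,G)$ — the limit nonlinearity $G$, which satisfies Assumption~A by Theorems~\ref{T.ex1} and~\ref{T.ex2}, supplying exactly the uniformity in $p$ you invoke — and then uses your argument that $\cal{J}_\ep$ is asymptotically constant, hence either constant or attaining an interior extremum, to get a critical point. The only difference is packaging: the paper isolates the uniformity in Assumption~B and works on general asymptotically homogeneous manifolds, whereas you specialize directly to $\R^n$.
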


When the nonlinearity $f(p,z)=G(z)$ is independent of~$p$, the arguments involved in the proof of Theorem~\ref{th:main}
yield as a byproduct a uniqueness result that can be used to prove
the symmetry of certain $G$-extremal domains.
With the exception of~\cite{EP:jmaa}, the existing symmetry results for
overdetermined problems only apply when~$M$ is a space of constant
curvature, that is, a Euclidean space, a hyperbolic space, a round
sphere, or a quotient thereof. The intuitive reason of this is that
these symmetry results are typically obtained using the moving plane
method, so in a certain sense they hinge on the idea of using
isometries (that is, rigid motions) to transport planes. A plane is
determined by three points, and the key property of spaces of constant
curvature that is ultimately employed is that they are the only
three-point homogeneous spaces, which means that given two triples of points $(p_1,p_2,p_3)$ and $(p_1',p_2',p_3')$ on the manifold
with the same relative distances (i.e., $d(p_i,p_j)=d(p_i',p_j')$), then
there is an isometry~$\varphi$ such that $\varphi(p_j)=p_j'$.

Our symmetry results apply to Riemannian manifolds that also have a
large isometry group, but not large enough to be three-point
homogeneous, which means that the moving plane method will not work. Specifically, we prove symmetry results in harmonic
  spaces, which are defined as those Riemannian manifolds whose
geodesic spheres of sufficiently small radius have constant mean
curvature. Examples of harmonic spaces are the isotropic homogeneous
spaces (also referred to as two-point homogeneous spaces), such as the
projective and hyperbolic spaces over the distinct division algebras
(that is, the complex numbers, the quaternions and the octonions). We refer to Section~\ref{sec:symmetry} for more information on these spaces. As
we will see, the price to pay for this greater generality is that one
cannot prove the symmetry of any $G$-extremal domain, but only of those that are
close enough to being a small geodesic ball. Therefore, this is a
rigidity result for domains close to small geodesic spheres where the
overdetermined problem admits a nontrivial solution.

In order to state this kind of results, it is convenient to introduce
some notation. First, given a closed hypersurface $\Sigma$ in $M$, we define its \emph{center of mass} as the minimum of the function
\[
p\in M\mapsto\frac{1}{2}\int_{\Sigma}d^2(p,q)d\sigma_g,
\]
whenever it exists and is unique, where $d$ is the Riemannian distance
on $M$ and $d\sigma_g$ is the induced Riemannian volume form
on $\Sigma$.  With a slight abuse of notation, we will say that a
domain $\Om$ is \emph{centered at $p$} if the center of mass of its
border $\partial \Om$ is~$p$. We refer e.g.\ to~\cite{Ka:cpam,Na:agag}
for details. Second, given a point $p\in M$ and a continuous positive
function $h$ defined on the unit tangent sphere $T_p^1M$ at $p$ that
is not too large in $L^\infty$-norm (specifically, smaller than the
injectivity radius at~$p$), we find it convenient to introduce the
notation
\begin{equation}\label{ball}
B_h^g(p):=\{\exp^g_p(x):x\in T_p M, \, 0\leq | x | <h(x/| x |)\}\,,
\end{equation}
where $\exp^g$ denotes the exponential map of $(M,g)$.
It is clear that if $\Om$ is a domain that is close enough to a
geodesic ball centered at~$p$, in a sense that can be made
precise easily, then there is some function~$h$ as above such that $\Om=B_h^g(p)$.

We are now ready to state an important special case of our symmetry
results, which asserts that any $G$-extremal domain in a harmonic
space that is close enough to a small geodesic ball is indeed a
geodesic ball if the nonlinearity is, roughly speaking, concave and sublinear:

\begin{theorem}\label{th:harmonic_intro}
	Let $M$ be a harmonic space of dimension $n\geq 2$, and let
        $G\in\C_{\mathrm{loc}}^{1,\alpha}(\R)$ be concave, with
        $G(0)>0$, and such that either $G(z_0)=0$ for some~$z_0>0$ or $\lim_{z\to +\infty}\frac{G(z)}z=0$.
	 Suppose that $\Omega=B^g_{\epsilon(1+v)}(p)$ is a $G$-extremal
         domain centered at a certain point~$p$, i.e., that there is a
         nontrivial solution to the overdetermined
         problem~\eqref{eq:onp_laplacian} in this domain.
  If $\ep$, $\la\ep^2$
  and~$\|v\|_{\C^{2,\alpha}(T_p^1M)}$ are small enough, then
  $\Omega$ is a geodesic ball and $u$ only depends on the
  geodesic distance to~$p$.
\end{theorem}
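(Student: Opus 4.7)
The plan is to combine the implicit function theorem machinery behind Theorem~\ref{th:main} with the special symmetry of harmonic spaces to obtain a rigidity statement. In a harmonic space every sufficiently small geodesic ball should be $G$-extremal for free, so the theorem is really the uniqueness of this configuration under the centering and smallness hypotheses.

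First I would construct the radial reference solution. Let $r=d(p,\cdot)$ and look for $u(q)=U(r)$. Because $M$ is harmonic, the mean curvature $H(r)$ of $\pd B_r(p)$ depends only on~$r$, so $\De_g u=U''(r)+H(r)U'(r)$ and the PDE reduces to a scalar ODE on $[0,\ep]$ with $U'(0)=0$, $U(\ep)=0$. The hypotheses on $G$ (positivity at~$0$, concavity, and either a positive zero or sublinearity at infinity) are exactly what a shooting argument needs to produce, for every small~$\ep$, a positive $U_\ep$ and a unique $\la=\la(\ep)$ of order $\ep^{-2}$. Since $U_\ep$ is radial and $\pd B_\ep(p)$ has constant mean curvature by harmonicity, the Neumann datum $U_\ep'(\ep)$ is automatically constant along the boundary, so $(B_\ep(p),U_\ep)$ already solves~\eqref{eq:onp_laplacian}. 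Hence $v\equiv 0$ realizes a $G$-extremal domain centered at~$p$, and the theorem reduces to showing it is the only one nearby.

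Next I would bring in the implicit function argument. Parameterize centered deformations by $v\in\C^{2,\al}(T_p^1M)$, set $\Om_v:=B^g_{\ep(1+v)}(p)$, let $u_v$ solve the Dirichlet problem $\De_g u_v+\la G(u_v)=0$ on $\Om_v$, and define $\cal F(\ep,v):=\langle\nabla_g u_v,\nu_g\rangle\circ\Psi_v-c_v$, where $\Psi_v\colon T_p^1M\to\pd\Om_v$ is the radial chart and $c_v$ is the average of the normal derivative. The linearization $D_v\cal F(\ep,0)$ used in the proof of Theorem~\ref{th:main} can now be evaluated at the genuine solution $v\equiv 0$; by rotational symmetry of the radial background it diagonalizes in the basis of spherical harmonics $Y_k$ on $T_p^1M$, with eigenvalues $\mu_k(\ep)$ determined by a radial Sturm--Liouville problem built from $\De_g+\la G'(U_\ep)$. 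The $k=0$ mode is removed by the mean-zero normalization of $\cal F$, the $k=1$ modes span infinitesimal displacements of the center of mass and are killed by the centering hypothesis on~$\Om$, and the main obstacle is to show $\mu_k(\ep)\neq 0$ for all $k\geq 2$. This is where the structural conditions on $G$ enter essentially: the tangent-line inequality $G(u)\geq G(0)+uG'(u)$ for concave $G$ combined with $G(0)>0$ yields $(-\De_g-\la G'(U_\ep))U_\ep>0$ pointwise on $B_\ep(p)$, so a Barta-type argument forces the first Dirichlet eigenvalue of the linearized radial operator to be strictly positive, and together with $\la\ep^2\ll 1$ this propagates to invertibility on every mode $k\geq 2$. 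The implicit function theorem then delivers a unique small~$v$ transversal to translations with $\cal F(\ep,v)=0$, necessarily $v\equiv 0$, so $\Om=B_\ep(p)$ and $u=U_\ep$ as claimed.
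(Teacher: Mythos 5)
Your overall strategy is close in spirit to the paper's: build a radial reference solution on the geodesic ball (exploiting that the volume density of a harmonic space is radial, so the Dirichlet problem reduces to an ODE and the Neumann datum is automatically constant), then invoke an implicit-function-theorem uniqueness argument to pin down nearby overdetermined solutions. The Barta-inequality route to nondegeneracy is a reasonable substitute for the coercivity argument of Theorem~\ref{T.ex1}, although the jump from ``first Dirichlet eigenvalue of $-\De_g-\la G'(U_\ep)$ is positive'' to ``$\mu_k\neq 0$ for all $k\geq 2$'' is not automatic: in the paper this is the separate ODE argument on the modal solutions $b_{p,j}$, which you wave past with ``propagates.''

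The genuine gap is at the very end, where you conclude $v\equiv 0$ from the implicit function theorem. The IFT gives only \emph{local} uniqueness: among pairs $(v,u_v)$ in a small neighborhood of $(0,U_\ep)$, the unique zero of $\cal F$ is $v=0$. The hypotheses of the theorem control $\|v\|_{\C^{2,\al}}$, but say nothing directly about the distance from the given solution $u$ to the reference radial solution $U_\ep$. Your argument tacitly writes ``let $u_v$ solve the Dirichlet problem'' as if that solution were unique and therefore forced to agree with the one the IFT tracks, but you never justify this. This is precisely what the structural hypotheses on $G$ are for, and what the paper supplies via Remark~\ref{rem:unique_phi}: concavity together with $G(0)>0$ and either a positive zero or sublinearity at infinity guarantee, by Lions' uniqueness theorem for positive solutions, that the Dirichlet problem on each nearby domain has a unique positive solution, hence the given $u$ automatically lies close to $U_\ep$ and the IFT uniqueness applies. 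You use concavity and $G(0)>0$ only for the Barta inequality, and you never use the sublinearity-or-zero hypothesis at all — which is a telling sign that the Lions step is missing. Without it, the uniqueness conclusion is not established.
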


This result applies, for instance, to the model overdetermined
problem $G(z)=1$ on a harmonic space, showing that any domain close to a small
enough geodesic ball where the corresponding overdetermined problem
admits a solution is exactly a geodesic ball. The same symmetry result
turns out to hold for the extremal domains associated with the first
eigenvalue problem, that is, Problem~\eqref{eq:onp_laplacian} with $\la\, G(z)=\la_1(\Om)z$ (see Remark~\ref{R.la1}).

This paper is organized as follows. In Section~\ref{S.ass} we state
a technical hypothesis that we call Assumption~A and give sufficient conditions ensuring
that it is satisfied, which correspond to conditions~(i)--(ii)
in Theorem~\ref{th:main}. In Section~\ref{sec:setup} we prove
Theorem~\ref{th:main} using that the nonlinearities we consider
satisfy Assumption~A. This involves solving the Dirichlet problem for small
perturbations of small geodesic balls and studying the operator that
encodes the failure of the Neumann condition. Then, we use a geometric variational approach
to show that the solutions to~\eqref{eq:onp} correspond to the
critical points of the functions $\cal{J}_\epsilon$, which are
analyzed subsequently. In Section~\ref{S.variations} we consider
variations on the proof of Theorem~\ref{th:main}, proving, in
particular, Theorem~\ref{th:Eulerchar}. In Section~\ref{sec:homogeneity} we discuss
the possibility of relaxing the compactness assumption on~$M$ and show
the validity of our existence results under different approximate homogeneity
hypotheses, establishing Theorem~\ref{T.asympt} as a particular case. Finally, Section~\ref{sec:symmetry} is devoted to 
several symmetry results for $G$-extremal domains in specific
Riemannian manifolds, such as two-point homogeneous, symmetric or
harmonic spaces, including Theorem~\ref{th:harmonic_intro}.

\section{Analysis of the nonlinearities considered in Theorem~\ref{th:main}}
\label{S.ass}

In this section we show that the nonlinearities considered in
Theorem~\ref{th:main} satisfy certain technical properties that will
be extensively used in the paper and which we call
Assumption~A. To state the assumptions, we will henceforth use the notations
$\nabla$ and $\Delta$ for the Euclidean gradient and Laplacian,
respectively. Throughout we are assuming that the function $f$ is
of class $C^{1,\al}\loc(M\times\RR)$.

Roughly speaking, Assumption~A asserts that, when the point on the manifold is
``frozen'', the nonlinearity is such that there are positive solutions
to the associated Dirichlet problem in the unit ball
of~$\RR^n$, and that a linear partial differential operator and countably
many linear ODEs defined in terms of this function satisfy certain
nondegeneracy conditions. Specifically, Assumption~A can be stated as follows: %, where we note that the constant~$\bar{\lambda}$ below is not the same as in Equation~\eqref{eq:onp}:

\begin{assumption}	%\label{co:a_j}
For each point $p\in M$ there exists a radial
        function $\phi_p(x)$ of class $C^{2,\al}$ on the unit ball,
        varying differentiably with~$p$, and a constant $\bar{\lambda}>0$
        independent of~$p$ such
        that:
\begin{enumerate}
\item $\phi_p(x)$ is positive in~$B_1$ and solves the Dirichlet problem
\begin{equation*}%\label{eq:onpB_1}
		\left\{
		\begin{array}{rcll}
		\Delta_x \phi_p(x)+\bar{\lambda}\, f(p,\phi_p(x))&=&0 & \text{in } B_1\,,
		\\
		\phi_p&=&0 & \text{on } \partial B_1,
		\end{array}
		\right.
		\end{equation*}
with $\partial_\nu \phi_p\rvert_{\partial B_1}<0$.

\item The linear operator $\Delta+\bar{\lambda} f_z(p,\phi_p)\colon \C^{2,\alpha}_{\mathrm{Dir}}(B_1)\to \C^{0,\al}(B_1)$ is invertible, where $\C^{2,\alpha}_{\mathrm{Dir}}(B_1)$ denotes the subspace of functions of $C^{2,\al}(B_1)$ that vanish on $\partial B_1$.

\item For each nonnegative integer~$j$, let $a_{p,j}(r)$ be the only solution to the equation
\begin{multline}\label{eq:a_j}
			a_{p,j}''(r)+\frac{n-1}{r}a_{p,j}'(r)+\bigg(\bar{\lambda} \,
                        f_z(p,\phi_p(r))-\frac{j(j+n-2)}{r^2}\bigg)\, a_{p,j}(r)\\
=\frac{n-1-j(j+n-2)}{r^2}\partial_r\phi_p(r)% \quad  \text{with } a_{p,j}(1)=0 \text{ and } a_{p,j}'(0)=0.
			\end{multline}
                        with initial conditions $a_{p,j}(1)=a_{p,j}'(1)=0$.
                        Here, since $\phi_p(x)$ is a radial function, we are
                        writing $\phi_p(r)$ with the obvious
                        meaning. Then
\[
\lim_{r\to0^+} a_{p,j}(r)\neq0
\]
for all $j\geq2$.
\end{enumerate}
\end{assumption}

\begin{remark}\label{R.assA}
  The characteristic exponents of the ODE~\eqref{eq:a_j} at~0 are $-j$
  and $n+j-2$, so the meaning of condition~(iii) is that, for all
  $j\geq2$ and $p\in M$, $r=1$ is not a double zero of the solution to
  the only solution to the ODE~\eqref{eq:a_j} that is continuous
  at~$r=0$.
\end{remark}

\begin{remark}\label{rem:bar_lambda}
	Using the constant~$\la$ that appears in the
	overdetermined problem~\eqref{eq:onp},  without any loss of generality we take the constant $\bar{\lambda}$ appearing in Assumption A to be
	$\bar{\lambda}=1$, after multiplying the function~$f$ by a constant if
	necessary. We will do this without any further mention when we use
	Assumption~A in subsequent sections.
\end{remark}

In the following theorem we provide a wide class of nonlinearities
that satisfy Assumption~A, corresponding to the first class of
examples provided in the Introduction:

\begin{theorem}\label{T.ex1}
Suppose that $M$ is compact and $f(p,0)>0$ for all $p\in M$. Then Assumption~A holds for all small enough $\bar{\lambda}>0$.
\end{theorem}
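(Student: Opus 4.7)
The plan is to verify the three parts of Assumption~A by perturbation about the trivial situation at $\bar{\lambda}=0$.

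First I would construct $\phi_p$ via the implicit function theorem applied to the map $G(\phi, \bar{\lambda}; p) := \Delta \phi + \bar{\lambda} f(p, \phi)$ at $(\phi, \bar{\lambda}) = (0, 0)$: the partial derivative $\partial_\phi G|_{(0,0)} = \Delta \colon C^{2,\alpha}\Dir(B_1) \to C^{0,\alpha}(B_1)$ is an isomorphism, so for small $\bar{\lambda}$ one obtains a unique small solution $\phi_p$ depending $C^{1}$ on $(\bar{\lambda},p)$. Uniqueness combined with the rotational symmetry of the equation forces $\phi_p$ to be radial. An expansion $\phi_p = \bar{\lambda}\,\tilde{\phi}_p + O(\bar{\lambda}^2)$ yields the explicit leading-order radial profile $\tilde{\phi}_p(r) = \frac{f(p,0)}{2n}(1-r^2)$, and condition~(i) — that $\phi_p > 0$ in $B_1$ and $\partial_\nu\phi_p|_{\partial B_1} < 0$ — follows uniformly in $p\in M$ from the hypothesis $f(p,0)>0$ together with compactness of $M$. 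Condition~(ii) is then a Neumann-series argument: $\Delta + \bar{\lambda} f_z(p, \phi_p)$ is a bounded perturbation of $\Delta$ by a multiplication operator of $C^{0,\alpha}$-norm $O(\bar{\lambda})$, so it inverts on $C^{2,\alpha}\Dir(B_1)$ for $\bar{\lambda}$ small, uniformly in $p$.

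The substantive step is condition~(iii). I would rescale $a_{p,j} = \bar{\lambda}\,b_{p,j}$ and, using $\partial_r\phi_p(r) = -\frac{\bar{\lambda} f(p,0)}{n}\,r + O(\bar{\lambda}^2)$, pass to a well-defined limit as $\bar{\lambda}\to 0$, obtaining an inhomogeneous ODE for $b_{p,j}^{(0)}$ whose homogeneous part has the two fundamental solutions $r^j$ and $r^{-(j+n-2)}$ (i.e.\ the two characteristic exponents at $r=0$). A particular solution proportional to $r$ is found immediately, and imposing the initial conditions at $r=1$ reduces to a $2\times 2$ linear system that produces an explicit coefficient
\[
c_2^{(0)}(p, j) \;=\; \frac{(j-1)\,f(p,0)}{n(2j+n-2)}
\]
in front of the singular mode $r^{-(j+n-2)}$. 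This is strictly positive for every $j\geq 2$ and every $p\in M$, so $b_{p,j}^{(0)}(r)\to +\infty$ as $r\to 0^+$; by continuous dependence on $\bar{\lambda}$, the same behaviour persists for $a_{p,j}$ once $\bar{\lambda}$ is sufficiently small, which is precisely the non-degeneracy required in the sense of Remark~\ref{R.assA}.

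The main obstacle is producing a single threshold $\bar{\lambda}_0>0$ that works for all $p\in M$ and all $j\geq 2$ at once. Uniformity in $p$ is immediate from compactness of $M$ and continuous dependence. Uniformity in $j$ is more delicate because there are infinitely many ODEs: although the limiting coefficient $c_2^{(0)}(p,j)$ is bounded away from zero uniformly in $j\geq 2$ (it is monotone in $j$ between $f(p,0)/[n(n+2)]$ and $f(p,0)/(2n)$), the $O(\bar{\lambda})$ correction has $j$-dependent coefficients. The favourable observation is that the operator $\partial_r^2 + \frac{n-1}{r}\partial_r - \frac{j(j+n-2)}{r^2}$ becomes increasingly coercive as $j$ grows, so a uniform Green's-function estimate controls the $\bar{\lambda}$-correction to the singular coefficient uniformly in $j$, closing the argument.
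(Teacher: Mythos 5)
Your treatment of conditions (i) and (ii) matches the paper's: the implicit function theorem at $\bar{\lambda}=0$ to produce $\phi_p$, the maximum principle together with Hopf's lemma for positivity and the sign of the normal derivative, and a smallness-of-perturbation argument (Neumann series for you, coercivity with $\bar{\lambda}\|f_z\|_{L^\infty}<\lambda_1(B_1)$ for the paper) for invertibility. Where you diverge is condition~(iii), and the divergence is genuine. You pass to the leading order in $\bar{\lambda}$, solve the limiting ODE explicitly with $\tilde\phi_p=\frac{f(p,0)}{2n}(1-r^2)$, and compute the coefficient $B^{(0)}=\frac{(j-1)f(p,0)}{n(2j+n-2)}$ of the singular mode $r^{-(j+n-2)}$ — all of which checks out. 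The paper instead stays at finite $\bar{\lambda}$ and runs a maximum-principle/energy argument: using the coercivity inequality for the operator $\Delta+\bar{\lambda}f_z(p,\phi_p)$ it shows that the regular solution $b_{p,j}$ with $b_{p,j}'(0)=0$, $b_{p,j}(1)=0$ satisfies $b_{p,j}\leq 0$, and then derives a contradiction from $b_{p,j}'(1)=0$ by evaluating the ODE at $r=1$ and using $n-1-j(j+n-2)<0$ together with $\phi'_{p,\bar\lambda}(1)<0$. The decisive advantage of the paper's route is that uniformity in $j$ is free: neither the coercivity inequality nor the boundary evaluation degrades as $j\to\infty$ (the centrifugal term only helps). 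Your route requires, as you note yourself, a separate estimate that the $O(\bar{\lambda})$ correction to the singular coefficient (equivalently, to $\tilde b'_{p,j}(1)$) is controlled uniformly in $j$; your heuristic that the operator becomes more coercive at high $j$ points in the right direction, but it is only a sketch, and extracting the singular coefficient from the perturbed ODE uniformly in $j$ is exactly the delicate point the paper's sign argument is designed to avoid. What your approach buys in exchange is an explicit leading-order formula, which makes visible why $j=1$ is in the kernel ($B^{(0)}(1)=0$) and gives a quantitative lower bound $\geq f(p,0)/[n(n+2)]$ for $j\geq 2$. In short: correct strategy, different and more computational than the paper's for (iii), with a uniformity step that needs to be fleshed out rather than asserted.
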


\begin{proof}
We start with the first assertion of Assumption~A. For this, consider the
map
\[
\cal{G}\colon \RR\times C^{2,\al}\Dir(B_1)\to C^\al(B_1)
\]
given by
\[
\cal{G}(\bar{\lambda},\phi):= \De\phi+\bar{\lambda} \, f(p,\phi)\,,
\]
where $p$ is any fixed point on~$M$. Since $\cal{G}(0,0)=0$ and the partial derivative
\[
D_\phi\cal{G}(0,0)=\De
\]
is invertible with inverse $C^\al(B_1)\to C^{2,\al}\Dir(B_1)$, it
follows from the implicit function theorem that for all~$\bar{\lambda}$ in a small interval centered at~0 there is
a unique function~$\phi_{p,\bar{\lambda}}$ in a small neighborhood of~$0$ in
$C^{2,\al}\Dir(B_1)$ that satisfies the equation
$\cal{G}(\bar{\lambda}, \phi_{p,\bar{\lambda}})=0$. Furthermore it depends smoothly on $p\in M$ and $\bar{\lambda}\in \R$, and it is bounded as
\begin{equation}\label{boundphi}
\|\phi_{p,\bar{\lambda}}\|_{C^{2,\al}(B_1)}\leq C|\bar{\lambda}|\,.
\end{equation}

Take now a small enough positive $\La_1\in\RR$. In view of the
bound~\eqref{boundphi} one can now assume that
\begin{equation}\label{boundphi2}
\|\phi_{p,\bar{\lambda}}\|_{L^\infty}<1
\end{equation}
for all $\bar{\lambda}<\La_1$, so the corresponding function
$\phi_{p,\bar{\lambda}}$ then satisfies
\begin{align*}
-\De \phi_{p,\bar{\lambda}}&=\bar{\lambda}\, f(p, \phi_{p,\bar{\lambda}})\\
&\geq \bar{\lambda}\, \big( f(p,0) -
  \|f_z(p,\cdot)\|_{L^\infty((0,1))}\,
  \|\phi_{p,\bar{\lambda}}\|_{L^\infty}\big)\\
&\geq \bar{\lambda} \,\big( f(p,0)-C'\,\bar{\lambda}\big) >0
\end{align*}
for some constant $C'>0$, provided that $\bar{\lambda}$ is smaller than some positive constant~$\La_1'$. Since $\phi_{p,\bar{\lambda}}|_{\pd B_1}=0$, the maximum
principle then ensures that~$\phi_{p,\bar{\lambda}}>0$ in~$B_1$, and Hopf's
boundary point lemma shows that $\pd_\nu\phi_{p,\bar{\lambda}}<0$.

Let us now pass to the second point of Assumption~A. Let us
take~$\bar{\lambda}<\La_2$, where $\La_2$ is chosen so
that $0<\La_2\leq \La_1'$ and also
\[
\La_2\,\|f_z\|_{L^\infty(M\times[0,1])}<\la_1(B_1)\,,
\]
where $\la_1(B_1)$ is the first Dirichlet eigenvalue of the unit
ball. With a positive $\bar{\lambda}<\La_2$ we automatically obtain that $\Delta+\bar{\lambda}\, f_z(p,\phi_p)$ is
coercive, i.e., that there is a positive constant~$c$ such that
\begin{equation}\label{coercive}
\int_{B_1}\left(| \nabla u(x)|^2-\bar{\lambda}\,
  f_z(p,\phi_{p,\bar{\lambda}}(x))\,u(x)^2\right)\, dx\geq c\|u\|_{H^1(B_1)}^2
\end{equation}
for all $u\in H^1_0(B_1)$. It is standard that the inverse of
$\Delta+\bar{\lambda} \, f_z(p,\phi_{p,\bar{\lambda}})$ with Dirichlet boundary condition then exists
as a map $\C^{\alpha}(B_1)\to \C^{2,\alpha}(B_1)$.

We now pass to the third point of Assumption~A. We are going to prove that the
coercitivity condition~\eqref{coercive} implies the third condition. Equivalently, for each integer $j\geq 2$, let $b_{p,j}(r)$ satisfy
        Equation~\eqref{eq:a_j} with the boundary conditions $b_{p,j}'(0)=0$ and $b_{p,j}(1)=0$. We have to show that $b_{p,j}'(1)\neq 0$ for each $j\geq 2$.
			
			First we claim that $b_{p,j}\leq 0$ for all $j\geq
                        2$. To prove it, let us apply the inequality~\eqref{coercive} to a radial
                        function $u\in H^1_0(B_1)$, which we simply
                        denote by $u(r)$, to obtain
			\begin{equation}\label{eq:negative_radial}
			\int_0^1\left(u'(r)^2-\bar{\lambda} f_z(p,\phi_{p,\bar{\lambda}}(r))u(r)^2\right)r^{n-1}\,
                        dr\geq 0.
			\end{equation}
			If $b_{p,j}\geq 0$ in an interval $[r_1,r_2]$ with $b_{p,j}(r_i)=0$, then multiplying \eqref{eq:a_j} by $b_{p,j}r^{n-1}$ and integrating by parts yields
			\[
			\int_{r_1}^{r_2}\left((b_{p,j}')^2-\bar{\lambda} f_z(p,\phi_{p,\bar{\lambda}})b_{p,j}^2+\frac{1}{r^2}j(j+n-2)b_{p,j}^2\right)r^{n-1}\,
                        dr\leq 0,
			\]
			which, by \eqref{eq:negative_radial}, implies that $b_{p,j}\equiv 0$ on $[r_1,r_2]$. If $b_{p,j}\geq 0$ in an interval $[0,r_2]$ with $b_{p,j}(r_2)=0$, a similar argument (using $b_{p,j}'(0)=0$) shows that $b_{p,j}\equiv 0$ on $[0,r_2]$. Altogether, this proves the claim.
			
			Suppose now that $b_{p,j}'(1)=0$, for some $j\geq 2$. Then, since $b_{p,j}(1)=0$ and $b_{p,j}\leq 0$ for all $j\geq 2$, we have $b_{p,j}''(1)\leq 0$, but evaluating \eqref{eq:a_j} at $r=1$ yields
			\begin{align*}
			0&=(n-1)b_{p,j}'(1)=(n-1-j(j+n-2))\phi_{p,\bar{\lambda}}'(1)-b_{p,j}''(1)
			\\
			&\geq (n-1-j(j+n-2))\phi_{p,\bar{\lambda}}'(1)>0,
			\end{align*}
			where we have used that
\[
\phi_{p,\bar{\lambda}}'(1)=\pd_\nu \phi_{p,\bar{\lambda}}|_{\pd B_1}<0
\]
by
                     Hopf's boundary point lemma. This gives a
                        contradiction. Therefore, $b_{p,j}'(1)>0$ for
                        all $j\geq 2$. This completes the proof of
                        the theorem upon noticing that all the smallness
                        conditions that we impose on~$\bar{\lambda}$ are
                        uniform in $p\in M$ by the compactness of~$M$. 
\end{proof}

Now, our aim in the rest of this section is to study the second class
of nonlinearities considered in Theorem~\ref{th:main}, namely those functions
$f(p,z)$ that vanish at $z=0$ and satisfy certain additional assumptions. In order to show that these nonlinearities satisfy Assumption~A (which is done in Theorem~\ref{T.ex2} below), we need to introduce some preliminary definitions and results. These will also play an important role later in Section~\ref{sec:setup}.

Let us assume that a nonlinearity $f\in C^{1,\al}_{\mathrm{loc}}(M\times\R)$ satisfies conditions (i) and (ii) in Assumption~A, with $\bar{\lambda}=1$. That is, for each point $p\in M$, there exists a positive radial
function $\phi_p(x)$ of class $C^{2,\al}$ on the unit Euclidean ball,
varying differentiably with~$p$ and solving the Dirichlet problem
	\begin{equation*}%\label{eq:onpB_1}
	\left\{
	\begin{array}{rcll}
	\Delta_x \phi_p(x)+f(p,\phi_p(x))&=&0 & \text{in } B_1\,,
	\\
	\phi_p&=&0 & \text{on } \partial B_1,
	\end{array}
	\right.
	\end{equation*}
for each $p\in M$. Moreover, for each $p\in M$, the linear operator 
\[
L_p:=\Delta+ f_z(p,\phi_p)\colon \C^{2,\alpha}_{\mathrm{Dir}}(B_1)\to\C^{0,\alpha}(B_1)
\] 
is invertible.

Let $\C^{k,\alpha}\avg(S^{n-1})$ denote the space of functions on $S^{n-1}$ of class $C^{k,\alpha}$ whose integral on $S^{n-1}$ vanishes. By the invertibility of $L_p$ we can consider, for each $w\in\C^{2,\alpha}\avg(S^{n-1})$, the unique solution $\psi_w$ in $\C^{2,\alpha}(B_1)$~to
\begin{equation}\label{eq:psi_dirichlet}
\left\{
\begin{array}{rcll}
L_p\psi_w&=&0 & \text{in } B_1
\\
\psi_w&=&-c_1 w& \text{on } \partial B_1,
\end{array}
\right.
\end{equation}
where $c_1:=\partial_r\phi_p\rvert_{r=1}$ is the outward radial derivative of $\phi_p$ on $\partial B_1$. Since $\phi_p$ is radial, $c_1$ is a constant. The assignment $w\mapsto\psi_w$ allows us to define the operator
\begin{equation}\label{eq:H}
\begin{array}{rccl}
H_p\colon & \C^{2,\alpha}\avg(S^{n-1}) & \to & \C^{1,\alpha}\avg (S^{n-1})
\\
&w& \mapsto & (\partial_r\psi_w+c_2 w)\rvert_{\partial B_1},
\end{array}
\end{equation}
where $c_2:=\partial^2_r\phi_p\rvert_{r=1}$ is another
constant. Indeed, we will see that $H_p$ takes values in
$\C^{1,\alpha}\avg(S^{n-1})$.

Denote by $\Delta_{S^{n-1}}$ the Laplace-Beltrami operator of the unit
sphere $S^{n-1}$ of $\R^n$. It is well known that the eigenvalues of
$-\Delta_{S^{n-1}}$ are $\mu_j:=j(n+j-2)$ with $j$ ranging over the
nonnegative integers. Denote by~$V_j$ the corresponding eigenspaces,
which are given by the restriction to the unit sphere of the space of
harmonic polynomials on~$\R^n$ that are homogeneous of degree~$j$. Then we have:

\begin{proposition}\label{prop:H}
	The operator $H_p$ satisfies the following properties:
	\begin{enumerate}[{\rm (a)}]
		\item $H_p\colon\C^{2,\alpha}\avg(S^{n-1}) \to \C^{1,\alpha}\avg(S^{n-1})$ is a self-adjoint, first order elliptic operator that preserves the eigenspaces of $-\Delta_{S^{n-1}}$.
		\item It can be defined alternatively by 
		\[
		H_p(w)=\partial_r \Psi_w\rvert_{\partial B_1}, 
		\]
		where, for each $w\in C^{2,\alpha}\avg(S^{n-1})$, $\Psi_w$ is the unique continuous solution to
		\begin{equation}\label{eq:alternative_H}
		\left\{
		\begin{array}{rcll}
		L_p\,\Psi_w&=&\frac{1}{r^2}\partial_r\phi_p(\Delta_{S^{n-1}}+n-1)w & \text{in } B_1
		\\
		\Psi_w&=&0& \text{on } \partial B_1.
		\end{array}
		\right.
		\end{equation}
		\item The kernel of $H_p$ contains the eigenspace $V_1$ of $-\Delta_{S^{n-1}}$ with eigenvalue $n-1$.
		\item The following three statements are equivalent: 
		\begin{enumerate}[{\rm (1)}]
			\item $\ker H_p=V_1$,
			\item $f$ satisfies condition (iii) of Assumption A with $\bar{\lambda}=1$,
			\item there exists a constant $c_p>0$ such that $\|H_p(w)\|_{C^{1,\al}(S^{n-1})}\geq c_p\|w\|_{C^{2,\al}(S^{n-1})}$ for all $w\in C^{2,\al}(S^{n-1})\cap (V_0\oplus V_1)^\perp$.
		\end{enumerate}
	\end{enumerate}
\end{proposition}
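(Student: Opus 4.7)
The strategy is to exploit the $O(n)$-symmetry throughout: since $\phi_p$ is radial, $L_p$ commutes with rotations, and hence so does $H_p$. In particular $H_p$ preserves the isotypic components $V_j$ of $-\De_{S^{n-1}}$, which gives the eigenspace-preservation claim of (a); the image lies in the mean-zero subspace because $V_j\subset L^2\avg(S^{n-1})$ for every $j\geq 1$ and the domain has no $V_0$ component. First-order ellipticity is the standard fact that the Dirichlet-to-Neumann operator of an elliptic boundary value problem is a first-order elliptic pseudodifferential operator, the zeroth order multiplier $c_2 w$ being harmless. For self-adjointness, I would apply Green's identity in $B_1$ to the pair $\psi_u,\psi_v\in\ker L_p$: the boundary contribution $\int_{\pd B_1}(\psi_v\,\pd_r\psi_u-\psi_u\,\pd_r\psi_v)\,d\sigma$ vanishes, and using $\psi_u|_{\pd B_1}=-c_1 u$ and $\psi_v|_{\pd B_1}=-c_1 v$ this rearranges to $(H_p u,v)_{L^2}=(H_p v,u)_{L^2}$ after absorbing the symmetric contribution $c_2\int uv$.

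For (b), the natural ansatz is $\Psi_w:=\psi_w + \phi_p'(r)\,w(\theta)$. Its boundary value on $\pd B_1$ vanishes because $\phi_p'(1)=c_1$ cancels the Dirichlet data of $\psi_w$. Differentiating the radial equation for $\phi_p$ in $r$ and applying the product decomposition $\De(\rho(r)w(\theta))=w(\rho''+\tfrac{n-1}{r}\rho')+\tfrac{\rho}{r^2}\De_{S^{n-1}}w$, the $f_z(p,\phi_p)\phi_p'$ terms cancel and one obtains $L_p(\phi_p' w)=\tfrac{1}{r^2}\phi_p'(\De_{S^{n-1}}+n-1)w$; so $L_p\Psi_w$ coincides with the source in \eqref{eq:alternative_H}, while $\pd_r\Psi_w|_{\pd B_1}=\pd_r\psi_w|_{\pd B_1}+\phi_p''(1)\,w=H_p(w)$. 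Part (c) then follows directly from (b): on $V_1$ the source in \eqref{eq:alternative_H} vanishes, invertibility of $L_p$ with Dirichlet data forces $\Psi_w\equiv 0$, and hence $H_p(w)=0$.

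For (d), equivariance plus irreducibility of $V_j$ as an $O(n)$-representation implies that for $w\in V_j$ the solution of \eqref{eq:alternative_H} has separated form $\Psi_w(r,\theta)=\tilde a_{p,j}(r)\,w(\theta)$, where the radial profile satisfies precisely the ODE \eqref{eq:a_j} together with $\tilde a_{p,j}(1)=0$ and continuity at $r=0$. This boundary value problem is uniquely solvable because the invertibility of $L_p$ forces the regular homogeneous solution $\al_j\sim r^{n+j-2}$ not to vanish at $r=1$. Hence $H_p|_{V_j}$ is multiplication by the scalar $\tilde a_{p,j}'(1)$, and
\begin{equation*}
\ker H_p=\bigoplus_{\{j\,:\,\tilde a_{p,j}'(1)=0\}} V_j.
\end{equation*}
To connect this with condition (iii), I compare $\tilde a_{p,j}$ with the function $a_{p,j}$ of Assumption~A: since both satisfy the same inhomogeneous ODE, their difference is a homogeneous solution with Cauchy data $(0,-\tilde a_{p,j}'(1))$ at $r=1$. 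A Frobenius analysis at $r=0$ shows first that both a distinguished particular solution and $\al_j$ vanish there (so $\tilde a_{p,j}(0)=0$), and second, using $\al_j(1)\neq 0$, that the singular branch $r^{-j}$ enters the homogeneous correction with nonzero coefficient whenever $\tilde a_{p,j}'(1)\neq 0$. Consequently $\lim_{r\to 0^+}a_{p,j}(r)=0$ precisely when $\tilde a_{p,j}'(1)=0$, yielding $(1)\Leftrightarrow(2)$. Finally $(1)\Leftrightarrow(3)$ follows from (a): $H_p$ is self-adjoint and first-order elliptic, hence Fredholm of index zero on the Hölder scale, so under (1) its restriction to $\C^{2,\al}(S^{n-1})\cap (V_0\oplus V_1)^\perp$ is an isomorphism onto its image in $\C^{1,\al}\avg(S^{n-1})$, and standard elliptic theory delivers the asserted two-sided estimate.

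The main technical obstacle I anticipate is the Frobenius/Wronskian bookkeeping in (d): verifying the uniqueness of $\tilde a_{p,j}$ via the invertibility of $L_p$, and then showing that the singular coefficient in the homogeneous correction is strictly nonzero (so that the limit $\lim_{r\to 0^+} a_{p,j}(r)$ is controlled by $\tilde a_{p,j}'(1)$ in the stated way). Once this is in place, the rest of the proposition is a fairly direct combination of rotational invariance, the standard Dirichlet-to-Neumann calculus, and Green's identity.
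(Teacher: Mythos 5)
Your proposal is correct and follows essentially the same route as the paper: the explicit ansatz $\Psi_w=\psi_w+\pd_r\phi_p\cdot w$ for part (b), equivariance plus separation of variables to reduce $H_p|_{V_j}$ to the radial ODE~\eqref{eq:a_j}, and the identification of the eigenvalue of $H_p$ on $V_j$ with $b_{p,j}'(1)$ for the continuous solution $b_{p,j}$ vanishing at $r=1$. The only mild divergence is at the end: for $(1)\Leftrightarrow(3)$ the paper argues that the eigenvalues $\alpha_{p,j}\to+\infty$ (by ellipticity and lower boundedness), while you invoke index-zero Fredholmness on the H\"older scale; both give the same two-sided estimate and are standard. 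One small inherited slip: you quote the indicial exponents as $n+j-2$ and $-j$ (copying Remark~\ref{R.assA}), but solving $s^2+(n-2)s-j(j+n-2)=0$ gives $s=j$ and $s=-(j+n-2)$; this does not affect your Frobenius argument, since all that matters is that one exponent is nonnegative and the other strictly negative, so that the singular branch diverges at $r=0$.
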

\begin{proof}
	The operator $H_p$ is a linear combination of the
	Dirichlet-to-Neumann operator for $L_p=\Delta+f_z(p,\phi_p)$ and the identity. Hence, $H_p$ is elliptic
	of first order, essentially self-adjoint on the space of
	smooth functions, and lower bounded, meaning that $H_p\geq -
        C$ in the sense that
\[
\int_{S^{n-1}} w\, H_p(w)\, d\si\geq- C \|w\|_{L^2}^2
\]
for all $w$ in the Sobolev space $H^1(S^{n-1})$.
	
	In order to prove the other statements, we will first give an alternative description of the operator $H_p$. Thus, for each $w\in C^{2,\alpha}\avg(S^{n-1})$ we consider the unique continuous solution $\Psi_w$ to \eqref{eq:alternative_H}; since $\partial_r\phi_p(0)=0$, standard estimates
	guarantee the existence of such $\Psi_w$ and its regularity
        away from the origin. Using the
	formula
	\[
	\Delta h=\partial_r^2 h+\frac{n-1}{r}\partial_r
	h+\frac{1}{r^2}\Delta_{S^{n-1}}h
	\]
	and the fact that 
	\[
	\Delta \partial_r \phi_p=-f_z(p,\phi_p)\partial_r\phi_p
	+\frac{n-1}{r^2}\partial_r\phi_p\,,
	\]
	some direct calculations show that $\Psi_w$ is given by 
\[
\Psi_w(y)=\psi_w(y)+\partial_r\phi_p(y) w(y/|y|)
\]
for $y\in B_1$, where $\psi_w$ is the solution to \eqref{eq:psi_dirichlet}. Hence, we have that
	\[
	H_p(w)=\partial_r \Psi_w\rvert_{\partial B_1}, \quad \text{for each } w\in C^{2,\alpha}\avg(S^{n-1}).
	\]
	
	This alternative definition easily implies that $H_p$ preserves the eigenspaces $V_j$ of $-\Delta_{S^{n-1}}$, its image lies in the space of functions with average zero in $S^{n-1}$, and $V_1\subset \ker H_p$. 
	
	In order to conclude the proof we have to show claim (d). Thus, let
	\[
	w=\sum_{j=1}^\infty  w_j\,,
	\]
	with $w_j\in V_j$, be the
	eigenfunction decomposition of
	$w\in C^{2,\alpha}\avg(S^{n-1})$. Then an easy computation
        shows that
	\[
	H_p(w)=\sum_{j=1}^\infty \alpha_{p,j} w_j\,,
	\]
	where
	$\alpha_{p,j}=b_{p,j}'(1)$ and $b_{p,j}$ is the continuous solution
	to~\eqref{eq:a_j} with $b_{p,j}(1)=0$.
	Since $V_1\subset \ker H_p$, we have $\alpha_1=0$. Condition (iii) of Assumption~A is equivalent to $\alpha_{p,j}=b_{p,j}'(1)\neq 0$ for each $j\geq 2$, which happens precisely when $V_1=\ker H_p$. Since $H_p$ is elliptic and lower bounded, its eigenvalues satisfy $\lim_{j\to\infty}\alpha_{p,j}=+\infty$ and, hence, $\ker H_p=V_1$ if and only if there exists a positive constant $c_p$ such that $\|H_p(w)\|_{C^{1,\al}(S^{n-1})}\geq c_p\|w\|_{C^{2,\al}(S^{n-1})}$ for all $w\in C^{2,\al}_{\mathrm{avg}}(S^{n-1})\cap V_1^\perp$. This concludes the proof.
\end{proof}

Now we are in a position to show that the second kind of nonlinearities mentioned in Theorem~\ref{th:main} satisfy Assumption~A:

\begin{theorem}\label{T.ex2}
Suppose that $M$ is compact, the function $f$ is of class $C^2\loc(M\times\R)$ and, for all $p\in M$, it satisfies $f(p,0)=0$,  $f_z(p,0)=c>0$, where $c$ is independent of $p$, and $f_{zz}(p,0)\neq 0$.
Then Assumption~A holds for all $\bar{\lambda}$ in an open interval one of whose endpoints is $\lambda_1(B_1)/c$, where $\lambda_1(B_1)$ is the first Dirichlet eigenvalue of the Laplacian of the Euclidean unit ball.
%Then there exists $\delta>0$ such that Assumption~A holds for all $\bar{\lambda}$ with 
%\[
%%\frac{\lambda_1(B_1)}{c}-\delta<\bar{\lambda}<\frac{\lambda_1(B_1)}{c},\qquad \text{or}\qquad \frac{\lambda_1(B_1)}{c}<\bar{\lambda}<\frac{\lambda_1(B_1)}{c}+\delta,
%\bar{\lambda}\in\left(\frac{\lambda_1(B_1)}{c},\frac{\lambda_1(B_1)}{c}+\delta\right),\qquad \text{or}\qquad \bar{\lambda}\in\left(\frac{\lambda_1(B_1)}{c}-\delta,\frac{\lambda_1(B_1)}{c}\right),
%\]
%depending on whether $f_{zz}(\cdot,0)$ is negative or positive, respectively, and where $\lambda_1(B_1)$ is the first Dirichlet eigenvalue of the Laplacian of the Euclidean unit ball.
\end{theorem}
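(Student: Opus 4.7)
The plan is to prove this by a bifurcation argument from the trivial solution $\phi\equiv 0$. After replacing $f$ by $f/c$ we may assume $c=1$, so the critical parameter is $\bar\lambda_0:=\lambda_1(B_1)$. The map $(\bar\lambda,\phi)\mapsto \Delta\phi+\bar\lambda f(p,\phi)$ has $(\bar\lambda_0,0)$ as a bifurcation point: its $\phi$-linearization at $(\bar\lambda_0,0)$ is $\Delta+\lambda_1(B_1)$, whose Dirichlet kernel in $C^{2,\al}\Dir(B_1)$ is one-dimensional, spanned by the first (positive, radial) eigenfunction $\varphi_1$, and the Crandall--Rabinowitz transversality $f_z(p,0)\varphi_1 \notin {\rm Range}(\Delta+\lambda_1(B_1))$ holds because $c>0$. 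Restricting the bifurcation analysis to the closed subspace of radial functions, which is preserved by the problem, we obtain a smooth branch $s\mapsto(\bar\lambda(s,p),\phi_{s,p})$ of radial nontrivial solutions with $\phi_{0,p}=0$, $\bar\lambda(0,p)=\bar\lambda_0$, and $\phi_{s,p}=s\varphi_1+O(s^2)$, depending smoothly on $p\in M$.

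Expanding to order $s^2$ and projecting onto $\varphi_1$ via the Fredholm alternative yields
\[
\partial_s\bar\lambda(0,p)\;=\;-\frac{\lambda_1(B_1)\,f_{zz}(p,0)}{2}\cdot\frac{\int_{B_1}\varphi_1^{3}}{\int_{B_1}\varphi_1^{2}},
\]
which is nonzero and of constant sign on each connected component of $M$ since $f_{zz}(p,0)$ is continuous and nonvanishing. Hence the bifurcation is transcritical, and inverting gives the solutions parametrized by $\bar\lambda$ in a one-sided open interval with endpoint $\bar\lambda_0$. For small $s>0$ we have $\phi_p=s\varphi_1+O(s^2)>0$ in $B_1$ with $\partial_r\phi_p(1)<0$, giving (i). For (ii), the first Dirichlet eigenvalue of $-L_p=-\Delta-\bar\lambda f_z(p,\phi_p)$ is smooth in $s$, vanishes at $s=0$, and has nonzero $s$-derivative at $0$ by an analogous first-order perturbation computation, so $L_p$ is invertible for small $s\neq 0$.

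The delicate part is condition~(iii), which by Proposition~\ref{prop:H}(d) is equivalent to $\ker H_p=V_1$; since $V_1\subseteq\ker H_p$ always, it suffices to prove $\alpha_{p,j}\neq 0$ for every $j\geq 2$. Writing $H_p(Y_j)=A_{p,j}'(1)\,Y_j$ and expanding $A_{p,j}=s\,\tilde A_j+O(s^2)$ along the branch, the leading term $\tilde A_j$ satisfies the $p$-independent inhomogeneous ODE
\[
\tilde A_j''+\frac{n-1}{r}\tilde A_j'+\Big(\lambda_1(B_1)-\frac{j(j+n-2)}{r^2}\Big)\tilde A_j\;=\;\frac{n-1-j(j+n-2)}{r^{2}}\,\varphi_1'(r),
\]
with $\tilde A_j(1)=0$ and $\tilde A_j$ regular at $r=0$. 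Since $\lambda_1(B_1)$ lies strictly below the first Dirichlet eigenvalue with $V_j$ angular content for $j\geq 1$, the regular solution $w_j$ of the associated homogeneous ODE is nonvanishing on $[0,1]$; using the particular solution $\varphi_1'(r)Y_j(\omega)$ of the full equation on $B_1$ one then obtains the explicit formula $\tilde A_j'(1)=-\varphi_1'(1)\bigl((n-1)+w_j'(1)/w_j(1)\bigr)$.

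The main obstacle is showing $w_j'(1)/w_j(1)\neq -(n-1)$ for every integer $j\geq 2$, a universal statement about Bessel-type functions on the unit ball. Writing $w_j(r)=r^{-(n-2)/2}J_{\nu_j}(\mu r)$ with $\mu=\sqrt{\lambda_1(B_1)}$ and $\nu_j=j+(n-2)/2$, this inequality reduces via Bessel recurrences to $\mu J_{\nu_j-1}(\mu)\neq(j-1)J_{\nu_j}(\mu)$, which can be verified using the defining identity $J_{(n-2)/2}(\mu)=0$; alternatively, a Sturm-type comparison applied to the ODEs for $w_j$ and for $r\varphi_1'$ (which has $V_1$ angular content with prescribed boundary behavior) gives the same conclusion. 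Since $\tilde A_j'(1)$ is independent of $p$ and $\alpha_{p,j}\to+\infty$ as $j\to\infty$ by the ellipticity and lower-boundedness of $H_p$ from Proposition~\ref{prop:H}(a), the resulting nondegeneracy is uniform in $p\in M$ and in $j\geq 2$ for all sufficiently small $s\neq 0$. Combined with the compactness of $M$, this completes the verification of Assumption~A.
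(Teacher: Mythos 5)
Your bifurcation framework (Crandall--Rabinowitz at $\bar\lambda_0=\lambda_1(B_1)/c$, the transcritical slope formula $\partial_s\bar\lambda(0,p)=-\tfrac12\lambda_0 f_{zz}(p,0)\int\varphi_1^3/\int\varphi_1^2$, and the $s$-perturbation argument for the invertibility of $L_p$) matches the paper's argument for items~(i) and~(ii) of Assumption~A. For item~(iii) you also use the same key structural observation, namely that the eigenvalues of $H_{p,s}$ are $O(s)$ with a nontrivial leading coefficient coming from a $p$-independent limit ODE; and your formula
\[
\tilde A_j'(1)=-\varphi_1'(1)\Bigl((n-1)+\frac{w_j'(1)}{w_j(1)}\Bigr)
\]
together with the reduction to $\mu J_{\nu_j-1}(\mu)\neq(j-1)J_{\nu_j}(\mu)$ is correct.

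The gap is that you never actually establish this Bessel inequality: you assert that it ``can be verified using the defining identity $J_{(n-2)/2}(\mu)=0$'' or ``alternatively by a Sturm-type comparison,'' but neither is carried out, and this is precisely the nontrivial analytic content of the step. The paper instead recognizes that the limit boundary problem~\eqref{eq:Psi_PS} and limit operator $\widetilde H_{p,0}$ are literally~(4.7)--(4.8) of Pacard--Sicbaldi~\cite{PS:AIF}, and cites the lower bound $\|\widetilde H_{p,0}(w)\|_{C^{1,\al}}\geq C\|w\|_{C^{2,\al}}$ for $w\perp V_0\oplus V_1$ proved there, then propagates it to small $s\neq 0$ by continuity. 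Your route re-derives what~\cite{PS:AIF} already proves, and since you leave the crucial inequality unchecked, the argument as written is incomplete. To repair it you should either fill in the Bessel estimate (e.g.\ by the three-term recursion for $b_\nu:=\mu J_\nu(\mu)/J_{\nu-1}(\mu)$ starting from $b_{n/2+1}=n$, combined with the fact that $\mu^2=\lambda_1(B_1)>n$), or simply cite~\cite{PS:AIF} as the paper does. The uniformity over $p\in M$ you invoke at the end is otherwise fine once the leading coefficient is known to be uniformly bounded away from zero for finitely many $j$, with the tail handled by $\alpha_{p,j}\to\infty$.
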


\begin{proof}
As in the proof of Theorem~\ref{T.ex1}, for each fixed $p$, consider the operator $\cal{G}\colon \RR\times C^{2,\al}\Dir(B_1)\to C^\al(B_1)$ given by
\[
\cal{G}(\la,\phi):= \De\phi+\la \, f(p,\phi)\,.
\]
By assumption, we have that $\cal{G}(\lambda,0)=0$, for all $\lambda\in\RR$.
Define 
\[
\lambda_{0}:=\lambda_1(B_1)/f_z(p,0)=\lambda_1(B_1)/c,
\]
Then the partial derivative
\[
D_\phi\cal{G}(\lambda_{0},0)=\Delta+\lambda_1(B_1)
\]
is an operator from $C^{2,\al}\Dir(B_1)$ to $C^\alpha (B_1)$ whose
kernel is generated by the first eigenfunction $\varphi_1$ of the
Dirichlet Laplacian of $B_1$ (which is normalized to have unit
$L^2$-norm), and whose range is the subspace
$\langle\varphi_1\rangle^\perp$ of functions in $C^{\alpha}(B_1)$ that
are $L^2$-orthogonal to $\varphi_1$, so in particular it has
codimension one in $C^\alpha(B_1)$. We have used here the well-known fact that the first
eigenvalue has multiplicity one.
Moreover, by assumption we have that the second partial derivative
\[
D_{\lambda\phi}\cal{G}(\lambda_{0},0)\varphi_1=c\,\varphi_1\neq 0
\]
does not belong to the range $\langle\varphi_1\rangle^\perp$ of $D_\phi\cal{G}(\lambda_0,0)=\Delta+\lambda_1(B_1)$.
Then we are in the conditions of Crandall-Rabinowitz Theorem~\cite[Lemma~1.1]{CR:arma}, whence we obtain the existence of $\lambda_{p,s}\in\R$ and $\phi_{p,s}\in\C^{2,\alpha}\Dir(B_1)$, both continuously differentiable depending on $s\in (-s_0,s_0)$ and on $p\in M$, such that $\lambda_{p,0}=\lambda_0$, $\phi_{p,0}=0$, and $\cal{G}(\lambda_{p,s},\phi_{p,s})=0$, that is,
\begin{equation*}
\Delta \phi_{p,s}+\lambda_{p,s}f(p,\phi_{p,s})=0.
\end{equation*}
Moreover, the same result guarantees that $\phi_{p,s}=s(\varphi_1+\widetilde{\phi}_{p,s})$ where $\widetilde{\phi}_{p,s}\in\langle\varphi_1\rangle^\perp$ is a $C^1$-curve with $\widetilde{\phi}_{p,0}=0$. Thus, for all $s\in(0,s_0)$ (possibly taking a smaller $s_0$), the function $\phi_{p,s}$ is positive in $B_1$, and also $f(p,\phi_{p,s})>0$ in $B_1$. Then, by Hopf's boundary point lemma, $\partial_\nu \phi_{p,s}\rvert_{B_1}<0$. Hence, for each fixed $s\in(0,s_0)$, $\phi_{p,s}$ satisfies item~(i) in Assumption~A for $\bar{\lambda}=\lambda_{p,s}$.

Now, \cite[Corollary~1.13]{CR:arma} guarantees the existence of $\varphi_{p,\lambda}\in\C^{2,\alpha}\Dir(B_1)$ and $\gamma_{p}(\lambda)\in\R$, both depending continuously differentiably on $\lambda$ in an open interval around $\lambda_0$, and of $v_{p,s}\in\C^{2,\alpha}\Dir(B_1)$ and $\mu_{p,s}\in\R$,  both depending continuously differentiably on $s\in(-s_0,s_0)$, such that
\begin{equation}\label{eq:varphi_gamma}
(\Delta+\lambda f_z(p,0))\varphi_{p,\lambda}=\gamma_{p}(\lambda)\varphi_{p,\lambda}
\end{equation}
and
\begin{equation}\label{eq:L_p,s}
L_{p,s}v_{p,s}=\mu_{p,s}v_{p,s},
\end{equation}
where $L_{p,s}$ is the linear operator given by
\[
L_{p,s}:=\Delta+\lambda_{p,s}f_z(p,\phi_{p,s}).
\]
Moreover,
\[
\gamma_{p}(\lambda_0)=0=\mu_{p,0},\qquad \varphi_{p,\lambda_0}=v_{p,0}=\varphi_1, \qquad v_{p,s}-\varphi_1\in \langle\varphi_1\rangle^\perp\cap C^{2,\alpha}\Dir(B_1),
\]
and $\mu_{p,s}$ is a simple eigenvalue of $L_{p,s}$. Now, if the nonlinearity satisfies $f_{zz}(p,0)<0$ for all $p\in M$, then $\lambda_{p,s}>\lambda_0$ for all $s>0$ small enough (see~\cite[Corollary~1.1]{Lions}). If $f_{zz}(p,0)>0$ for all $p\in M$, then $\lambda_{p,s}<\lambda_0$ for all $s>0$ small enough (see~\cite[Remark~1.4]{Lions}). In any case, the derivative $\partial_s\lambda_{p,s}$ cannot be identically zero in any interval around $s=0$. Since \cite[Theorem~1.16]{CR:arma} guarantees that $\mu_{p,s}$ and $-s\,\partial_s\lambda_{p,s}$ have the same zeroes near $s=0$, it follows that $\mu_{p,s}\neq 0$ for infinitely many $s$ near $s=0$. Then, the same result states that 
\[
\lim_{\substack{s\to 0\\\mu_{p,s}\neq 0}} \frac{-s\, \partial_s \lambda_{p,s}\,\gamma'(\lambda_0)}{\mu_{p,s}}=1,
\]
which, by L'H\^opital's rule, implies
\begin{equation}\label{eq:from_limit}
-(\partial_s\lambda_{p,0})\gamma'(\lambda_0)=\partial_s\mu_{p,0},
\end{equation}
where $\partial_s\lambda_{p,0}$ and $\partial_s\mu_{p,0}$ denote the partial derivatives of $\lambda_{p,s}$ and $\mu_{p,s}$ with respect to $s$ at $s=0$, respectively.

Relation~\eqref{eq:varphi_gamma} can be rewritten as
$(\Delta+a(\lambda))\varphi_{p,\lambda}=0$, where $a(\lambda)=\lambda
f_z(p,0)-\gamma_{p}(\lambda)\in\R$. Then, $a(\lambda)$ is an
eigenvalue of $-\Delta$ with Dirichlet boundary conditions. Since
$a(\lambda_0)=\lambda_1(B_1)$, by the discreteness of the eigenvalues
of $-\Delta$ and the continuity of $a(\lambda)$ with respect to
$\lambda$, we deduce that that $a(\lambda)=\lambda_1(B_1)$ for
all~$\la$ close to~$\la_0$, that is
\[
\gamma_{p}(\lambda)=\lambda f_z(p,0)-\lambda_1(B_1),
\]
for all $\lambda$ in a neighborhood of $\lambda_0$. Hence, 
\begin{equation}\label{eq:gamma'}
\gamma'_{p}(\lambda_0)=f_z(p,0)=c>0. 
\end{equation}
Similarly, since $\mu_{p,0}=0$ is the smallest eigenvalue of $-L_{p,0}=-(\Delta+\lambda_1(B_1))$, we have that $\mu_{p,s}$ is also the smallest eigenvalue of $-L_{p,s}$, for all $s\in(-s_0,s_0)$. Hence, taking a smaller $s_0$ if necessary, for each $s\in(-s_0,s_0)$, $L_{p,s}\colon C^{2,\alpha}_{\mathrm{Dir}}(B_1)\to C^{0,\alpha}(B_1)$ is invertible if and only if $\mu_{p,s}\neq 0$. Let us show that this indeed holds for all small enough $s>0$.

By differentiating~\eqref{eq:L_p,s} with respect to $s$ we get
\begin{align*}
0={}&\Delta \partial_s v_{p,s}+\partial_s\lambda_{p,s} f_z(p,\phi_{p,s})v_{p,s}+\lambda_{p,s}f_{zz}(p,\phi_{p,s})\partial_s\phi_{p,s}v_{p,s}
\\
&+\lambda_{p,s}f_z(p,\phi_{p,s})\partial_s v_{p,s}-\partial_s\mu_{p,s} v_{p,s}-\mu_{p,s}\partial_s v_{p,s},
\end{align*}
which, evaluated at $s=0$, multiplied by $\varphi_1$ and integrating in $B_1$, yields
\begin{equation*}
0=f_z(p,0)\partial_s\lambda_{p,0}+\lambda_0f_{zz}(p,0)\int_{B_1}\varphi_1^3-\partial_s\mu_{p,0}.
\end{equation*}
This, together with~\eqref{eq:from_limit}, \eqref{eq:gamma'} and the assumption $f_{zz}(p,0)\neq 0$, implies that
\begin{equation*}
\partial_s\lambda_{p,0}=-\frac{1}{2c}\lambda_0f_{zz}(p,0)\int_{B_1}\varphi_1^3\neq 0\,,
\end{equation*}
where we are using that $\vp_1>0$. Indeed, $\lambda_{p,s}$ is strictly increasing in $s$ if $f_{zz}(p,0)<0$, whereas $\lambda_{p,s}$ is strictly decreasing in $s$ if $f_{zz}(p,0)>0$, for $s\in(-s_0,s_0)$, maybe taking a smaller $s_0$. Taking into account that $\mu_{p,s}$ and $-s\,\partial_s\lambda_{p,s}$ have the same zeroes, we deduce that $\mu_{p,s}\neq 0$ for all $s\in(s_0,s_0)\setminus\{0\}$. Thus, $L_{p,s}\colon C^{2,\alpha}_{\mathrm{Dir}}(B_1)\to C^{0,\alpha}(B_1)$ is invertible, which proves item (ii) in Assumption~A.

Fix $p\in M$ and $s\in(-s_0,s_0)\setminus\{0\}$. Similarly as in the discussion before Proposition~\ref{prop:H}, for each $w\in C_{\mathrm{avg}}^{2,\alpha}(S^{n-1})$ consider the unique solution $\psi_{p,s,w}\in C^{2,\alpha}(B_1)$ to
\begin{equation}\label{eq:psi_p,s,w}
\left\{
\begin{array}{rcll}
L_{p,s}\psi_{p,s,w}&=&0 & \text{in } B_1
\\
\psi_{p,s,w}&=&-c_1 w& \text{on } \partial B_1,
\end{array}
\right.
\end{equation}
where $c_1:=\partial_r\phi_{p,s}\rvert_{r=1}\in\R$. 

Now, for each $p\in M$ and $s\in(-s_0,s_0)\setminus\{0\}$, associated to $L_{p,s}$ we consider the operator $H_{p,s}$ defined analogously as in~\eqref{eq:H}. By Proposition~\ref{prop:H}, $H_{p,s}$ can be defined alternatively by
\[
H_{p,s}(w)=\partial_r \Psi_{p,s,w}\rvert_{\partial B_1}, 
\]
where, for each $w\in C^{2,\alpha}\avg(S^{n-1})$ and $s\neq 0$, $\Psi_{p,s,w}$ is the unique continuous solution to
\begin{equation}\label{eq:Psi_p,s}
\left\{
\begin{array}{rcll}
L_{p,s}\Psi_{p,s,w}&=&\frac{1}{r^2}\partial_r\phi_{p,s}(\Delta_{S^{n-1}}+n-1)w & \text{in } B_1
\\
\Psi_{p,s,w}&=&0& \text{on } \partial B_1.
\end{array}
\right.
\end{equation}
%Since $\phi_{p,0}=0$, it follows that $\Psi_{p,0,w}=0$ for all $w\in C^{2,\alpha}\avg(S^{n-1})$. 
For $s\neq 0$ we can write 
\[
\Psi_{p,s,w}=s\, \eta_{p,s,w},
\]
for some $\eta_{p,s,w}\in \C^{0}_{\mathrm{Dir}}(B_1)$. Using this, the identity $\phi_{p,s}=s(\varphi_1+\widetilde{\phi}_{p,s})$, and dividing by~$s$, \eqref{eq:Psi_p,s} implies that $\eta_{p,s,w}$ is the only continuous solution to
\begin{equation}\label{eq:L_p,s-bis}
\left\{
\begin{array}{rcll}
L_{p,s}\eta_{p,s,w}&=&\frac{1}{r^2}\partial_r(\varphi_1+\widetilde{\phi}_{p,s})(\Delta_{S^{n-1}}+n-1)w & \text{in } B_1
\\
\eta_{p,s,w}&=&0& \text{on } \partial B_1,
\end{array}
\right.
\end{equation}
for each $s\neq 0$ and $w\in C^{2,\al}_{\mathrm{avg}}(S^{n-1})$. 

Since $v_{p,s}$ is a positive function in $B_1$, a result by Gidas, Ni and Nirenberg~\cite[Theorem~1]{GNN} guarantees that $v_{p,s}$ is radial, and hence, $\partial_r v_{p,s}\rvert_{r=1}$ is constant on $S^{n-1}$. Then, by the second equation in~\eqref{eq:psi_p,s,w} and since $\int_{S^{n-1}}w=0$, we have
\begin{equation}\label{eq:v_p,s-radial}
\int_{S^{n-1}} \psi_{p,s,w}\partial_r v_{p,s}=-\int_{S^{n-1}}c_1 w \,\partial_r v_{p,s}=0.
\end{equation}
Now, by taking the product of the first equation in~\eqref{eq:psi_p,s,w} with $v_{p,s}\in C^{2,\al}\Dir(B_1)$, integrating by parts and using \eqref{eq:L_p,s} and~\eqref{eq:v_p,s-radial}, we have
\begin{align*}
0&=\int_{B_1}v_{p,s}L_{p,s}\psi_{p,s,w}
\\
&=\int_{B_1}\psi_{p,s,w} \Delta v_{p,s}-\int_{S^{n-1}}\psi_{p,s,w} \partial_r v_{p,s}+\int_{B_1}\lambda_{p,s}f_z(p,\phi_{p,s})\psi_{p,s,w} v_{p,s}
\\
&=\int_{B_1}\psi_{p,s,w}\left(\mu_{p,s}-\lambda_{p,s}f_z(p,\phi_{p,s})\right)v_{p,s}+\int_{B_1}\lambda_{p,s}f_z(p,\phi_{p,s})\psi_{p,s,w} v_{p,s}
\\
&=\mu_{p,s}\int_{B_1}v_{p,s}\psi_{p,s,w}.
\end{align*}
Thus, if $\mu_{p,s}\neq 0$, then $\psi_{p,s,w}$ is $L^2$-orthogonal to
$v_{p,s}$. It follows that $\eta_{p,s,w}$ is also $L^2$-orthogonal to
$v_{p,s}$, and therefore to the kernel of~$L_{p,s}$, for each $s\neq 0$.

Since $L_{p,0}=\Delta+\lambda_1(B_1)$ and $\widetilde{\phi}_{p,0}=0$,
one can now take the limit $s\to 0$ in~\eqref{eq:L_p,s-bis} to deduce
that
\[
\eta_{p,0,w}:=\lim_{s\to 0} \eta_{p,s,w}
\]
is the unique continuous solution in $\langle\varphi_1\rangle^\perp$ to
\begin{equation}\label{eq:Psi_PS}
\left\{
\begin{array}{rcll}
\Delta\eta_{p,0,w}+\lambda_1(B_1)\eta_{p,0,w}&=&\frac{1}{r^2}\partial_r\varphi_1(\Delta_{S^{n-1}}+n-1)w & \text{in } B_1
\\
\eta_{p,0,w}&=&0& \text{on } \partial B_1.
\end{array}
\right.
\end{equation}
Now, for each $s\in(-s_0,s_0)$ we define
\[
\widetilde{H}_{p,s}(w):=\partial_r\eta_{p,s,w}\rvert_{\partial B_1}.
\]
Hence $H_{p,s}(w)=s\,\widetilde{H}_{p,s}(w)$, for all $s\in(-s_0,s_0)$ and $w\in C^{2,\al}_{\mathrm{avg}}(S^{n-1})$.

The boundary problem~\eqref{eq:Psi_PS} is precisely the one in~\cite[(4.7)]{PS:AIF}, and the operator $\tilde{H}_{p,0}$ is the one in~\cite[(4.8)]{PS:AIF}. It was proved in~\cite{PS:AIF} that $\|\tilde{H}_{p,0}(w)\|_{C^{1,\al}(S^{n-1})}\geq C \|w\|_{C^{2,\al}(S^{n-1})}$ for all $w$ that is $L^2$-orthogonal to $V_0\oplus V_1$, for some constant $C\in \R$. By the continuity of $H_{p,s}$ with respect to $s$ we have that $\|\tilde{H}_{p,s}(w)\|_{C^{1,\al}(S^{n-1})}\geq \frac{C}{2} \|w\|_{C^{2,\al}(S^{n-1})}$, $s\in(-s_0,s_0)$, maybe taking a smaller $s_0$. Hence
\[
\|H_{p,s}(w)\|_{C^{1,\al}(S^{n-1})}=s\,\|\tilde{H}_{p,s}(w)\|_{C^{1,\al}(S^{n-1})}\geq \frac{C\, s}{2} \|w\|_{C^{2,\al}(S^{n-1})},
\]
for all $s\in(-s_0,s_0)$ and $w\in C^{2,\al}_{\mathrm{avg}}(S^{n-1})\cap V_1^\perp$. Therefore, by Proposition~\ref{prop:H}(d) the nonlinearity $f$ satisfies condition (iii) of Assumption A with $\bar{\lambda}=\lambda_{p,s}$ and $\phi_p=\phi_{p,s}$, for each $s\in(-s_0,s_0)\setminus\{0\}$. This concludes the proof of
the theorem upon observing that all the smallness
conditions that we impose on~$s$ (and hence on $|\lambda_{p,s}-\lambda_0|$) are
uniform in $p\in M$ by the compactness of~$M$.
\end{proof}

\section{Proof of Theorem~\ref{th:main}}\label{sec:setup}

Our objective in this section is to show that any nonlinearity for
which Assumption~A holds satisfies the conclusions of
Theorem~\ref{th:main}:

\begin{theorem}\label{th:main2}
Let $M$ be a compact Riemannian manifold, and $f$ any function in
$C^{1,\al}\loc(M\times\RR)$ that satisfies Assumption~A. Then for every small enough positive~$\ep$ there exists a domain $\Om\subset M$ with volume equal to $|B_\ep|$ and a positive constant $\la$, which is of order $\ep^{-2}$, such that the overdetermined problem~\eqref{eq:onp}
admits a positive solution. The domain $\Omega$ is
$\C^{2,\alpha}$-small perturbation of a geodesic ball of
radius~$\ep$.
\end{theorem}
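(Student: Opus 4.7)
The plan is to implement the Pacard--Sicbaldi bifurcation scheme, using Assumption~A as the analytic input that supplies the correct Fredholm framework. Given $p\in M$ and $\ep>0$, I would work in geodesic normal coordinates centered at~$p$ and set $\la=\ep^{-2}$. A candidate domain $\Om=B^g_{\ep(1+v)}(p)$, with $v\in\C^{2,\al}\avg(S^{n-1})$ small, pulls back under the dilation $x\mapsto\ep x$ to a perturbed Euclidean ball $B_{1+v}\subset\R^n$, and the equation $\De_g u+\ep^{-2}f(\cdot,u)=0$ becomes $\De_x\tu+f(p,\tu)+\ep\,R(\ep,x;\tu,\nabla\tu)=0$, where $R$ collects the curvature corrections to the flat Laplacian and the Taylor expansion of $f(\cdot,\tu)$ around~$p$. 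At $\ep=0$ this reduces exactly to the flat profile equation satisfied by~$\phi_p$. Since item~(ii) of Assumption~A guarantees that $\De+f_z(p,\phi_p)$ is an isomorphism on $\C^{2,\al}\Dir(B_1)$, the implicit function theorem yields, for small $\ep$ and small $v$, a unique positive solution $\tu_{\ep,p,v}$ of the Dirichlet problem on $B_{1+v}$, depending smoothly on $(\ep,p,v)$ and reducing to $\phi_p$ when $\ep=0$ and $v=0$.

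Next I would introduce the Neumann defect
\[
\cal{F}(\ep,p,v):=\bigl(\pd_\nu\tu_{\ep,p,v}\bigr)\circ\Phi_v-\langle\pd_\nu\tu_{\ep,p,v}\rangle,
\]
viewed as a smooth map into $\C^{1,\al}\avg(S^{n-1})$, where $\Phi_v\colon S^{n-1}\to\pd B_{1+v}$ is the radial projection and $\langle\cdot\rangle$ denotes the spherical average. Zeros of $\cal{F}$ correspond to solutions of the overdetermined problem. The key computation is the $v$-linearization at $(\ep,p,v)=(0,p,0)$: using that $\phi_p$ is radial and letting $c_1=\pd_r\phi_p(1)$, $c_2=\pd_r^2\phi_p(1)$, a direct calculation identifies $D_v\cal{F}(0,p,0)$ with the operator $H_p$ of Proposition~\ref{prop:H}. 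By item~(iii) of Assumption~A combined with Proposition~\ref{prop:H}(d), one has $\ker H_p=V_1$ and the quantitative bound $\|H_p w\|_{\C^{1,\al}}\geq c_p\|w\|_{\C^{2,\al}}$ on $(V_0\oplus V_1)^\perp$. Projecting $\cal{F}$ onto $(V_0\oplus V_1)^\perp$ and fixing the $V_0$-direction by the volume constraint $|\Om|=|B_\ep|$, the implicit function theorem produces, for each small $\ep>0$ and each $p\in M$, a unique $v_{\ep,p}\in(V_0\oplus V_1)^\perp\cap\C^{2,\al}\avg(S^{n-1})$ such that the remaining defect lies in $V_1\cong T_pM$; this residue is the vector $a_{\ep,p}\in T_pM$.

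The variational step identifies the zeros of $a_{\ep,p}$ with the critical points of
\[
\cal{J}_\ep(p):=\int_{\Om_{\ep,p}}\Bigl(\tfrac12|\nabla_g u_{\ep,p}|_g^2-\ep^{-2}F(\cdot,u_{\ep,p})\Bigr)\,dV_g,
\]
where $F(p,z):=\int_0^z f(p,\ze)\,d\ze$ and $u_{\ep,p}$, $\Om_{\ep,p}$ denote the unrescaled solution and domain corresponding to $v=v_{\ep,p}$. Differentiating $\cal{J}_\ep$ along a variation of~$p$ with velocity $Y\in T_pM$, the interior contributions cancel because $u_{\ep,p}$ solves the PDE and vanishes on~$\pd\Om_{\ep,p}$, leaving a Hadamard boundary integral $\tfrac12\int_{\pd\Om_{\ep,p}}(\pd_\nu u_{\ep,p})^2\langle Y,\nu\rangle\,d\si_g$. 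Because $\pd_\nu u_{\ep,p}$ is a constant plus a $V_1$-component encoding $a_{\ep,p}$, and the constant part pairs trivially with $\langle Y,\nu\rangle$, this integral is a nondegenerate bilinear pairing of $a_{\ep,p}$ with $Y$. Hence $d\cal{J}_\ep(p)=0$ if and only if $a_{\ep,p}=0$, and the compactness of~$M$ supplies at least one critical point, in fact at least $\mathrm{cat}(M)$ of them by Lusternik--Shnirelmann.

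The main obstacle is the linearization step: one must show that the $\ep$-expansion of $\cal{F}$ is rigorous in H\"older norms uniformly in~$p$, that $D_v\cal{F}(0,p,0)$ is genuinely $H_p$ (which is where Proposition~\ref{prop:H} and the nonradial ODEs~\eqref{eq:a_j} enter the picture), and that the lower bound of Proposition~\ref{prop:H}(d) is uniform in $p\in M$. This last uniformity follows from the continuous dependence of $H_p$ on~$p$ and the compactness of~$M$, so Assumption~A being pointwise is enough: the implicit function theorem closes with estimates independent of the base point, and the Lusternik--Shnirelmann argument then delivers the desired domain of volume $|B_\ep|$ on which the overdetermined problem~\eqref{eq:onp} is solved with $\la$ of order~$\ep^{-2}$.
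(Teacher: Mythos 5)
Your proposal follows essentially the same route as the paper: rescaling plus the implicit function theorem (via Assumption~A(ii)) for the Dirichlet problem, the Neumann defect operator whose linearization at $\ep=0$ is $H_p$ with kernel $V_1$ handled by leaving a residual vector $a_{\ep,p}\in T_pM$, and the variational/Lusternik--Shnirelmann argument identifying zeros of $a_{\ep,p}$ with critical points of $\cal{J}_\ep$. The only cosmetic deviations are that you normalize $\bar v\in(V_0\oplus V_1)^\perp$ where the paper instead fixes the $V_1$-ambiguity by requiring the center of mass of $\pd\Om$ to be~$p$, and your final ``nondegenerate pairing'' step is carried out in the paper as a quantitative estimate (using $|b_{\ep,p}|\geq C>0$, $a_{\ep,p}\to0$ and the $O(\ep)$ control of the deformation field) to absorb the quadratic term $\langle a_{\ep,p},\cdot\rangle^2$; neither changes the substance.
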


In particular, this implies
Theorem~\ref{th:main} by virtue of Theorems~\ref{T.ex1}
and~\ref{T.ex2}. 

The proof consists of three steps. As these steps are quite involved,
we will include several auxiliary
propositions. Just as in the rest of the proof, in
these propositions we assume without further mention that Assumption~A
holds with $\bar{\lambda}=1$ (see~Remark~\ref{rem:bar_lambda}). Many
of the arguments we will use in this section are modeled on ideas due
to Pacard--Sicbaldi~\cite{PS:AIF} and Delay--Sicbaldi~\cite{DS:DCDS}.

\subsection*{Step 1: Analysis of the rescaled equation with
  Dirichlet boundary conditions}

Using the notation~\eqref{ball}, the solution domains that we will
construct can be described as
$\Om=B^g_{\epsilon(1+v)}(p)$, where $v\colon T_p^1M\to(-1,\infty)$ is
a function on the unit sphere of the tangent space at~$p$ of class
$\C^{2,\alpha}(T_p^1M)$, $\epsilon>0$, and $p\in M$. We will decompose
$v$ as a sum $v=v_0+\bar{v}$ of a constant $v_0$ and a function
$\bar{v}\in \C^{2,\alpha}\avg(T_p^1M)$. But, for the sake of convenience, instead of considering a domain that depends on the function $v=v_0+\bar{v}$, we will work on a fixed domain, namely on the unit ball $B_1$ of $\R^n$, endowed with a metric that depends on $\epsilon$ and $v$. 

In order to do that, we introduce some notation. Fixed a point $p\in
M$, let $T_\epsilon$ be the homothety of $T_p M$ (which is
diffeomorphic to $\R^n$) given by $T_\epsilon(y)=\epsilon y$, $y\in
T_pM$. Given $v\in\C^{2,\alpha}(T_p^1M)$, we consider the open subset 
\[
B_{1+v}:=\{x\in T_pM: |x|<1+v(x/|x|)\}
\]
of $T_p M$ and the parametrization $\beta\colon B_1\to B_{1+v}$ given by
\begin{equation}\label{eq:beta}
\beta(y):=\biggl(1+v_0+\chi(y)\bar{v}\biggl(\frac{y}{|y|}\biggr)\biggr)y,
\end{equation}
where $\chi$ is a fixed radial cutoff function that vanishes for $|y|\leq 1/2$ and is equal to $1$ for $|y|\geq 3/4$. Then, we define $Y\colon B_1\to B^g_{\epsilon(1+v)}$ as the composition 
\begin{equation}\label{eq:Y}
Y:=\exp^g_p\,\circ \,T_\epsilon\circ \beta.
\end{equation} 
Of course, $\beta$ and $Y$ depend on $p$, $\epsilon$ and $v$,  but we
remove this dependence from the notation for the sake of
simplicity. Note that $Y(B_1)=\{p\}$ if $\epsilon=0$, whereas if
$\epsilon>0$ and  $\epsilon\|1+v\|_{L^\infty}$ is less than the
injectivity radius~$R$ of the manifold at~$p$, then $Y$ is a diffeomorphism. 

Let $x=(x^1,\dots,x^n)$ be normal coordinates in $B_R^g(p)$ around
$p$, and let $g_{ij}$ be the components of the metric $g$ in these
coordinates. Then, for each $\epsilon\in[0,R/2]$, we consider the
metric $\bar{g}$ whose entries in the coordinates $x=(x^1,\dots,x^n)$
are 
\[
\bar{g}_{ij}(x):=g_{ij}(\epsilon x)\,.
\]
Note that this metric is smooth in the neighborhood $B_2$ of $B_1$ in
$T_pM$, where $B_r$ is the ball of radius $r$ in $T_pM$ with respect
to the metric~$g_p$ at the point~$p$. Equivalently,
$\bar{g}$ can be defined in a coordinate-independent fashion as
\[
\bar{g}=\epsilon^{-2}(\exp_p^g\,\circ
\,T_\epsilon)^*(g\rvert_{B^g_R(p)})\,.
\]
Hence, $\bar{g}$ is a metric (on an open subset of $T_p M$) that is
homothetic to the pullback metric $(\exp_p^g\,\circ\,
T_\epsilon)^*g$. Moreover, in the limit $\ep\to0$ the metric $\bar{g}$
tends to the Euclidean metric $g_0$, so we will define $\bar{g}|_{\ep=0}$
this way.

For each $p$, $\epsilon$ and $v$ such that
$\epsilon\|1+v\|_{L^\infty}<R$, we define the metric 
\[
\hg:=\beta^*\bar{g}=\epsilon^{-2}Y^*g
\]
on the ball $B_1\subset T_pM$. Given a nonlinearity
$f\in\C^{1,\alpha}_{\mathrm{loc}}(M\times\R)$, we consider the
function $\hf\in \C^{1,\alpha}_{\mathrm{loc}}(B_1\times \R)$ such that
\[
\hf(y,z)=f(Y(y),z)
\]
for $y\in B_1$ and $z\in\R$. In other words, $\hf(y,z)$ is just the
expression of the function $f(p,z)$ in the local coordinates~$y$.
 
Here and henceforth, by taking coordinate charts in $M$, we identify $T_pM$ with $\R^n$, the unit tangent sphere $T_p^1M$ of $T_pM$ with the unit sphere $S^{n-1}$ of $\R^n$, and the unit ball of $T_pM$ (with respect to the inner product $g_p$) with $B_1\subset\R^n$.

\begin{proposition}\label{prop:implicit}
There exists $\delta_0>0$ such that for each positive $\de<\de_0$,
$p\in M$, $\epsilon\in[0,\de)$, and
$\bar{v}\in\C^{2,\alpha}\avg(T_p^1M)$ with
$\|\bar{v}\|_{\C^{2,\alpha}(T_p^1M)}<\delta$, there exist a unique
constant $v_0=v_0(p,\epsilon,\bar{v})\in (-C\de,C\de)$ and a unique positive function $\hu=\hu(p,\epsilon,\bar{v})\in\C^{2,\alpha}(B_1)$ with $\|\hu-\phi_p\|_{\C^{2,\alpha}(B_1)}<C\de$, and such that
\begin{equation}\label{eq:hat_vol}
\vol_{\hg}(B_1)=|B_1|,
\end{equation}
 with $v:=v_0+\bar{v}$, and 
\begin{equation}\label{eq:hat_dirichlet}
\left\{
\begin{array}{rcll}
\Delta_{\hg} \hu+\hf(\cdot,\hu)&=&0 & \text{in } B_1
\\
\hu&=&0 & \text{on } \partial B_1.
\end{array}
\right.
\end{equation}
Here $C$ is a uniform constant. Moreover, $\hu$ and $v_0$ depend smoothly on $p$, $\epsilon$ and $\bar{v}$. If $\epsilon=0$ and $\bar{v}=0$, then $u=\phi_p$ and $v_0=0$.
\end{proposition}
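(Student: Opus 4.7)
The natural approach is to apply the implicit function theorem in Banach spaces, jointly in the two unknowns $v_0$ and $\hu$, with $(p,\ep,\bar v)$ treated as parameters. Fix a normal-coordinate chart and identify $T_p^1 M$ with $S^{n-1}$ via a local orthonormal frame (so $\C^{2,\al}\avg(T_p^1 M)$ becomes the fixed Banach space $\C^{2,\al}\avg(S^{n-1})$), and define
\[
\mathcal{F}(p,\ep,\bar v,v_0,\hu):=\bigl(\vol_{\hg}(B_1)-|B_1|,\ \De_{\hg}\hu+\hf(\cdot,\hu)\bigr),
\]
as a map into $\RR\times\C^{0,\al}(B_1)$, with $\hu\in\C^{2,\al}\Dir(B_1)$. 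All the building blocks---the parametrization $\be$ of~\eqref{eq:beta}, the rescaled metric $\hg$, and the nonlinearity $\hf$---depend smoothly on their arguments including at $\ep=0$, thanks to the cutoff $\chi$ in the definition of $\be$, and the statement reduces to solving $\mathcal{F}=0$ for $(v_0,\hu)$ as a smooth function of $(p,\ep,\bar v)$ near $(p,0,0)$, with the bounds coming from the Lipschitz form of the IFT.

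At the base point $\ep=0$, $\bar v=0$, $v_0=0$, $\hu=\phi_p$, the metric $\bar g$ reduces to the Euclidean metric $g_0$ and $\be$ to the identity, so $\hg=g_0$ and $\hf(\cdot,z)=f(p,z)$. Both components of $\mathcal{F}$ then vanish: the first because $\vol_{g_0}(B_1)=|B_1|$, and the second because $\phi_p$ solves the Dirichlet problem in Assumption~A(i) (with $\bar\la=1$, as per Remark~\ref{rem:bar_lambda}). The partial derivative with respect to the unknowns has block-triangular form
\[
D_{(v_0,\hu)}\mathcal{F}\Big|_{\text{base}} =\begin{pmatrix} n|B_1| & 0 \\ \ast & \De+f_z(p,\phi_p)\end{pmatrix},
\]
the $(1,2)$-entry vanishing because the volume is independent of $\hu$, the $(1,1)$-entry being the derivative of $|B_{1+v_0}|=(1+v_0)^n|B_1|$ at $v_0=0$, and the $(2,2)$-entry being the isomorphism $\C^{2,\al}\Dir(B_1)\to\C^{0,\al}(B_1)$ granted by Assumption~A(ii). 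Hence $D_{(v_0,\hu)}\mathcal{F}$ is an isomorphism and the IFT yields unique smooth functions $v_0(p,\ep,\bar v)$ and $\hu(p,\ep,\bar v)$ with the stated bounds, agreeing at $\ep=0$, $\bar v=0$ with $0$ and $\phi_p$ respectively.

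To finish, positivity of $\hu$ follows by perturbation: Assumption~A(i) gives $\phi_p>0$ in $B_1$ together with $\pd_\nu\phi_p|_{\pd B_1}<0$, so the closeness $\|\hu-\phi_p\|_{\C^{2,\al}}<C\de$ forces $\hu>0$ in the interior and $\pd_\nu\hu|_{\pd B_1}<0$ for $\de$ small, hence $\hu>0$ throughout the open ball. Uniformity of $\de_0$ in $p\in M$ is obtained by covering $M$ by finitely many normal-coordinate charts and using compactness, together with the continuous dependence of $\phi_p$ and of the inverse of $\De+f_z(p,\phi_p)$ on $p$.

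The only real subtlety is bookkeeping: one has to verify that the maps $(p,\ep,\bar v,v_0)\mapsto\hg$ and $(p,\ep,\bar v,v_0,\hu)\mapsto\hf(\cdot,\hu)$ extend smoothly across $\ep=0$ as maps into the appropriate Banach spaces, which is where the cutoff~$\chi$ is essential (to prevent a coordinate singularity of radial displacements at the origin). Once this smoothness is established, the proof reduces to an entirely standard IFT argument.
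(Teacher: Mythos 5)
Your proposal is correct and follows essentially the same route as the paper: a joint implicit function theorem argument in $(v_0,\hu)$, using that the differential at $(\ep,\bar v,v_0,\hu)=(0,0,0,\phi_p)$ is triangular with diagonal entries $n|B_1|$ and the isomorphism $\De+f_z(p,\phi_p)$ from Assumption~A(ii), followed by positivity via $\phi_p>0$, $\pd_\nu\phi_p|_{\pd B_1}<0$ and uniformity in $p$ by compactness of~$M$. The paper computes the same block structure (with the components in the opposite order, via $D_\psi\cal N=(\De+f_z(p,\phi_p),0)$ and $D_{v_0}\cal N=(2f(p,\phi_p),n|B_1|)$), so the two arguments coincide in substance.
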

\begin{proof}
Observe that if $\epsilon=0$ and $\bar{v}=0$, then $\hg=(1+v_0)^2 g_0$, $dV_{\hg}=(1+v_0)^{n}dV$, where $g_0$ is the Euclidean metric, $dV$ is the Euclidean volume form, and $\hf(y,z)=f(p,z)$, for $y\in B_1$, $z\in\R$; in particular, we have that $\Delta_{\hg}\hu=(1+v_0)^{-2}\Delta \hu$. Thus, if $\epsilon=0$ and $\bar{v}=0$, by Assumption~A a solution to~\eqref{eq:hat_dirichlet} satisfying \eqref{eq:hat_vol} is given by $\hu=\phi_p$ and $v_0=0$.  

Let us recall that $\C^{2,\alpha}\Dir(B_1)$ is the subspace of functions in $\C^{2,\alpha}(B_1)$ that vanish on the boundary of $B_1$. Let $\cal{U}$ be an open neighborhood of $(0,0,0)$ in $[0,\infty)\times\C^{2,\alpha}\avg(S^{n-1})\times\R$ such that $\epsilon(1+\bar{v}+v_0)$ is less than the injectivity radius of $(M,g)$, for all $(\epsilon,\bar{v},v_0)\in\cal{U}$. Then the map 
\[
\begin{array}{rccl}
\cal{N}\colon & M\times \cal{U}\times \C^{2,\alpha}\Dir(B_1) & \to & \C^{0,\alpha}(B_1)\times\R
\\
&(p,\epsilon,\bar{v},v_0,\psi)& \mapsto & (\Delta_{\hg}\psi+\hf(\cdot,\psi),\vol_{\hg}(B_1)-|B_1|),
\end{array}
\]
where $|\cdot|$ denotes the Euclidean volume, is smooth. Since $\hg=(1+v_0)^2 g_0$ and $\Delta_{\hg}\psi=(1+v_0)^{-2}\Delta \psi$ for $\epsilon=0$ and $\bar{v}=0$, we have
\begin{align*}
\cal{N}(p,0,0,0,\psi)&=(\Delta\psi+f(p,\psi),0),
\\
\cal{N}(p,0,0,v_0,\phi_p)&=\bigl((1+v_0)^{-2}\Delta\phi_p+f(p,\phi_p),((1+v_0)^n-1)|B_1|\bigr),
\end{align*}
for every $p\in M$, $\psi\in\C^{2,\alpha}\Dir(B_1)$ and $v_0\in\R$ close enough to zero. Thus, for all $p\in M$, we have
\begin{align*}
D_\psi\cal{N}(p,0,0,0,\phi_p)&=(\Delta +f_z(p,\phi_p),0),
\\
D_{v_0}\cal{N}(p,0,0,0,\phi_p)&=(2f(p,\phi_p),n|B_1|).
\end{align*}
Therefore, since by Assumption~A the operator
\[
\Delta +f_z(p,\phi_p)\colon\C^{2,\alpha}\Dir(B_1)\to\C^{0,\alpha}(B_1)
\]
is invertible, the partial differential
\[
D_{(v_0,\psi)}\cal{N}(p,0,0,0,\phi_p)\colon\R\times
\C^{2,\alpha}\Dir(B_1)\to\C^{0,\alpha}(B_1)\times\R
\]
is also invertible, for every $p\in M$. Thus, for any fixed $p\in M$,
the implicit function theorem guarantees that, for each
$(q,\epsilon,\bar{v})$ in a neighborhood of $(p,0,0)$ in
$M\times[0,\infty)\times\C^{2,\alpha}\avg(S^{n-1})$, there exists a
unique $(v_0,\hu)$ in a neighborhood of $(0,\phi_p)$ in $\R\times
\C^{2,\alpha}\Dir(B_1)$ such that $\cal{N}(q,\epsilon,
\bar{v},v_0,\hu)=(0,0)$. Moreover, since $\phi_p>0$ and $\partial_\nu \phi_p\rvert_{\partial B_1}<0$, maybe taking
a smaller neighborhood we have that $\hu>0$. Hence, if $\de$ is a
small enough positive constant, which, by the compactness of~$M$, means
that $\de$ be smaller than some positive constant~$\de_0$ independent
of the point~$p\in M$, then for each positive constant $\ep<\de$ and each $\bar{v}$ with $\C^{2,\alpha}$-norm
smaller than $\delta$,  there exist unique $v_0$ in a neighborhood of~$0$
and $\hu\in\C^{2,\alpha}\Dir(B_1)$ in a neighborhood of $\phi_p$ satisfying
\eqref{eq:hat_vol} and \eqref{eq:hat_dirichlet}. Furthermore,
\[
|v_0|+\|\hu-\phi_p\|_{\C^{2,\alpha}(B_1)}<C\de
\]
with some uniform constant~$C$.
This completes the
proof.
\end{proof}

\subsection*{Step 2: Analyzing when the normal derivative is constant}

Let $\omega_n:=|S^{n-1}|$ be the area of the $(n-1)$-dimensional unit sphere of~$\R^n$. We consider the operator
\begin{equation}\label{eq:F}
\cal{F}(p,\epsilon,\bar{v}):=\hg(\nabla_{\hg}\hu,\nu_{\hg})\rvert_{\partial B_1}-\frac{1}{\omega_n}\int_{\partial B_1}\hg(\nabla_{\hg}\hu,\nu_{\hg})d\sigma,
\end{equation}
where $\nu_{\hg}$ is the outward unit normal vector field to $\partial
B_1$ with respect to the metric~$\hg$, $(v_0,\hu)$ is the solution
provided by Proposition~\ref{prop:implicit}, and $d\sigma$ is
the standard area measure on the unit sphere. As follows from the last
part of the proof of Proposition~\ref{prop:implicit}, $\cal{F}$ is
well-defined from a neighborhood of $M\times\{0\}\times\{0\}$ in
$M\times[0,\infty)\times\C_{m}^{2,\alpha}(S^{n-1})$ into
$\C\avg^{1,\alpha}(S^{n-1})$. Of course,
$\cal{F}(p,\epsilon,\bar{v})=0$ exactly when the normal derivative
$\hg(\nabla_{\hg}\hu,\nu_{\hg})\rvert_{\partial B_1}$ is constant.

Recall from the discussion after the proof of Theorem~\ref{T.ex1} in Section~\ref{S.ass} that for each $w\in\C^{2,\alpha}\avg(S^{n-1})$ we considered the unique (by Assumption~A) solution $\psi_w$~to
\begin{equation}\label{eq:psi_dirichlet2}
\left\{
\begin{array}{rcll}
\Delta\psi_w+f_z(p,\phi_p)\psi_w&=&0 & \text{in } B_1
\\
\psi_w&=&-c_1 w& \text{on } \partial B_1,
\end{array}
\right.
\end{equation}
where $c_1:=\partial_r\phi_p\rvert_{r=1}$ is a constant. The assignment $w\mapsto\psi_w$ allowed us to define the operator
\[
\begin{array}{rccl}
H_p\colon & \C^{2,\alpha}\avg(S^{n-1}) & \to & \C^{1,\alpha}\avg(S^{n-1})
\\
&w& \mapsto & (\partial_r\psi_w+c_2 w)\rvert_{\partial B_1},
\end{array}
\]
where $c_2:=\partial^2_r\phi_p\rvert_{r=1}$ is another
constant.

The reason for which we are interested in the operator $H_p$ is that it is the linearization of the operator $\cal{F}$ defined in \eqref{eq:F}, as we show below:

\begin{proposition}\label{prop:linearizationF}
The differential of $\cal{F}$ with respect to $\bar{v}$ at $\epsilon=0$ and $\bar{v}=0$ is $H_p$, i.e.\ $D_{\bar{v}} \cal{F}(p,0,0)=H_p$, for all $p\in M$. 
\end{proposition}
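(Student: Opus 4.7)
The plan is to exploit the simplification that occurs at $\epsilon=0$: in this case the rescaled background metric $\bar{g}$ equals the Euclidean metric $g_0$ on $T_pM$, so $\hg=\beta^*g_0$ and the map $\beta\colon (B_1,\hg)\to (B_{1+v},g_0)$ is an isometry. Consequently the Dirichlet problem~\eqref{eq:hat_dirichlet} transforms, via $u:=\hu\circ\beta^{-1}$, into the classical Dirichlet problem
\[
\Delta u+f(p,u)=0\ \text{in } B_{1+v},\qquad u|_{\partial B_{1+v}}=0,
\]
and the boundary functional satisfies
\[
\hg(\nabla_{\hg}\hu,\nu_{\hg})(\theta)=\partial_{\nu_0} u\bigl((1+v(\theta))\theta\bigr),\qquad \theta\in \partial B_1,
\]
where $\nu_0$ is the Euclidean outward normal to $\partial B_{1+v}$. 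This reduces the computation to a standard Hadamard-type domain perturbation in $\R^n$.

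Next, set $\bar{v}=tw$ with $w\in \C^{2,\alpha}\avg(S^{n-1})$ and let $v_0(t)$, $\hu_t$, $u_t$ be given by Proposition~\ref{prop:implicit}. Differentiating the volume constraint
\[
\frac{1}{n}\int_{S^{n-1}}\bigl(1+v_0(t)+tw(\theta)\bigr)^n\,d\sigma=|B_1|
\]
at $t=0$ and using $\int_{S^{n-1}}w\,d\sigma=0$ forces $\dot v_0(0)=0$, so $v(t)=tw+O(t^2)$. Writing $u_t=\phi_p+t\psi+O(t^2)$ on a slightly smaller ball and imposing the Dirichlet condition $u_t\bigl((1+tw(\theta))\theta\bigr)=0$, the Taylor expansion of $\phi_p$ at $r=1$ combined with $\phi_p|_{\partial B_1}=0$ yields the boundary condition $\psi|_{\partial B_1}=-c_1 w$, while the linearization of the equation forces $L_p\psi=0$ in $B_1$. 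By Assumption~A(ii) the function $\psi$ is uniquely determined and coincides with the $\psi_w$ introduced in~\eqref{eq:psi_dirichlet2}.

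The main computation is the expansion of the normal derivative on $\partial B_{1+tw}$. At the point $(1+tw(\theta))\theta$ the outward unit normal expands as $\nu_0=e_r-t\,\nabla_{S^{n-1}}w+O(t^2)$, and since $\nabla\phi_p=(\partial_r\phi_p)\,e_r$ is purely radial, the tangential correction to $\nu_0$ contributes only at order $t^2$. Taylor expanding $\partial_r\phi_p$ at $r=1$ and using $\psi=\psi_w+O(t)$ near the boundary, one obtains
\[
\partial_{\nu_0}u_t\bigl((1+tw(\theta))\theta\bigr)=c_1+t\bigl(c_2 w(\theta)+\partial_r\psi_w(1,\theta)\bigr)+O(t^2).
\]
Because $H_p(w)=c_2 w+\partial_r\psi_w|_{\partial B_1}$ has zero mean on $S^{n-1}$ by Proposition~\ref{prop:H}(a), subtracting the spherical average only removes the constant $c_1$ at leading order, so
\[
\cal{F}(p,0,tw)=t\,H_p(w)+O(t^2),
\]
which is exactly the identity $D_{\bar v}\cal{F}(p,0,0)(w)=H_p(w)$.

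The only mildly delicate point is justifying the uniform control on the remainders up to the boundary; this amounts to Schauder estimates applied to the equation satisfied by $\hu_t-\phi_p$ on the fixed ball $(B_1,\hg_t)$ together with the invertibility of $L_p$ supplied by Assumption~A, and is in fact already implicit in the implicit-function-theorem construction of Proposition~\ref{prop:implicit}.
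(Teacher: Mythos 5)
Your approach is, at heart, the same Hadamard-type linearization as the paper, but with a cleaner framing: you observe that at $\epsilon=0$ the map $\beta$ is an isometry from $(B_1,\hg)$ onto the Euclidean domain $(B_{1+v},g_0)$, so the calculation becomes a classical domain perturbation in $\R^n$, and then you expand $u_t$, the Dirichlet condition and the unit normal on the moving boundary $\partial B_{1+tw}$. The paper instead works entirely on the fixed ball $B_1$ with the pulled-back metric $\hg=\beta^*g_0$ and expands the metric and the normal in polar coordinates, arriving at the same expression $c_1+s(c_2w+\partial_r\psi_w)+O(s^2)$. The content is identical; your presentation is slightly more transparent.

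However, there is a real gap at the step where you ``Taylor expand $\phi_p$ at $r=1$'' to impose the Dirichlet condition $u_t\bigl((1+tw(\theta))\theta\bigr)=0$ and again when you Taylor expand $\partial_r\phi_p$ at the moving boundary. For $\theta$ with $w(\theta)>0$, the point $(1+tw(\theta))\theta$ lies \emph{outside} $\overline{B_1}$, where $\phi_p$ is not defined, and a one-sided $C^{2,\alpha}$-bound up to $\partial B_1$ does not by itself justify the extrapolated expansions nor the comparison $u_t=\phi_p+t\psi+O(t^2)$ near those boundary points. This is precisely the issue the paper addresses: it first assumes $f(p,\cdot)$ is real analytic, invokes the Cauchy--Kowalevski theorem to extend $\phi_p$ past $\partial B_1$ so that $\widehat{\phi}_p=\phi_p\circ\beta$ makes sense on all of $\overline{B_1}$, carries out the linearization, and then removes the analyticity assumption by approximating $f(p,\cdot)$ with analytic nonlinearities $f_\delta$ and passing to the limit (using the invertibility from Assumption~A to control the approximation via the implicit function theorem). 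Your closing remark about Schauder estimates and the implicit function theorem on the fixed ball is in the right direction (smooth dependence of $\hu_t$ in $\C^{2,\alpha}(\overline{B_1})$ is indeed the crucial input), but as written it does not actually legitimize the cross-boundary Taylor expansions of $\phi_p$. To close the gap without the analyticity trick you would need to either work systematically with the shape derivative on the fixed domain (differentiating the pulled-back operator $\Delta_{\hg_t}$ in $t$ and never extending $\phi_p$), or argue carefully with a $C^{2,\alpha}$ Whitney extension of $\phi_p$, showing the error terms introduced are genuinely $o(t)$.
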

\begin{proof}
Fix $p\in M$. We have to show that $\partial_s\rvert_{s=0} \cal{F}(p,0,sw)=H_p(w)$, for every $w\in \C^{2,\alpha}\avg(S^{n-1})$. 
For the moment, let us assume that $f(p,\cdot)$ is a real analytic
function of one variable.

We set $\bar{v}=s w$. For $\epsilon=0$ we have that $\bar{g}=g_0$ and $\hg=\beta^*g_0$. Then the solution $\hu$ given in Proposition~\ref{prop:implicit} satisfies
\[
\left\{
\begin{array}{rcll}
\Delta_{\hg}\hu+f(p,\hu)&=&0 & \text{in } B_1
\\
\hu&=&0 & \text{on } \partial B_1,
\end{array}
\right.
\]
and $\vol_{\hg} (B_1)=|B_1|$. Since we are assuming that $f(p,\cdot)$ is analytic, the Cauchy--Kowalevski theorem guarantees that the solution~$\phi_p$ can be extended to an open set containing the closed unit ball of $\R^n$, and it satisfies $\Delta \phi_p+f(p,\phi_p)=0$ therein. Hence, $\widehat{\phi}_p=\phi_p\circ \beta$ is a solution to $\Delta_{\hg}\widehat{\phi}_p+f(p,\widehat{\phi}_p)=0$ in $B_1$.

By setting $\widehat{\psi}=\hu-\widehat{\phi}_p$ we observe that
\begin{equation}\label{eq:psi_hat}
\left\{
\begin{array}{rcll}
\Delta_{\hg}\widehat{\psi}-f(p,\widehat{\phi}_p)+f(p,\widehat{\psi}+\widehat{\phi}_p)&=&0 & \text{in } B_1
\\
\widehat{\psi}&=&-\widehat{\phi}_p& \text{on } \partial B_1.
\end{array}
\right.
\end{equation}
By construction, $\widehat{\psi}$ and $v_0$ depend smoothly on $s$. Moreover, if $s=0$, then $\hg=g_0$, $v_0=0$, $\widehat{\phi}_p=\phi_p$ and $\widehat{\psi}=0$. Let $\dot{\psi}=\partial_s\widehat{\psi}\rvert_{s=0}$ and $\dot{v}_0=\partial_s v_0\rvert_{s=0}$. Taking derivatives in~\eqref{eq:psi_hat} with respect to $s$ and evaluating at $s=0$, we get
\begin{equation}\label{eq:psi_dot}
\left\{
\begin{array}{rcll}
\Delta \dot{\psi}+f_z(p,\phi_p)\dot{\psi}&=&0 & \text{in } B_1
\\
\dot{\psi}&=&-c_1(\dot{v}_0+w)& \text{on } \partial B_1,
\end{array}
\right.
\end{equation}
where $c_1=\partial_r\phi_p\rvert_{r=1}$. Now, by differentiating the relation $\vol_{\hg} (B_1)=|B_1|$ with respect to $s$ at $s=0$, we get
\begin{equation}\label{eq:deriv_v0}
0=\partial_s\rvert_{s=0}\vol_{\hg}(B_1)=\partial_s\rvert_{s=0}|B_{1+v}|=\int_{S^{n-1}}g_0(\Xi,\partial_r)d\sigma=\int_{S^{n-1}}(\dot{v}_0+w)d\sigma,
\end{equation}
where we have used the differentiation formula for moving regions~\cite[Appendix~C.4]{Evans}, and where $\Xi=(\dot{v}_0+w)\partial_r$ is the velocity field of the moving boundary $\partial B_{1+v}$ at $s=0$. Since $\int_{S^{n-1}}w\,d\sigma=0$, we deduce that $\dot{v}_0=0$.

Altogether, we have shown that $v_0=\cal{O}(s^2)$ and $\hu=\widehat{\phi}_p+s\psi_w+\cal{O}(s^2)$, where $\psi_w$ is the solution to~\eqref{eq:psi_dirichlet2}. Moreover, recalling from~\eqref{eq:beta} the definition of $\beta$, in $B_1\setminus B_{3/4}$ we can write
\begin{align}\label{eq:hat_u_expansion}
\hu(y)&=\phi_p\left(((1+sw(y/|y|))y\right)+s\psi_w(y)+\cal{O}(s^2)
\\ \nonumber
&=\phi_p(y)+s\left(w(y/|y|)r\partial_r\phi_p+\psi_w\right)+\cal{O}(s^2),
\end{align}
where $r=|y|$. Now, some calculations show that, in polar coordinates $y=rz$, $r>0$, $z\in S^{n-1}$, the metric $\hg=\beta^*g_0$ in $B_1\setminus B_{3/4}$ adopts the expression 
\[
\hg=(1+v_0+sw)^2dr^2+2s(1+v_0+sw)r\,dw\,dr+r^2(1+v_0+sw)^2 g_0\rvert_{S^{n-1}}+s^2r^2dw^2.
\]
Using this and the fact that $v_0=\cal{O}(s^2)$, one gets that the outward unit normal vector field to $\partial B_1$ with respect to the metric $\hg$ is given by
\[
\nu_{\hg}=(1-sw+\cal{O}(s^2))\partial_r+\cal{O}(s)\partial_{z_j},
\]
where $\partial_{z_j}$ are coordinate vector fields on $S^{n-1}$. This, together with~\eqref{eq:hat_u_expansion}, yields 
\[
\hg(\nabla_{\hg}\hu,\nu_{\hg})=\nu_{\hg}(\hu)=c_1+s(c_2w+\partial_r\psi_w)+\cal{O}(s^2)\qquad \text{on } \partial B_1,
\]
where $c_2=\partial_r^2\phi_p\rvert_{r=1}$. This concludes the proof in the case that $f(p,\cdot)$ is analytic.

If $f\in\C^{1,\alpha}\loc(M\times \R)$, let
$f_\delta\in\C^{\omega}(\R)$, $\delta\in(0,\delta_0)$, be a family of
analytic functions converging to $f_0:=f(p,\cdot)$ locally in the $\C^{1,\alpha}$-norm. Consider the smooth map
\[
\begin{array}{rccl}
\cal{N}_p\colon & [0,\delta_0)\times \C^{2,\alpha}\Dir(B_1) & \to & \C^{0,\alpha}(B_1)
\\
& (\delta,\psi) & \mapsto & \Delta \psi +f_\delta\circ\psi.
\end{array}
\]
Then $\cal{N}_p(0,\phi_p)=\Delta \phi_p+f(p,\phi_p)=0$, and
moreover 
$$
D_\psi\cal{N}_p(0,\phi_p)=\Delta+f_0'\circ\phi_p=\Delta+f_z(p,\phi_p)
$$
is invertible as an operator from $\C^{2,\alpha}_{\mathrm{Dir}}(B_1)$
to $\C^{0,\alpha}(B_1)$ by Assumption~A. Hence, the implicit function
theorem guarantees the existence of
$\phi_{p}^{\delta}\in\C^{2,\alpha}\Dir(B_1)$ smoothly depending on
$\delta\in[0,\delta_0)$, for a maybe smaller $\delta_0>0$, such that
$\Delta \phi_{p}^{\delta}+f_\delta\circ\phi_{p}^{\delta}=0$ in $B_1$,
and $\Delta +f_\delta'\circ\phi_{p}^{\delta}$ is
invertible. Therefore, the arguments above apply to each one of the
analytic nonlinearities $f_\delta$, $\delta\in(0,\delta_0)$, with
associated solutions $\phi_{p}^{\delta}\in\C^{2,\alpha}\Dir(B_1)$ to
$\Delta \phi_{p}^{\delta}+f_\delta\circ\phi_{p}^{\delta}=0$, and
associated operators $\cal{F}^\delta$ and $H_{p}^\delta$. Thus,
$D_{\bar{v}}\cal{F}^\delta(p,0,0)=H_{p}^{\delta}$ for each
$\delta\in(0,\delta_0)$. Since both $\cal{F}^\delta$ and $H_p^\delta$
depend continuously on $\delta$, we deduce that 
$$
D_{\bar{v}}\cal{F}(p,0,0)=\lim_{\delta\to
  0}D_{\bar{v}}\cal{F}^\delta(p,0,0)=\lim_{\delta\to
  0}H_p^\delta=H_p\,,
$$
which concludes the proof.
\end{proof}

Let us now recall that the first eigenspace of $-\De_{S^{n-1}}$,
$V_1$, is the restriction to the unit sphere of the space of linear
functions on $T_pM$, which one can write as
\[
\langle a,\cdot\rangle
\]
with $a\in T_pM$. Using this fact together with
Propositions~\ref{prop:linearizationF} and~\ref{prop:H}, combined with
some ideas about the center of mass, one can prove the following result via the implicit function theorem:

\begin{proposition} \label{prop:affine}
There exists $\epsilon_0>0$ such that, for each
$\epsilon\in[0,\epsilon_0)$ and each $p\in M$, there is a unique
$\bar{v}_{\epsilon,p}\in\C^{2,\alpha}\avg(T_p^1M)$ and a unique
$a_{\epsilon,p}\in T_pM$, both smoothly depending on $p$ and
$\epsilon$, such that
$\|\bar{v}_{\epsilon,p}\|_{\C^{2,\alpha}(T_p^1M)}<\epsilon_0$, 
\[
\cal{F}(p,\epsilon,\bar{v}_{\epsilon,p})+\langle
a_{\epsilon,p},\cdot\rangle=0 \qquad \text{ on }\partial B_1=S^{n-1},
\]
and the center of mass of $\partial B^g_{\epsilon(1+v_0+\bar{v})}(p)$
is~$p$. Moreover, $\bar{v}_{0,p}=0$, $a_{0,p}=0$, and the map
\[
p\in M\mapsto a_{\epsilon,p} \in T_pM
\]
defines a $C^{1,\alpha}$-vector field $a_\ep$ on $M$.
\end{proposition}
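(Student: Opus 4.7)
The plan is to apply the implicit function theorem to a combined map that encodes both the relation $\cal{F}(p,\epsilon,\bar v)+\langle a,\cdot\rangle=0$ and the center-of-mass condition. Fix $p_0\in M$ and, identifying $T_{p_0}^1M$ with $S^{n-1}$ via an orthonormal frame at~$p_0$, I would introduce the map
\[
\Psi_{p_0}\colon [0,\epsilon_0)\times\C^{2,\alpha}\avg(S^{n-1})\times T_{p_0}M\to \C^{1,\alpha}\avg(S^{n-1})\times T_{p_0}M
\]
defined by
\[
\Psi_{p_0}(\epsilon,\bar v,a)=\bigl(\cal{F}(p_0,\epsilon,\bar v)+\langle a,\cdot\rangle,\ \cal{C}_{p_0}(\epsilon,\bar v)\bigr),
\]
where, writing $v_0=v_0(p_0,\epsilon,\bar v)$ for the constant supplied by Proposition~\ref{prop:implicit}, the functional $\cal{C}_{p_0}\in T_{p_0}M$ is a suitably rescaled center-of-mass map---for instance $\cal{C}_{p_0}(\epsilon,\bar v):=\epsilon^{-n}\int_{\partial B^g_{\epsilon(1+v_0+\bar v)}(p_0)}\exp_{p_0}^{-1}(q)\,d\sigma_g(q)$, extended continuously to $\epsilon=0$. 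The vanishing of $\cal{C}_{p_0}$ is then equivalent to the first-order optimality condition characterizing the center of mass, i.e.\ to the requirement that said center of mass be $p_0$. Note that $\Psi_{p_0}(0,0,0)=(0,0)$, the second component vanishing because geodesic balls are centered at their own metric midpoint by symmetry.

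Next I would compute the linearization $L:=D_{(\bar v,a)}\Psi_{p_0}(0,0,0)$. By Proposition~\ref{prop:linearizationF}, the first component of $L$ is $(\dot v,\dot a)\mapsto H_{p_0}(\dot v)+\langle \dot a,\cdot\rangle$. For the second component, expanding the induced area form on the perturbed sphere and the map $\exp_{p_0}^{-1}$ in normal coordinates at~$p_0$, I expect to obtain
\[
\cal{C}_{p_0}(0,\bar v)=c_*\int_{S^{n-1}}\bar v(z)\,z\,d\sigma(z)+O(\|\bar v\|^2)
\]
for some nonzero constant $c_*$, so that $D_{\bar v}\cal{C}_{p_0}(0,0)=c_*\pi_1$, where $\pi_1\colon\C^{2,\alpha}\avg(S^{n-1})\to V_1\cong T_{p_0}M$ is the $L^2$-orthogonal projection onto the $(n-1)$-eigenspace of $-\De_{S^{n-1}}$, identified with $T_{p_0}M$ via $a\mapsto\langle a,\cdot\rangle|_{S^{n-1}}$. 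Then $L$ is an isomorphism: if $L(\dot v,\dot a)=0$ the second equation forces $\dot v\in (V_0\oplus V_1)^\perp$, and projecting the first equation onto $V_1$ yields $\dot a=0$ because $H_{p_0}$ preserves eigenspaces and kills $V_1$; the bound in Proposition~\ref{prop:H}(d) finally gives $\dot v=0$. Surjectivity is dual: decompose the target along $V_1\oplus(V_0\oplus V_1)^\perp$, pick $\dot a$ so that $\langle\dot a,\cdot\rangle$ matches the $V_1$-component, and invert $H_{p_0}$ eigenspace by eigenspace on the rest.

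Invoking the implicit function theorem in Banach spaces now yields, for each small $\epsilon\geq 0$ and each $p_0$ ranging in a local coordinate chart of $M$, unique $\bar v_{\epsilon,p_0}\in\C^{2,\alpha}\avg(S^{n-1})$ and $a_{\epsilon,p_0}\in T_{p_0}M$ near the origin solving $\Psi_{p_0}(\epsilon,\bar v_{\epsilon,p_0},a_{\epsilon,p_0})=(0,0)$, with $\bar v_{0,p_0}=0$ and $a_{0,p_0}=0$ by uniqueness. Uniform invertibility of $L$ across $M$ follows from compactness together with continuous dependence of $H_p$ and of the constant $c_p$ in Proposition~\ref{prop:H}(d) on $p$, so a single $\epsilon_0>0$ works for every $p_0$. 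The smooth dependence on $p_0$ provided by the implicit function theorem, pushed through any local trivialization of $TM$, then shows that $p\mapsto a_{\epsilon,p}$ is a $C^{1,\alpha}$-section of $TM$; the H\"older regularity reflects that of the nonlinearity~$f$.

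The main obstacle I anticipate is the computation $D_{\bar v}\cal{C}_{p_0}(0,0)=c_*\pi_1$ together with the correct rescaling of the center-of-mass functional that makes this derivative both well-defined and nonvanishing. Carrying it out requires expanding $\exp_{p_0}^{-1}$ and the induced area form on $\partial B^g_{\epsilon(1+v_0+\bar v)}(p_0)$ to leading order in $\epsilon$, isolating the first spherical-harmonic mode of the resulting integral, and verifying the nonvanishing of its coefficient. This is exactly the geometric input that absorbs the $V_1$-kernel of $H_{p_0}$ identified in Proposition~\ref{prop:H} and makes the combined linearization invertible.
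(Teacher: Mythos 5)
Your proposal follows essentially the same strategy as the paper, namely combining the failure-of-constancy operator $\cal{F}$ with a center-of-mass functional and applying the implicit function theorem, with the linearization argument relying on Propositions~\ref{prop:linearizationF} and~\ref{prop:H} in exactly the way the paper does. The packaging is slightly different: you include $a\in T_{p_0}M$ as an independent variable and keep the full equation $\cal{F}+\langle a,\cdot\rangle=0$, whereas the paper applies the projection $Q=\Id-\Pi_1$ to $\cal{F}$, solves the projected system for $\bar v$ alone by the implicit function theorem, and only afterwards defines $a_{\epsilon,p}$ as the unique element with $\Pi_1(\cal{F}(p,\epsilon,\bar{v}_{\epsilon,p}))+\langle a_{\epsilon,p},\cdot\rangle=0$. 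Both encodings are equivalent and your verification that the joint linearization $(\dot v,\dot a)\mapsto\bigl(H_{p_0}\dot v+\langle\dot a,\cdot\rangle,\,c_*\pi_1\dot v\bigr)$ is an isomorphism is correct, using that $H_{p_0}$ preserves eigenspaces, $\ker H_{p_0}=V_1$, and the lower bound of Proposition~\ref{prop:H}(d).

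The one real gap, which you yourself flag, is the computation $D_{\bar v}\cal{C}_{p_0}(0,0)=c_*\pi_1$ and the precise meaning of the center-of-mass condition. The paper does not recompute this from scratch; it invokes Nardulli~\cite{Na:agag}, Lemma~2.7, which supplies a smooth map $A$ with $(\exp^g_p)^{-1}(c(p,\epsilon,v))=\epsilon A(p,\epsilon,v)$ and the explicit formula~\eqref{eq:A} at $\epsilon=0$, from which the derivative~\eqref{eq:dA}, namely $\frac{n}{\omega_n}\int_{S^{n-1}}\bar v(z)z\,dz$, is obtained. Two points must be handled carefully if you wish to close your gap directly rather than cite~\cite{Na:agag}: first, you must account for the dependence of $v_0=v_0(p,\epsilon,\bar v)$ on $\bar v$ -- the paper uses $v_0(p,0,t\bar v)=\cal{O}(t^2)$, a consequence of~\eqref{eq:deriv_v0}, to discard its contribution to the linearization; second, your functional $\cal{C}_{p_0}$ is the first-order criticality condition for the center-of-mass energy, and equating its vanishing with $p_0$ being the center of mass (defined as the unique minimizer) requires a short convexity argument valid for small domains, which~\cite{Na:agag} also supplies. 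With those two points settled, your argument would be complete and equivalent to the paper's.
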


\begin{proof}
	In~\cite[Lemma~2.7]{Na:agag} it was proved that there exists a smooth map $A$ such that $(\exp^{g}_p)^{-1}(c(p,\epsilon,v))=\epsilon A(p,\epsilon,v)$, where $c(p,\epsilon,v)$ denotes the center of mass of $\partial B^g_{\epsilon(1+v)}(p)$ as defined in the Introduction, and
	\begin{equation}\label{eq:A}
	A(p,0,v)=\frac{\int_{S^{n-1}}(1+v(z))^{n-1}z\sqrt{|\nabla v(z)|^2+(1+v(z))^2}\,dz}{\int_{S^{n-1}}(1+v(z))^{n-2}\sqrt{|\nabla v(z)|^2+(1+v(z))^2}\,dz}.
	\end{equation}
		
	Define the operator $Q=\Id -\Pi_1$, where $\Pi_1$ is the orthogonal projection of $L^2(S^{n-1})$ onto the first eigenspace $V_1$ of the spherical Laplacian $\Delta_{S^{n-1}}$. We consider the smooth map 
	\[
	\bar{\cal{F}}(p,\epsilon, \bar{v})=(A(p,\epsilon,\bar{v}+v_0(p,\epsilon,\bar{v})),Q \cal{F}(p,\epsilon,\bar{v}))
	\]
	defined from a neighborhood of $M\times\{(0,0)\}$ in $M\times [0,\infty)\times \C^{2,\alpha}\avg(S^{n-1})$ into a neighborhood of $(0,0)$ in $\R^n\times(\C^{1,\alpha}\avg(S^{n-1})\cap V_1^\perp)$. Using~\eqref{eq:A}, by considering the Taylor expansions up to order one of the numerator and denominator in $A(p,0,t\bar{v}+v_0(p,0,t\bar{v}))$ (cf.~\cite[Lemma~2.9]{Na:agag}) and taking into account that $v_0(p,0,t\bar{v})=\cal{O}(t^2)$ (as follows from~\eqref{eq:deriv_v0}), one easily shows that 
	\begin{equation}\label{eq:dA}
	\left.\frac{d}{dt}\right\vert_{t=0}A(p,0,t\bar{v}+v_0(p,0,t\bar{v}))=\frac{n}{\omega_{n}}\int_{S^{n-1}}\bar{v}(z)zdz.
	\end{equation}
	Hence, the differential of the second component of
        $\bar{\cal{F}}$ with respect to $\bar{v}$ at the point
        $(p,0,0)$ has image $\R^n$ and kernel contained in
        $V_1^\perp$. Moreover, we have that
        $\bar{\cal{F}}(p,0,0)=(0,0)$ and, according to
        Proposition~\ref{prop:linearizationF}, the differential of the
        second component of $\bar{\cal{F}}$ with respect to $\bar{v}$
        at $(p,0,0)$ is $Q  H_p$, whose kernel is $V_1$, and whose
        image is $\C^{1,\alpha}\avg(S^{n-1})\cap V_1^\perp$, by
        Proposition~\ref{prop:H}. Altogether we deduce that the
        differential $D_{\bar{v}}\bar{\cal{F}}(p,0,0)$ is
        invertible. Hence, the implicit function theorem and the
        compactness of $M$ guarantee the existence, for each $p\in M$
        and for all~$\epsilon$ smaller than certain~$\epsilon_0$, of a
        unique $\bar{v}_{\epsilon,p}$, smoothly depending on $p$ and
        $\epsilon$, with
        $\|\bar{v}_{\epsilon,p}\|_{\C^{2,\alpha}\avg(T_p^1M)}<\epsilon_0$,
        such that
        $\bar{\cal{F}}(p,\epsilon,\bar{v}_{\epsilon,p})=(0,0)$. Taking
        the unique $a_{\epsilon,p}\in T_pM$ such that
        $\Pi_1(\cal{F}(p,\epsilon,\bar{v}_{\epsilon,p}))+\langle
        a_{\epsilon,p},\cdot\rangle=0$, where
        $\langle\cdot,\cdot\rangle$ is the inner product on $T_pM$
        determined by $g$, the result follows.
\end{proof}

% \section{Geometric variational argument and proofs of Theorems~\ref{th:main} and~\ref{th:main}}\label{sec:variational}

\subsection*{Step 3: Existence of zeros of the vector field
  $a_{\epsilon}$ and conclusion of the proof}

In the previous section we have constructed, for each $\epsilon\in(0,\epsilon_0)$ and for each $p\in M$, a function $v_{\epsilon,p}=v_0(\epsilon,p,\bar{v}_{\epsilon,p})+\bar{v}_{\epsilon,p}\in\C^{2,\alpha}(T_pM)$, a vector $a_{\epsilon,p}\in T_pM$ and a function $\hu_{\epsilon,p}\in\C^{2,\alpha}\Dir(B_1)$ such that the center of mass of $\partial B^g_{\epsilon(1+v_{\epsilon,p})}(p)$ is~$p$,  $\vol_{\hg}(B_1)=|B_1|$, and
\begin{equation}\label{eq:onp_affineB1}
\left\{
\begin{array}{rcll}
\Delta_{\hg} \hu_{\epsilon,p}+\hf(\cdot,\hu_{\epsilon,p})&=&0 & \text{in } B_1
\\
\hu_{\epsilon,p}&>&0 & \text{in } B_1
\\
\hu_{\epsilon,p}&=&0 & \text{on } \partial B_1
\\
\hg(\nabla_{\hg}\hu_{\epsilon,p},\nu_{\hg}) &=& \text{const.}-\langle a_{\epsilon,p},\cdot\rangle& \text{on } \partial B_1.
\end{array}
\right.
\end{equation}

For each $\epsilon>0$ we define the homothetic metric
$g_\epsilon=\epsilon^{-2}g$ on $M$. Then, for each $p\in M$ and
$\epsilon\in(0,\epsilon_0)$, we consider the $\C^{2,\alpha}$-domain $$
\Omega_{\epsilon,p}:=B_{\epsilon(1+v_{\epsilon,p})}^g(p)=B_{1+v_{\epsilon,p}}^{g_\epsilon}(p)
$$
of $M$, which is centered at $p$. Using the parametrization $Y\colon B_1\to \Omega_{\epsilon,p}$ introduced in~\eqref{eq:Y}, we can define the function $u_{\epsilon,p}\in\C^{2,\alpha}\Dir(\Omega_{\epsilon,p})$ by means of the relation $\hu_{\epsilon,p}=:u_{\epsilon,p}\circ Y$. Recalling that $\hg=\epsilon^{-2}Y^*g=Y^*g_\epsilon$ and $\hf(y,z)=f(Y(y),z)$, we get that 
\[
\Delta_{\hg} \hu_{\epsilon,p}+\hf(\cdot,\hu_{\epsilon,p})=\Delta_{Y^*g_\epsilon}(u_{\epsilon,p}\circ Y)+f(Y(\cdot),u_{\epsilon,p}\circ Y)=\Delta_{g_\epsilon} u_{\epsilon,p}+f(\cdot,u_{\epsilon,p}),
\]
and
\[
\vol_g(B^g_{\epsilon(1+v_{\epsilon,p})}(p))=\epsilon^n\vol_{\bar{g}}(B_{1+v_{\epsilon,p}})=\epsilon^n\vol_{\hg}(B_1)=\epsilon^n|B_1|=|B_\epsilon|.
\]
Hence, we can reformulate what we have obtained as follows. For each $\epsilon\in(0,\epsilon_0)$ and $p\in M$, we have a collection of $\C^{2,\alpha}$-domains $\{\Omega_{\epsilon,p}\}_{p\in M}$ of $M$, where $\Omega_{\epsilon,p}=B^{g_\epsilon}_{1+v_{\epsilon,p}}(p)$ is centered at $p$ and $\vol_{g}(\Omega_{\epsilon,p})=|B_\epsilon|$, as well as a collection of solutions $\{u_{\epsilon,p}\}_{p\in M}$ to the problem
\begin{equation}\label{eq:onp_affine}
\left\{
\begin{array}{rcll}
\Delta_{g_\epsilon} u_{\epsilon,p}+f(\cdot,u_{\epsilon,p})&=&0 & \text{in } \Omega_{\epsilon,p}
\\
u_{\epsilon,p}&>&0 & \text{in } \Omega_{\epsilon,p}
\\
u_{\epsilon,p}&=&0 & \text{on } \partial\Omega_{\epsilon,p}
\\
g_{\epsilon}(\nabla_{g_\epsilon} u_{\epsilon,p},\nu_{g_\epsilon}) &=& \text{const.}-\langle a_{\epsilon,p},Y^{-1}(\cdot)\rangle& \text{on } \partial\Omega_{\epsilon,p},
\end{array}
\right.
\end{equation}
where $\nu_{g_{\epsilon}}=Y_*\nu_{\hg}$ is the outward $g_\epsilon$-unit normal vector field to $\partial\Omega_{\epsilon,p}$. 

Now we are interested in finding some point $p\in M$ such that the
vector $a_{\epsilon,p}\in T_pM$, provided by
Proposition~\ref{prop:affine} and appearing in the Neumann condition
in~\eqref{eq:onp_affine}, is zero. Note that, as $\Delta_{g_{\epsilon}}=\epsilon^2\Delta_g$, this would be equivalent
to saying that $u_{\ep,p}$ is a solution to the overdetermined
problem~\eqref{eq:onp} in the domain $\Omega_{\epsilon,p}$ with $\la:=\epsilon^{-2}$. 

To show the existence of zeros of the vector field $a_\ep$ we will use a
variational technique. Given a function $u\in H^1_0(\Om)$, we define its energy as
\[
J(u):=\int_{\Omega}\left(\frac{1}{2}\left| \nabla_g u\right|^2_g-F(\cdot,u)\right)dV_g,
\]
where 
$$
F(p,z):=\int_0^z f(p,\zeta)\, d\zeta.
$$

Let $\Omega_0$ be a smooth bounded domain of $M$. We call $\{\Omega_t\}_{t\in(0,t_0)}$ a deformation of $\Omega_0$ if there exists a smooth vector field $\Xi$ such that $\Omega_t=\xi(t,\Omega_0)$, where $\xi(t,\cdot)$ is the flow of $\Xi$.

\begin{proposition}\label{prop:variational}
	Assume that $\{\Omega_t:=\xi(t,\Om_0)\}_{t\in(-t_0,t_0)}$ is a
        deformation of a smooth bounded domain $\Omega_0$, and $u_t\in
        C^2(\Omega_t)$ is a solution to the Dirichlet problem
        $\Delta_g  u_t+f(\cdot, u_t)=0$ in $\Omega_t$, $u_t=0$ on
        $\partial \Omega_t$, depending smoothly on $t$. Then the
        derivative of the energy functional $J(t):=J(u_t)$ at $t=0$
        reads as
	\[
	J'(0)=-\frac{1}{2}\int_{\partial \Omega_0} g(\nabla_g u_0,\nu_0)^2 g(\Xi,\nu_0)dV_g,
	\]
	where $\nu_0$ is the outward unit normal vector field on $\partial \Omega_0$.
\end{proposition}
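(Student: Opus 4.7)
My plan is to use the Hadamard-type formula for differentiating an integral over a moving domain, together with the fact that $u_t$ solves an Euler--Lagrange-type equation, so that only the boundary contributions survive.

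First, I would write $J(t) = \int_{\Omega_t} \mathcal{L}(u_t)\, dV_g$ with Lagrangian $\mathcal{L}(u) := \tfrac{1}{2}|\nabla_g u|_g^2 - F(\cdot,u)$ and apply the Reynolds transport formula for moving regions, which splits the derivative into an interior variation and a flux term on the boundary:
\[
J'(0) = \int_{\Omega_0}\!\!\bigl(g(\nabla_g u_0,\nabla_g \dot u) - f(\cdot,u_0)\,\dot u\bigr)\,dV_g \;+\; \int_{\partial\Omega_0}\!\! \mathcal{L}(u_0)\, g(\Xi,\nu_0)\,d\sigma_g,
\]
where $\dot u := \partial_t u_t\rvert_{t=0}$. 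Since $u_0 = 0$ on $\partial\Omega_0$, the tangential components of $\nabla_g u_0$ vanish on $\partial\Omega_0$, so that $|\nabla_g u_0|_g^2 = g(\nabla_g u_0,\nu_0)^2$ there, while $F(\cdot,u_0) = F(\cdot,0) = 0$ on $\partial\Omega_0$. Hence the flux term reduces to $\tfrac12\int_{\partial\Omega_0} g(\nabla_g u_0,\nu_0)^2\, g(\Xi,\nu_0)\,d\sigma_g$.

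Next, I would process the interior term. Integration by parts together with the PDE $\Delta_g u_0 = -f(\cdot,u_0)$ gives
\[
\int_{\Omega_0}\!\!\bigl(g(\nabla_g u_0,\nabla_g \dot u) - f(\cdot,u_0)\,\dot u\bigr)\,dV_g = \int_{\partial\Omega_0} g(\nabla_g u_0,\nu_0)\,\dot u\,d\sigma_g.
\]
The key ingredient is now to identify $\dot u$ on $\partial\Omega_0$. Differentiating the identity $u_t(\xi(t,x)) = 0$ in $t$ at $t=0$ for $x\in\partial\Omega_0$ yields $\dot u(x) = -g(\nabla_g u_0,\Xi)(x)$, and since $\nabla_g u_0$ is normal to $\partial\Omega_0$ we get $\dot u = -g(\nabla_g u_0,\nu_0)\,g(\Xi,\nu_0)$ on $\partial\Omega_0$.

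Plugging this into the boundary integral gives an interior contribution equal to $-\int_{\partial\Omega_0} g(\nabla_g u_0,\nu_0)^2\, g(\Xi,\nu_0)\,d\sigma_g$. Adding the flux term computed above produces exactly the claimed formula $J'(0) = -\tfrac12 \int_{\partial\Omega_0} g(\nabla_g u_0,\nu_0)^2\, g(\Xi,\nu_0)\,d\sigma_g$. The main technical point is the smoothness in $t$ of $u_t$ in a fixed function space (which is granted by hypothesis) so that the transport formula and the differentiation of the boundary condition are rigorously justified; the rest is a clean application of integration by parts together with the vanishing Dirichlet condition.
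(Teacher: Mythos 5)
Your proof is correct and follows essentially the same route as the paper's: the transport formula for moving regions, integration by parts combined with the equation $\Delta_g u_0=-f(\cdot,u_0)$, differentiation of the Dirichlet condition along the flow to get $\dot u=-g(\nabla_g u_0,\nu_0)\,g(\Xi,\nu_0)$ on $\partial\Omega_0$, and the observations that $\nabla_g u_0$ is normal to the boundary and $F(\cdot,0)=0$. No gaps to report.
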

\begin{proof}
	Using the differentiation formula for moving regions (see \cite[Appendix~C.4]{Evans}), we get
	\begin{align*}
		J'(0)={}&\int_{\Omega_0}g(\nabla_g \partial_t u_0,\nabla_g u_0)dV_g-\int_{\Omega_0}f(\cdot,u_0)\partial_t u_0dV_g
		\\
		&+\int_{\partial \Omega_0}\left(\frac{1}{2}g(\nabla_g u_0,\nu_0)^2-F(\cdot, u_0)\right)g(\Xi,\nu_0)d\sigma_g,
	\end{align*}
	where $\partial_t u_0$ stands for the derivative of $u_t$ with
        respect to $t$ at $t=0$, $\Xi$ is the vector field defining
        the deformation, and $d\sigma_g$ is the induced Riemannian
        area measure on $\partial \Omega_0$. Integrating by parts in the first integral and using the relation $\Delta_g u_0+f(\cdot, u_0)=0$, we have
	\begin{align*}
		\int_{\Omega_0}g(\nabla_g \partial_t u_0,\nabla_g u_0)dV_g={}&-\int_{\Omega_0}\partial_t u_0 \, \Delta_g u_0dV_g +\int_{\partial\Omega_0}\partial_t u_0 g(\nabla_g u_0,\nu_0)d\sigma_g
		\\
		={}&\int_{\Omega_0}f(\cdot, u_0)\partial_t u_0dV_g +\int_{\partial\Omega_0}\partial_t u_0 g(\nabla_g u_0,\nu_0)d\sigma_g.
	\end{align*}
	Denoting by $\xi$ the flow generated by $\Xi$, by definition we have that $u_t(\xi(t,p))=0$ for all $p\in\partial \Omega_t$. Differentiating this identity with respect to $t$ at $t=0$ we obtain $\partial_t u_0=-g(\nabla_g u_0,\Xi)$ on $\partial \Omega_0$. But, since $u_0$ is constant on $\partial \Omega_0$, we can write $\partial_t u_0=-g(\nabla_g u_0,\nu_0)g(\Xi,\nu_0)$ on $\partial \Omega_0$. Moreover, $F(\cdot, u_0)=F(\cdot,0)=0$ on $\partial \Omega_0$. Altogether, we obtain the formula in the statement.
\end{proof}

Now, using the solutions $u_{\ep,p}$ to the Dirichlet problem on the domain $\Omega_{\epsilon,p}$ to~\eqref{eq:onp_affine}, for each $\epsilon>0$ small enough we can define a smooth function $\cal{J}_\epsilon\colon M\to\R$ as
\[
\cal{J}_\epsilon(p):=J(u_{\epsilon,p})=\int_{\Omega_{\epsilon,p}}\left(\frac{1}{2}\left| \nabla_{g_\epsilon} u_{\epsilon,p}\right|^2_{g_\epsilon}-F(\cdot,u_{\epsilon,p})\right)dV_{g_\epsilon}, \quad p\in M.
\]
The following proposition provides a key result to derive the main theorems of this paper:

\begin{proposition}\label{prop:critical_point}
For each small enough $\epsilon>0$, $u_{\ep,p}$ is a solution to the
overdetermined problem~\eqref{eq:onp} on the domain $\Omega_{\epsilon,p}$ with $\la:=\ep^{-2}$ if and only if $p$ is a critical point of $\cal{J}_\epsilon$.
\end{proposition}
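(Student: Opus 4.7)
The plan is to apply Proposition~\ref{prop:variational} to a deformation of $\Omega_{\epsilon,p}$ obtained by moving the basepoint along a tangent vector $X\in T_pM$, and then exploit the very specific form of the Neumann data provided by \eqref{eq:onp_affine}. Concretely, given $X\in T_pM$, pick a smooth curve $p(t)\subset M$ with $p(0)=p$ and $p'(0)=X$, set $\Om_t:=\Omega_{\epsilon,p(t)}$, $u_t:=u_{\epsilon,p(t)}$, and let $\Xi$ be the associated deformation vector field (constructed, for instance, from $Y_{\epsilon,p(t)}\circ Y_{\epsilon,p}^{-1}$). Since $u_t$ solves the Dirichlet problem $\De_{g_\epsilon}u_t+f(\cdot,u_t)=0$ on $\Om_t$ (with $f$ replaced by the rescaled nonlinearity, and the metric $g_\epsilon=\epsilon^{-2}g$), Proposition~\ref{prop:variational} yields
\[
d\cal{J}_\epsilon(p)[X]=-\frac12\int_{\pd\Omega_{\epsilon,p}} g_\epsilon(\nabla_{g_\epsilon}u_{\epsilon,p},\nu_{g_\epsilon})^2\, g_\epsilon(\Xi,\nu_{g_\epsilon})\,d\sigma_{g_\epsilon}.
\]

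Next I would plug in the Neumann data from \eqref{eq:onp_affine}, namely $g_\epsilon(\nabla_{g_\epsilon}u_{\epsilon,p},\nu_{g_\epsilon})=c_{\epsilon,p}-\langle a_{\epsilon,p},Y^{-1}(\cdot)\rangle$ with $c_{\epsilon,p}$ constant, and expand the square. The constant piece $c_{\epsilon,p}^2$ integrates against $g_\epsilon(\Xi,\nu_{g_\epsilon})$, and the volume-preservation constraint $\vol_g(\Om_{\epsilon,p(t)})=|B_\epsilon|$ (equivalent to $\vol_{g_\epsilon}(\Om_{\epsilon,p(t)})=|B_1|$) forces $\int_{\pd\Omega_{\epsilon,p}}g_\epsilon(\Xi,\nu_{g_\epsilon})\,d\sigma_{g_\epsilon}=0$. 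Hence
\[
d\cal{J}_\epsilon(p)[X]=c_{\epsilon,p}\!\int_{\pd\Omega_{\epsilon,p}}\!\!\langle a_{\epsilon,p},Y^{-1}\rangle\,g_\epsilon(\Xi,\nu_{g_\epsilon})\,d\sigma_{g_\epsilon}-\frac12\!\int_{\pd\Omega_{\epsilon,p}}\!\!\langle a_{\epsilon,p},Y^{-1}\rangle^2 g_\epsilon(\Xi,\nu_{g_\epsilon})\,d\sigma_{g_\epsilon}.
\]
One direction of the proposition is now immediate: if $a_{\epsilon,p}=0$, both terms vanish and $p$ is critical.

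For the converse I would pull everything back to $B_1$ via $Y$: $Y^*g_\epsilon=\hg$, $Y^{-1}|_{\pd B_1}=z\in S^{n-1}$, and let $\hat\Xi:=Y^{-1}_*\Xi$. Since $\hg\to g_0$, $\nu_{\hg}\to z$, $d\sigma_{\hg}\to d\sigma$, $c_{\epsilon,p}\to c_1=\pd_r\phi_p|_{r=1}<0$, and (for the natural realization of the deformation) $\hat\Xi|_{\pd B_1}\to X$ as $\epsilon\to 0$, the leading-order term is
\[
c_1\int_{S^{n-1}}\langle a_{\epsilon,p},z\rangle\langle X,z\rangle\,d\sigma(z)=\frac{c_1\omega_n}{n}\,\langle a_{\epsilon,p},X\rangle,
\]
using $\int_{S^{n-1}}z_iz_j\,d\sigma=\frac{\omega_n}{n}\delta_{ij}$. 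Thus there is a bounded linear map $L_\epsilon\colon T_pM\to T_p^*M$ with $L_\epsilon\to \frac{c_1\omega_n}{n}\mathrm{Id}$ as $\epsilon\to 0$, and
\[
d\cal{J}_\epsilon(p)[X]=\bigl\langle L_\epsilon a_{\epsilon,p},X\bigr\rangle+R_\epsilon(a_{\epsilon,p},X),
\]
with $|R_\epsilon(a,X)|\leq C|a|^2\,|X|$. Since $c_1\neq 0$, $L_\epsilon$ is invertible for small $\epsilon$, and since $a_{\epsilon,p}\to 0$ as $\epsilon\to 0$, the quadratic remainder is absorbed. It follows that $d\cal{J}_\epsilon(p)=0$ if and only if $a_{\epsilon,p}=0$, which by \eqref{eq:onp_affine} is exactly the condition for $u_{\epsilon,p}$ to solve the overdetermined problem on $\Omega_{\epsilon,p}$ with $\lambda=\epsilon^{-2}$.

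The main technical obstacle I anticipate is verifying that the deformation field $\Xi$ can be chosen so that $\hat\Xi|_{\pd B_1}\to X$ uniformly as $\epsilon\to 0$, while remaining compatible with the center-of-mass normalization imposed in Proposition~\ref{prop:affine}. This boils down to differentiating the map $(p,t)\mapsto Y_{\epsilon,p(t)}$ smoothly in $t$, which requires controlling the $p$-dependence of $\bar v_{\epsilon,p}$ and $v_0(p,\epsilon,\bar v_{\epsilon,p})$ uniformly; both were seen to depend smoothly on $p$ in Propositions~\ref{prop:implicit} and~\ref{prop:affine}, so this is a bookkeeping rather than a conceptual difficulty.
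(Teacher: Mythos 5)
Your proposal is correct and follows essentially the same route as the paper: compute $d\cal{J}_\ep(p)[X]$ via Proposition~\ref{prop:variational}, pull back to $B_1$, substitute the Neumann data from~\eqref{eq:onp_affine}, kill the constant piece by volume preservation, and then exploit that $a_{\ep,p}\to0$ while the Neumann constant stays bounded away from zero. The only (cosmetic) difference is in how the converse direction is organized: you package the derivative as a linear map $L_\ep a_{\ep,p}$ plus a quadratic remainder and invoke invertibility of $L_\ep$ for small $\ep$, whereas the paper avoids introducing $L_\ep$ explicitly by testing against the clever choice $w:=b_{\ep,p}a_{\ep,p}$ and deriving a scalar inequality $|b_{\ep,p}|\leq C(\ep|a_{\ep,p}|+|a_{\ep,p}|+\ep)$ that forces $a_{\ep,p}=0$; both compress the same linearization estimate $|\hg(\hat\Xi,\nu_{\hg})-\langle w,\cdot\rangle|\leq c\ep|w|$ that you identify as the key technical point.
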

\begin{proof}
    In view of~\eqref{eq:onp_affine}, $u_{\ep,p}$ is a solution to the
overdetermined problem~\eqref{eq:onp} on the domain $\Omega_{\epsilon,p}$ with $\la:=\ep^{-2}$ if and only if $a_{\epsilon,p}=0$.
	We start by calculating the differential of
        $\cal{J}_\epsilon$. Let $w \in T_p M$ and define
        $q_t=\exp_p^{g_\epsilon}(tw)$. If $t$ is small enough,
        $\partial \Omega_{\epsilon,q_t}$ can be expressed as a graph
        of a function $h_t$ over $\partial \Omega_{\epsilon,p}$. As
        the domains have fixed volume, this yields a volume-preserving vector field $\Xi$ on $\partial \Omega_{\epsilon, p}$ given by 
	\[
	\Xi=\left.\frac{\partial h_t}{\partial t}\right|_{t=0}\nu_{g_\epsilon}.
	\]
	Then, by Proposition~\ref{prop:variational} and \eqref{eq:onp_affineB1} we have
	\begin{align}\nonumber
	D_p\cal{J}_\epsilon(w)&=\left.\frac{d}{dt}\right|_{t=0}\cal{J}_\epsilon(q_t)=-\frac{1}{2}\int_{\partial \Omega_{\epsilon,p}}g_\epsilon(\nabla_{g_\epsilon} u_{\epsilon,p},\nu_{g_\epsilon})^2 g_\epsilon(\Xi,\nu_{g_\epsilon}) d\si_{g_\epsilon}
	\\ 	\label{eq:diff_J}
	&= -\frac{1}{2}\int_{\partial B_1}\left(b_{\epsilon,p}-\langle a_{\epsilon,p},\cdot\rangle\right)^2\hg(\widehat{\Xi},\nu_{\hg})  d\si_{\hg}
	\end{align}
	where $b_{\epsilon,p}$ is a constant, $\widehat{\Xi}= Y^*\Xi$ and $\nu_{\hg}= Y^*\nu_{g_\epsilon}$. 
	Observe that if $a_{\epsilon,p}=0$, then $p$ is a critical point of~$\cal{J}_\epsilon$, since $\widehat{\Xi}$ is volume-preserving.
	
	In order to prove the converse, assume that $D_p\cal{J}_\epsilon=0$. Then \eqref{eq:diff_J} implies
	\begin{equation}\label{eq:diff_J_0}
	2b_{\epsilon,p} \int_{\partial B_1}\langle a_{\epsilon,p},\cdot\rangle\hg(\widehat{\Xi},\nu_{\hg})  d\si_{\hg}=\int_{\partial B_1}\langle a_{\epsilon,p},\cdot\rangle^2\hg(\widehat{\Xi},\nu_{\hg})  d\si_{\hg},
	\end{equation}
	for all $w\in T_p M$. By the Taylor expansion of the metric,
        for all $\epsilon$ small enough there exists a constant $c$,
        independent of~$\ep$, such that
	\[
	\left|\hg(\widehat{\Xi},\nu_{\hg})-\langle w,\cdot\rangle\right|\leq c\epsilon |w|.
	\]
	If we choose $w:=b_{\epsilon,p} a_{\epsilon,p}$ we have that 
$$
\hg(\widehat{\Xi},\nu_{\hg})=b_{\epsilon,p}\langle
a_{\epsilon,p},\cdot\rangle +\epsilon k\,,
$$
where 
$$
|k|\leq c|b_{\epsilon,p}||a_{\epsilon,p}|\,.
$$
Using this in \eqref{eq:diff_J_0} we deduce that there exists a constant $C>0$ (independent of~$\epsilon$) such that for all $\epsilon$ small enough the following inequality holds:
	\[
	2b_{\epsilon,p}^2 \int_{\partial B_1}\langle a_{\epsilon,p},\cdot\rangle^2 dV_{\hg} \leq C |b_{\epsilon,p}|\left(\epsilon|a_{\epsilon,p}|^3+|a_{\epsilon,p}|^3+\epsilon|a_{\epsilon,p}|^2\right).
	\]
	Since for $\epsilon$ small enough the left hand side is bounded from below by a positive constant times $b_{\epsilon,p}^2|a_{\epsilon,p}|^2$, we obtain
	\begin{equation}\label{eq:inequal_a}
	b^2_{\epsilon,p}|a_{\epsilon,p}|^2\leq C|b_{\epsilon,p}|\left(\epsilon|a_{\epsilon,p}|+|a_{\epsilon,p}|+\epsilon\right)|a_{\epsilon,p}|^2,
	\end{equation}
	for some other constant $C>0$. Now, as $b_{\epsilon,p}\neq 0$ when
        $\epsilon=0$ (because, by Assumption~A(i), $\partial_\nu\phi_p\rvert_{\partial B_1}<0$), we have that $|b_{\epsilon,p}|>C$ for some uniform constant $C>0$. Since $\lim_{\epsilon\to 0}a_{\epsilon,p}=0$, the inequality \eqref{eq:inequal_a} implies that, for $\epsilon$ small enough, $a_{\epsilon,p}=0$. Finally observe that the existence of an $\epsilon_0>0$ independent of $p\in M$ is guaranteed by the compactness of $M$ and the fact that the constants in the arguments above are bounds on quantities that only depend on the norms of the geometric objects involved.	
\end{proof}

This completes the proof of Theorem~\ref{th:main2}.

%Observe that Theorem~\ref{th:main} follows directly from
%Proposition~\ref{prop:critical_point}. 

\section{Variations and applications of the proof of Theorem~\ref{th:main}}
\label{S.variations}

In this section we derive some corollaries of the proof of Theorem~\ref{th:main2}. In particular, we prove Theorem~\ref{th:Eulerchar} and a uniqueness result that will be important in the following sections.

\begin{proof}[Proof of Theorem~\ref{th:Eulerchar}]
Let us now replace the equation $\Delta_g u+\lambda f(\cdot, u)=0$ by
the more general equation
\begin{equation}\label{eqX}
\Delta_g u+\langle \nabla_g u, X\rangle+\lambda f(\cdot, u)=0\,,
\end{equation}
where $X$ is a $C^{\al}$-vector field on the manifold. Our goal now is
to show that if Assumption~A holds (notice that this assumption
does not involve the vector field~$X$) and the Euler characteristic
of~$M$ is nonzero, then the overdetermined boundary problem for
Equation~\eqref{eqX} admits a nontrivial solution on domains that are small perturbations of small
geodesic balls.

Going through the proof of Theorem~\ref{th:main2} one easily checks that
the analysis of the rescaled equation developed in Step~1 carries over to the more general
equation~\eqref{eqX}, as does the discussion of the behavior of the
normal derivative in Step~2. However, the proof of the fact that the vector
field~$a_\ep$ has zeros, presented in Step~3, does not apply to this
more general equation, as the demonstration relies on the variational
formulation of the equation $\Delta_g u+\lambda f(\cdot,
u)=0$. However, since we have already shown that $a_\ep$ is a
$C^{1,\al}$-vector field on~$M$ (Proposition~\ref{prop:affine}),
$a_\ep$ is guaranteed to have zeros if $M$ is a compact manifold of
nonzero Euler characteristic. Theorem~\ref{th:Eulerchar} then follows.
\end{proof}

The construction method we have developed implies the following
uniqueness result:

\begin{proposition}\label{cor:unique}
	Suppose that the nonlinearity $f$ satisfies
        Assumption~A. Suppose that we have two solutions to the
        overdetermined problem~\eqref{eq:onp}, given by $(u,
        \Om:=B^g_{\epsilon(1+v)}(p), \la:=\ep^{-2})$ and $(u',
        \Om':=B^g_{\epsilon(1+v')}(p), \la:=\ep^{-2})$,
        respectively. Then, if
	\begin{enumerate}
		\item  $\vol_g \bigl(\Omega\bigr)=\vol_g \bigl(\Omega'\bigr)$,
		\item $\Omega$ and $\Omega'$ are centered at $p$,
		\item $\epsilon$ and the $\C^{2,\alpha}$-norms of $v$,
                  $v'$, $\hu-\phi_p$ and $\hu'-\phi_p$ are small
                  enough (i.e., smaller than some quantity~$\de$
                  depending on~$p$),
	\end{enumerate}
	then $u=u'$ and $\Om=\Om'$. Moreover, if $M$ is compact, then
        $\delta>0$ can be taken to be independent of the point $p\in M$.
\end{proposition}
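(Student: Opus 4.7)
The plan is to show that both given solutions must coincide with the unique pair produced by the implicit function theorem constructions of Propositions~\ref{prop:implicit} and~\ref{prop:affine}. The entire argument amounts to checking that the hypotheses of the uniqueness clauses in those two results are satisfied.

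First I would decompose $v=v_0+\bar v$ and $v'=v_0'+\bar v'$ with $\bar v,\bar v'\in C^{2,\alpha}\avg(T_p^1M)$; the smallness hypothesis on $v$ and $v'$ guarantees that all four pieces have small norm. Transferring each solution to the unit ball via the parametrization $Y$ of~\eqref{eq:Y} produces functions $\hu=u\circ Y$ and $\hu'=u'\circ Y$ solving the rescaled Dirichlet problem~\eqref{eq:hat_dirichlet} with the volume normalization~\eqref{eq:hat_vol}. Because $\|\hu-\phi_p\|_{C^{2,\alpha}}$ and $\|\hu'-\phi_p\|_{C^{2,\alpha}}$ are small by assumption, the uniqueness part of Proposition~\ref{prop:implicit} shows that $(v_0,\hu)$ is determined uniquely by $(p,\epsilon,\bar v)$, and the analogous statement holds for $(v_0',\hu')$. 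So to prove the proposition it suffices to establish $\bar v=\bar v'$.

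Next I would use the overdetermined boundary condition and the centering hypothesis. The constancy of $g(\nabla_g u,\nu_g)$ on $\partial\Omega$ is exactly the assertion $\cal F(p,\epsilon,\bar v)=0$, while the centering of $\Omega$ at $p$ reads as $A(p,\epsilon,\bar v+v_0(p,\epsilon,\bar v))=0$ in the notation of Proposition~\ref{prop:affine}. Hence $\bar{\cal F}(p,\epsilon,\bar v)=(0,0)$, and the analogous identities hold for $\bar v'$. Equivalently, $(\bar v,0)$ and $(\bar v',0)$ are both small solutions of the system $\cal F(p,\epsilon,\cdot)+\langle a,\cdot\rangle=0$, $A(p,\epsilon,\cdot+v_0)=0$ whose unique small solution, by Proposition~\ref{prop:affine}, is the pair $(\bar v_{\epsilon,p},a_{\epsilon,p})$. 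Provided $\delta$ is chosen small enough that both $\bar v$ and $\bar v'$ sit inside the relevant IFT neighborhood, the uniqueness there forces $\bar v=\bar v'=\bar v_{\epsilon,p}$, and combined with the first step this yields $v_0=v_0'$, $\hu=\hu'$, hence $\Omega=\Omega'$ and $u=u'$.

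I do not foresee a genuine obstacle; the only care point is ensuring that the smallness bounds on $v,v',\hu-\phi_p,\hu'-\phi_p$ in the hypotheses combine to place the data simultaneously inside both IFT neighborhoods. This is routine because those neighborhoods are built from the quantitative bounds already established in Propositions~\ref{prop:implicit} and~\ref{prop:affine}. Finally, the uniform choice of $\delta$ when $M$ is compact follows from the fact that both implicit function theorem constructions depend continuously on the basepoint $p$, so the sizes of their neighborhoods are bounded below uniformly in $p\in M$.
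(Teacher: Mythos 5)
Your overall strategy (reduce everything to the uniqueness clauses of the implicit function theorem constructions) is the same as the paper's, but as written there is a genuine gap at the very first step: you assert that the transferred functions $\hu=u\circ Y$ and $\hu'=u'\circ Y$ satisfy the Dirichlet problem~\eqref{eq:hat_dirichlet} \emph{together with} the volume normalization~\eqref{eq:hat_vol}. The normalization $\vol_{\hg}(B_1)=|B_1|$ is equivalent to $\vol_g(\Omega)=|B_\epsilon|$, and this is \emph{not} among the hypotheses of the proposition: hypothesis (i) only says $\vol_g(\Omega)=\vol_g(\Omega')$, with no condition tying the common value to~$\epsilon$. Consequently the pairs $(v_0,\hu)$ and $(v_0',\hu')$ coming from the given solutions need not lie on the solution branch of Proposition~\ref{prop:implicit}, so its uniqueness clause cannot be invoked to identify them with the canonical pair. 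The same defect propagates to your second step: the operator $\cal F$ in~\eqref{eq:F} is defined through the volume-normalized pair supplied by Proposition~\ref{prop:implicit}, so without the normalization the constancy of $g(\nabla_g u,\nu_g)$ on $\partial\Omega$ does not translate into $\cal F(p,\epsilon,\bar v)=0$, and Proposition~\ref{prop:affine} cannot be applied as stated. Note also that one cannot simply rescale $\epsilon$ to an $\epsilon'$ matching the volume, because the hypothesis fixes $\lambda=\epsilon^{-2}$, and rescaling by $\epsilon'$ would change the constant in front of $\hf$ away from the normalized value $\bar\lambda=1$.

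The paper closes exactly this gap by re-running both constructions with an extra parameter: one replaces the second component of $\cal N$ in Proposition~\ref{prop:implicit} by $\vol_{\hg}(B_1)-|B_{1+\eta}|$ with a new variable $\eta\in(-1,1)$, applies the implicit function theorem at $(p,0,0,0,0,\phi_p)$ to get unique $v_0(p,\epsilon,\eta,\bar v)$ and $\hu(p,\epsilon,\eta,\bar v)$, and carries the parameter $\eta$ through the definition of $\cal F$ and through Proposition~\ref{prop:affine} (obtaining $\bar v_{\epsilon,\eta,p}$ and $a_{\epsilon,\eta,p}$). Uniqueness then applies to your two solutions after choosing $\eta\in(-\delta,\delta)$ so that the common volume equals $|B_{\epsilon(1+\eta)}|$, which is possible precisely because $\epsilon$, $\|\bar v\|_{\C^{2,\alpha}}$ and $\|\bar v'\|_{\C^{2,\alpha}}$ are small. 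If you add this modification, the rest of your argument (including the uniformity of $\delta$ in $p$ by compactness) goes through as in the paper; without it, your proof only establishes uniqueness among solutions whose volume is exactly $|B_\epsilon|$, which is strictly weaker than the statement and insufficient for the symmetry applications in Section~\ref{sec:symmetry}, where the flexibility in $\eta$ is used explicitly.
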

\begin{proof}
	In the proof of Proposition~\ref{prop:implicit} we could have introduced a new variable $\eta\in(-1,1)$ in the definition of the map $\cal{N}$, and substituted the second component of $\cal{N}$ by $\vol_{\hg}(B_1)-|B_{1+\eta}|$. Then, the implicit function theorem would apply in the same way at the point $(p,0,0,0,0,\phi_p)$ to derive the existence of unique $v_0=v_0(p,\epsilon,\eta,\bar{v})\in \R$ and $\hu=\hu(p,\epsilon,\eta,\bar{v})\in\C^{2,\alpha}\Dir(B_1)$ in small enough neighborhoods of $0\in\R$ and of $\phi_p\in\C^{2,\alpha}\Dir(B_1)$, respectively, satisfying $\cal{N}(p,\epsilon,\eta,\bar{v},v_0,\hu)=(0,0)$, whenever $\epsilon$, $\eta$ and $\bar{v}$ have small enough norm. Then, the remaining arguments in Section~\ref{sec:setup} are valid only with some formal modifications (e.g.\ the operator $\cal{F}$ should now be defined as a function of $(p,\epsilon,\eta,\bar{v})$, and Proposition~\ref{prop:affine} would yield $a_{\epsilon,\eta,p}$ and $\bar{v}_{\epsilon,\eta,p}$). In particular, the arguments in Proposition~\ref{prop:affine} imply the uniqueness of $a_{\epsilon,\eta,p}$ and $\bar{v}_{\epsilon,\eta,p}$ in small enough neighborhoods of $0$ in $T_pM$ and $\C^{2,\alpha}\avg(S^{n-1})$, respectively, whenever $\epsilon$ and $\eta$ are small enough, say less than~$\delta$. Hence, the statement of the corollary holds if, in addition to the hypotheses, we have $\vol_g \bigl(B^g_{\epsilon(1+v)}(p)\bigr)=\vol_g \bigl(B^g_{\epsilon(1+v')}(p)\bigr)=|B_{\epsilon(1+\eta)}|$, for some $\eta\in(-\delta,\delta)$. But if $\epsilon$, $\|\bar{v}\|_{\C^{2,\alpha}(S^{n-1})}$ and $\|\bar{v}'\|_{\C^{2,\alpha}(S^{n-1})}$ are small enough, there exists $\eta\in(-\delta,\delta)$ satisfying the additional condition on the volumes.
\end{proof}

\section{Overdetermined problems on noncompact manifolds}\label{sec:homogeneity}

The purpose of this section is to discuss the possibility of relaxing
the compactness assumption on $M$ in the results above. Thus, in the
rest of this section, $M$ is not assumed to be compact.

A first result is that Theorem~\ref{th:main} holds also if the compactness assumption on $M$ is
substituted by a suitable homogeneity assumption. To present a
rigorous formulation of this fact, consider a group $\G$ of
isometries of $(M,g)$ that leave $f$ invariant, namely
\[
\G\subseteq\{\varphi\in\mathrm{Isom}(M): f(\varphi(p),z)=f(p,z),\text{ for all } p\in M, z\in\R\}.
\]
Consider the orbit space $M/\G$, that is, the set of all the
orbits
\[
\G\cdot p=\{\varphi(p):\varphi\in \G\}\,,
\]
with $p\in M$, of the
isometric $\G$-action on $M$, endowed with the quotient topology
induced by the canonical projection map $M\to M/\G$, $p\mapsto \G\cdot
p$ (see~\cite[\S2.1]{BCO2} for more information on isometric
actions). Notice that $M/\G$ does not have to be a differentiable manifold: it can be a
manifold with corners, or even an orbifold. 

The fact that Theorem~\ref{th:main} remains valid if the orbit space $M/\G$ is compact
(but not necessarily $M$) follows from the following proposition. Of
course, particularly simple yet important
examples of problems to which this result applies are noncompact
homogeneous spaces (or, more generally, spaces with a co-compact
isometry group) with a position-independent nonlinearity $G(z)$. Recall that a Riemannian manifold $M$ is called homogeneous if its isometry group acts transitively on $M$ or, equivalently, if given any two points $p$, $q\in M$ there exists an isometry of $M$ mapping $p$ to $q$.

\begin{proposition}\label{th:quotient}
		 Theorem~\ref{th:main2} and Proposition~\ref{cor:unique} remain true if the compactness
		 assumption on $M$ is substituted by the following two hypotheses:
		 \begin{enumerate}[{\rm (a)}]
		 	\item the orbit space $M/\G$ is compact,
		 	\item the map $\phi_p$ in	Assumption~A is $\G$-invariant (i.e., $\phi_p=\phi_{\vp(p)}$ for all $\vp\in \G$). 
		 \end{enumerate}
		 Moreover, Theorems~\ref{T.ex1} and~\ref{T.ex2}, and hence Theorem~\ref{th:main}, remain true if the compactness assumption on $M$ is substituted by condition \rm{(a)}.
\end{proposition}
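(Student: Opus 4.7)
The plan is to exploit the $\G$-equivariance of the entire construction carried out in Section~\ref{sec:setup} to reduce all the compactness arguments used in the proof of Theorem~\ref{th:main2} and Proposition~\ref{cor:unique} to corresponding arguments on the compact orbit space~$M/\G$. The first task is to verify that, when $\vp\in\G$ and we use the canonical isometric identification $d\vp_p\colon T_pM\to T_{\vp(p)}M$, every object produced by the implicit function theorem transforms covariantly. Specifically, because $\exp^g_{\vp(p)}\circ\, d\vp_p=\vp\circ\exp^g_p$ and $f(\vp(p),\cdot)=f(p,\cdot)$, the metric $\hg$ on $B_1$ and the local nonlinearity $\hf$ coincide at $p$ and at $\vp(p)$; hence $\hu_{\ep,\vp(p)}=\hu_{\ep,p}$, $v_{0,\vp(p),\ep,\bar v}=v_{0,p,\ep,d\vp_p^{-1}\bar v}$, the operator $\cal F$ is equivariant, the vector field $a_\ep$ satisfies $a_{\ep,\vp(p)}=d\vp_p(a_{\ep,p})$, and the energy $\cal J_\ep$ is $\G$-invariant. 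Under hypothesis~(b) the seed function $\phi_p$ of Assumption~A is already $\G$-invariant, so everything proceeds consistently.

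The second step is to obtain uniform constants. The neighborhoods $\de_0$ in Proposition~\ref{prop:implicit}, $\ep_0$ in Proposition~\ref{prop:affine}, and the lower bounds $|b_{\ep,p}|>C$ used in Proposition~\ref{prop:critical_point} depend only on local norms of geometric objects evaluated at $p$ and on the corresponding solution of the unit-ball Dirichlet problem. Since $M/\G$ is compact, we can select a compact set $K\subset M$ meeting every $\G$-orbit (for instance, a compact subset whose image under the canonical projection is all of $M/\G$, which exists by compactness of the quotient). The usual compactness arguments yield uniform constants on~$K$; by the equivariance established in the previous step, these constants are in fact uniform over all of~$M$. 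This immediately upgrades Propositions~\ref{prop:implicit}--\ref{prop:critical_point}, and hence also the uniqueness statement in Proposition~\ref{cor:unique}, to the present non-compact setting.

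For the existence of a critical point of $\cal J_\ep$ on~$M$, which is the one step that genuinely used compactness of~$M$ (through the Lusternik--Shnirelmann argument or simply the existence of a global extremum), the $\G$-invariance of $\cal J_\ep$ implies that it descends to a continuous function $\bar{\cal J}_\ep\colon M/\G\to\R$. Compactness of the orbit space ensures that $\bar{\cal J}_\ep$ attains a minimum at some orbit $\G\cdot p_\ast$, and then any representative $p_\ast\in M$ is a global minimum of $\cal J_\ep$ on $M$ and in particular a critical point. By Proposition~\ref{prop:critical_point}, this produces the desired solution to the overdetermined problem on the domain $\Om_{\ep,p_\ast}$. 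Note that no smooth structure on $M/\G$ is needed: the orbit space is used only as a compact topological space on which a continuous function attains its infimum.

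Finally, for the second assertion (Theorems~\ref{T.ex1}, \ref{T.ex2} and \ref{th:main} hold under (a) alone), the key observation is that the maps $\phi_p$ constructed in Theorems~\ref{T.ex1} and~\ref{T.ex2} depend on $p$ only through the one-variable function $z\mapsto f(p,z)$; indeed, the ODEs/PDEs satisfied by $\phi_p$ on the fixed Euclidean ball $B_1$ do not see the point $p$ in any other way. If $\vp\in\G$, then $f(\vp(p),\cdot)=f(p,\cdot)$, so the Dirichlet problems defining $\phi_p$ and $\phi_{\vp(p)}$ are identical; uniqueness in the implicit function theorem (resp.\ in the Crandall--Rabinowitz bifurcation argument) then forces $\phi_p=\phi_{\vp(p)}$. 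Therefore hypothesis~(b) is automatic in those settings, and (a) alone suffices. The main obstacle in the whole argument is the verification of the $\G$-equivariance of all the constructions in Section~\ref{sec:setup}, which requires some care because the coordinate charts used there are defined locally at each point $p$ via the exponential map; once this bookkeeping is sorted out by means of the linear identifications $d\vp_p$, the rest reduces to standard compactness arguments on the quotient.
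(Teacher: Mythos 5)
Your proposal is correct and follows essentially the same route as the paper: use the $\G$-invariance of the data (and hence of $\cal N$, $\cal F$, $\hu$, $a_\ep$, $b_{\ep,p}$ and $\cal J_\ep$) to transfer the uniform-constant/compactness arguments of Section~\ref{sec:setup} from a compact set meeting every orbit to all of~$M$, obtain the critical point by passing $\cal J_\ep$ to the compact quotient $M/\G$ and taking an extremum, and note that in Theorems~\ref{T.ex1} and~\ref{T.ex2} the function $\phi_p$ depends on~$p$ only through $f(p,\cdot)$, so its $\G$-invariance (hypothesis~(b)) is automatic under~(a). Your treatment of the equivariance bookkeeping via $d\vp_p$ and of the existence of a compact fundamental-type set $K$ is a more explicit rendering of what the paper leaves implicit, but the argument is the same.
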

\begin{proof}
	By definition, the maps $\cal{N}$ and $\cal{F}$ considered in
        Section~\ref{sec:setup} are invariant under the isometries in
        $\G$. Thus, the compactness arguments used in
        Section~\ref{sec:setup} (only in the proofs of
        Propositions~\ref{prop:implicit} and~\ref{prop:affine}) can be
        carried out under the assumption that $M/\G$ is compact and $\phi_p$ is $\G$-invariant. Also,
        by construction, the objects involved in the proof of
        Proposition~\ref{prop:critical_point} (namely
        $\cal{J}_\epsilon$, $a_{\epsilon,p}$, $b_{\epsilon_p}$) are
        $\G$-invariant, and hence the proof carries over to the case
        that $M/\G$ is compact. Similarly, the proofs of
        Theorems~\ref{T.ex1} and~\ref{T.ex2} remain valid under the hypothesis (a) since $\cal{G}$ is $\G$-invariant, giving rise to a $\G$-invariant function $\phi_p$ satisfying Assumption~A.
	
	Moreover, compactness was needed to ensure the existence of critical points of the functions $\cal{J}_\epsilon$. But, since $\cal{J}_\epsilon$ is invariant under $\G$, it induces a continuous map $\widetilde{\cal{J}}_\epsilon\colon M/\G\to\R$. Any preimage under the projection map $M\to M/\G$ of a maximum point for $\widetilde{\cal{J}}_\epsilon$ is also a maximum point for $\cal{J}_\epsilon$, and hence, a critical point for $\cal{J}_\epsilon$. This shows that Theorem~\ref{th:main2} holds under the assumptions (a) and (b). Combining this with the generalization of Theorems~\ref{T.ex1} and~\ref{T.ex2}, we deduce that Theorem~\ref{th:main} holds under condition (a).  
\end{proof}

\begin{remark}
	With the same arguments as above one can show that Theorems~\ref{th:main}, \ref{T.ex1}, \ref{T.ex2} and~\ref{th:main2}, and Proposition~\ref{cor:unique}, hold, not only for homogeneous spaces, but also for locally uniformly homogeneous spaces, that is, for those Riemannian manifolds $(M,g)$ such that there exist $R>0$ satisfying that, for any $p$, $q\in M$, there exists an isometry $\varphi\colon B^g_R(p)\to B^g_R(q)$. In this case, one has to restrict to position-independent nonlinearities $f(p,z)=G(z)$.
\end{remark}

Next we shall show how to extend the previous results to a broader
family of noncompact manifolds that do not need to have any global
isometries and whose nonlinearities can depend on the point in
the manifold, but for which, roughly speaking, there is a kind of
homogeneous behavior at infinity. The geometry at infinity of the manifold $M$ will be modeled by some homogeneous Riemannian manifold $M_\infty$ with metric $g_\infty$. In order to formalize this idea,
let us introduce the appropriate terminology.
% First, let us take some other Riemannian manifold $M_\infty$ with metric $g_\infty$, which we will use to model the geometry at infinity of our manifold $M$. 
% We assume that the injectivity radius at each point of $M$ is not smaller than some positive constant~$R$ and fix a point $p_\infty \in M_\infty$. 

\begin{definition}\label{def:asymp}
	We will say that the Riemannian $n$-manifold $(M,g)$ is
        \emph{asymptotically homogeneous} if there exists an
        $n$-dimensional homogeneous Riemannian manifold~$M_\infty$ with metric $g_\infty$, a point $p_\infty\in
        M_\infty$, $R>0$, and a
        function $\vp:M\times B_R^{g_\infty}(p_\infty)\to M$ with the
        property that for each $\delta>0$ there
        exists a relatively compact domain $M_\delta$ of~$M$
        such that for all $p\in M\setminus \overline{M}_\delta$, $\vp_p:=\vp(p,\cdot)$ defines a $\C^{2,\alpha}$-diffeomorphism
$\varphi_p\colon B^{g_\infty}_R(p_\infty)\to B^{g}_R(p)$
and
$\|\varphi_p^*g-g_\infty\|_{\C^{2,\alpha}(B^{g_\infty}_R(p_\infty))}<\delta$.
\end{definition}

Observe that, with this definition, the injectivity radius of an asymptotically homogeneous
manifold is always larger than some positive constant $R$.

\begin{remark}
	The notion we have just introduced can be regarded as a broad
        generalization of other well-known concepts. For example, if
        the model manifold $(M_\infty,g_\infty)$ in
        Definition~\ref{def:asymp} is a Euclidean space, then the
        above definition means that $(M,g)$ is asymptotically
        flat. Notice that, although there are several definitions of
        asymptotically flat manifolds in the literature, they usually
        imply the notion of asymptotically homogeneous manifold, as is
        the case, for instance, of Schoen and Yau's
        definition~\cite{SY:cmp}. Similarly, the notion of
        asymptotically hyperbolic manifold (see
        e.g.~\cite{JS:acta}) corresponds to taking as model manifold
        $(M_\infty,g_\infty)$ the real hyperbolic space. In any case, the choice of the point $p_\infty$ is
        inessential due to the isometries of~$(M_\infty,g_\infty)$.
\end{remark}

Now we need to define an analogous notion of ``asymptotic
homogeneity'' for the nonlinearity. To this end we will make the
following assumption:

\begin{assumptionB}	
$M$ is an asymptotically homogeneous manifold as above, the nonlinearity $f\in C^{1,\al}_{\mathrm{loc}}(M\times\R)$ satisfies Assumption~A with radial functions $\phi_p\in\C^{2,\alpha}\Dir(B_1)$ and constant $\bar{\lambda}$, %the overdetermined problem for the equation $\De_g u+ \la f(\cdot,u)=0$ on $M$ satisfies Assumption~A, 
and additionally there exist functions $G_\infty\in\C^{1,\alpha}_{\mathrm{loc}}(\R)$ and $\phi_\infty\in\C^{2,\alpha}\Dir(B_1)$ such that:
\begin{enumerate}
\item For each $\delta>0$ there exists a relatively compact 
  domain $M_\delta$ of $M$ such that 
\[
\|f(p,\cdot)-G_\infty\|_{\C^{1,\alpha}((-N,N))}+\|\phi_p-\phi_\infty\|_{\C^{2,\alpha}(B_1)}<\delta
\]
for all $p\in M\setminus\overline{M}_\delta$, where $N>0$ is such that, for all $p\in M$, 
\[
\|\phi_p\|_{L^\infty}+\|\phi_\infty\|_{L^\infty}<\frac{N}{2}.
\]
		\item The function $G_\infty$ also satisfies Assumption~A with the same constant $\bar{\lambda}$, the corresponding radial function on the unit ball being precisely $\phi_\infty$.%The overdetermined problem for the equation $\De_{g_\infty}u+\la G_\infty(u)=0$ on $M_\infty$  also satisfies Assumption~A with the same constant $\bar\la_\infty=\bar{\lambda}$, the corresponding radial     function on the unit ball being precisely          $\phi_\infty$.
                \end{enumerate}
              \end{assumptionB}

              The main result in this section  is that
              Theorem~\ref{th:main2} remains valid for asymptotically
              homogeneous manifolds with asymptotically homogeneous
              nonlinearities. It is clear that, just as before, the
              key is to prove the result under Assumption~B. 
              %This is              what we will do in the following theorem, in whose proof              we will include a couple of propositions. As we show              below, Theorem~\ref{T.asympt} is a straightforward              consequence of this result:
This is what we will do in the proof of the following theorem. 

\begin{theorem}\label{th:main_asymp}
Suppose that Assumption~B holds. Then for every small enough positive $\ep$ there exists a domain $\Om\subset M$ with volume equal to $|B_\ep|$ and a positive constant $\la$, which is of order $\ep^{-2}$, such that the overdetermined problem~\eqref{eq:onp}
admits a nontrivial nonnegative solution. The domain $\Omega$ is a
$\C^{2,\alpha}$-small perturbation of a geodesic ball of
radius~$\ep$.
\end{theorem}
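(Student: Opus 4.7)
The plan is to extend the three-step argument of Theorem~\ref{th:main2} to the noncompact setting by systematically replacing the role of the compactness of~$M$ with the asymptotic homogeneity provided by Assumption~B. The homogeneous model $(M_\infty,g_\infty)$ together with $G_\infty$ will play the role of a ``limit at infinity'' that supplies the uniform constants and the missing critical-point existence.

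First I would verify that Steps~1 and~2 of Section~\ref{sec:setup} still produce, for each small enough $\epsilon>0$ and each $p\in M$, the data $(v_{\epsilon,p},\hu_{\epsilon,p},a_{\epsilon,p})$ solving~\eqref{eq:onp_affineB1}, with bounds that are \emph{uniform in}~$p\in M$. All the implicit-function-theorem applications in Propositions~\ref{prop:implicit} and~\ref{prop:affine} depend quantitatively on (a)~the invertibility of $\Delta+f_z(p,\phi_p)$, (b)~the invertibility of $H_p$ on $(V_0\oplus V_1)^\perp$, and (c)~local $C^{2,\alpha}$ bounds on $\phi_p$. By Assumption~B these hold uniformly for $p\in \overline{M}_\delta$ (by continuity on a compact set) and for $p\in M\setminus\overline{M}_\delta$ (because $f(p,\cdot)$, $\phi_p$ and, after pulling back through $\varphi_p$, the local metric $\varphi_p^*g$ converge to their counterparts on $M_\infty$, which themselves satisfy Assumption~A). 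Choosing $\delta$ small, the implicit function theorem then yields neighborhoods that can be taken uniformly in~$p$, producing a globally defined $C^{1,\alpha}$ vector field $a_\epsilon$ on~$M$ exactly as in Proposition~\ref{prop:affine}.

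Next I would transport the variational argument of Step~3. Proposition~\ref{prop:critical_point} is purely local and its proof applies verbatim: $a_{\epsilon,p}=0$ if and only if $p$ is a critical point of the energy function $\mathcal{J}_\epsilon\colon M\to\R$. The new ingredient needed is the existence of a critical point on the noncompact manifold. Here the homogeneity of $(M_\infty,g_\infty)$ together with the $G_\infty$-invariance of the whole construction forces the analogous energy on $M_\infty$ to be a constant $J_\epsilon^\infty$ (independent of the base point). The asymptotic convergence of Assumption~B and the continuous dependence of $\hu_{\epsilon,p}$, $v_{\epsilon,p}$ on the data (which is a consequence of the uniform implicit function theorem above) then give
\[
\lim_{p\to\infty}\mathcal{J}_\epsilon(p)=J_\epsilon^\infty.
\]
Once this is in place, a soft topological argument concludes: if $\mathcal{J}_\epsilon\not\equiv J_\epsilon^\infty$, then either $\sup_M\mathcal{J}_\epsilon>J_\epsilon^\infty$, in which case the supremum is attained at an interior maximum, or $\inf_M\mathcal{J}_\epsilon<J_\epsilon^\infty$, in which case the infimum is attained at an interior minimum; in either case we obtain a critical point of $\mathcal{J}_\epsilon$ and hence a zero of $a_\epsilon$. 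If instead $\mathcal{J}_\epsilon\equiv J_\epsilon^\infty$, every point of $M$ is critical and we are done trivially.

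The main obstacle I anticipate is the quantitative comparison of the problems at $p\in M\setminus\overline{M}_\delta$ and at $p_\infty\in M_\infty$. One must pull back the whole Dirichlet--Neumann construction through the diffeomorphism $\varphi_p\colon B_R^{g_\infty}(p_\infty)\to B_R^g(p)$, check that the perturbation $\varphi_p^*g-g_\infty$ is $C^{2,\alpha}$-small and that $f(p,\cdot)-G_\infty$ is $C^{1,\alpha}$-small uniformly on the range of $\hu$, and propagate these through the implicit function theorems to obtain $C^{2,\alpha}$-closeness of $\hu_{\epsilon,p}$ to its $M_\infty$-counterpart and, from there, convergence of the energies. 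All other steps are essentially formal adaptations of the compact case, with Proposition~\ref{prop:critical_point} requiring only that the various constants appearing in its proof depend on norms of geometric quantities that are controlled uniformly by Assumption~B.
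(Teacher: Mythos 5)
Your proposal is correct and follows essentially the same strategy as the paper: split $M$ into a compact part (handled as in the compact case) and a complement where, via pullback through $\varphi_p$, the data $(\varphi_p^*g,\, f(\varphi_p(\cdot),\cdot))$ live in a $\delta$-neighborhood of $(g_\infty,G_\infty)$ in the relevant Banach spaces, apply the implicit function theorem uniformly there, and conclude by noting that $\mathcal{J}_\epsilon$ is asymptotically constant so it attains an interior extremum (or is globally constant). The only cosmetic difference is that the paper organizes the uniformity by making the metric and nonlinearity themselves Banach-space parameters in the implicit function theorem at $(g_\infty,G_\infty,0,0,0,\phi_\infty)$ and then composing with the maps $\rho(p)=\varphi_p^*g$, $\sigma(p)=f(\varphi_p(\cdot),\cdot)$, whereas you describe the same estimate-tracking more informally; you also (correctly) note that the proof of Proposition~\ref{prop:critical_point} needs its constants to be uniform in $p$, which Assumption~B provides, so "applies verbatim'' in your earlier phrasing should be read with that caveat.
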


Before proving this result, we state and prove the following consequence of Theorem~\ref{th:main_asymp}. Roughly speaking, it guarantees that Theorem~\ref{th:main_asymp} can be applied to the appropriate modifications of the two types of nonlinearities studied in Section~\ref{S.ass}.

\begin{theorem}\label{th:asympt_types}
	Let $(M,g)$ be an asymptotically homogeneous manifold. Let $f$ be a function in $C^{1,\al}\loc(M\times\RR)$ such
	that,  as $p\to \infty$,
	$f(p,z)$ tends to a certain function $G_\infty(z)$ locally in the
	$C^{1,\al}$-norm in the sense that for any $\delta>0$ and any compact interval
	$I\subset\RR$ there exists a relatively compact domain $M_{I,\delta}$ of $M$ such that
	\[
	\|f(p,\cdot)-G_\infty\|_{C^{1,\al}(I)}<\delta,
	\]
	for all $p\in M\setminus \overline{M}_{I,\delta}$.
	Suppose that one of the following assumptions on $f$ holds
	\begin{enumerate}[{\rm (i)}]
		\item $\displaystyle\inf_{p\in M} f(p,0)>0$, or
		\item $f\in C^{2}\loc(M\times\R)$ converges to $G_\infty\in C^2\loc(\R)$ locally in the $C^2$-norm, $f(p,0)=0$,  $f_z(p,0)=c>0$ and $\displaystyle\inf_{p\in M}|f_{zz}(p,\cdot)|> 0$ for all~$p\in M$, where $c$ is a constant independent of $p$.
	\end{enumerate}
	
	Then for every small enough~$\ep$
	there exists  a domain $\Om\subset M$ with volume equal to $|B_\ep|$ and a positive constant $\la$, which is of order $\ep^{-2}$, such that the overdetermined problem~\eqref{eq:onp}
	admits a nontrivial nonnegative solution. The domain $\Omega$
        is a
	$\C^{2,\alpha}$-small perturbation of a geodesic ball of
	radius~$\ep$.
\end{theorem}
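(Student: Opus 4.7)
The strategy is to reduce Theorem~\ref{th:asympt_types} to Theorem~\ref{th:main_asymp} by verifying that in each case the hypotheses imply Assumption~B. This amounts to producing, for a common constant $\bar\la$, radial functions $\phi_p$ and $\phi_\infty$ on $B_1$ that solve the Dirichlet problem for $f(p,\cdot)$ and $G_\infty$ respectively, satisfy all three clauses of Assumption~A uniformly in~$p$, and such that $\phi_p\to\phi_\infty$ in $\C^{2,\al}(B_1)$ as $p\to\infty$.

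In case~(i) the entire argument of Theorem~\ref{T.ex1} carries over. Its implicit-function-theorem step only requires a positive lower bound on $f(p,0)$ and a uniform bound on $\|f_z(p,\cdot)\|_{L^\infty([0,1])}$, both of which are available here: the former is our hypothesis, and the latter follows from the $\C^{1,\al}\loc$ convergence of $f(p,\cdot)$ to $G_\infty$ together with the continuity of $G_\infty$ on $[0,1]$. Hence one obtains $\phi_p$ and $\phi_\infty$ uniformly for all $\bar\la$ below a threshold independent of $p$, and the convergence $\phi_p\to\phi_\infty$ in $\C^{2,\al}$ is a direct consequence of the continuous dependence of the implicit function theorem's solution on its parameters. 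The coercivity and Hopf-lemma arguments of Theorem~\ref{T.ex1} then secure the remaining clauses of Assumption~A uniformly, so Assumption~B holds and the conclusion follows from Theorem~\ref{th:main_asymp}.

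In case~(ii) the bifurcation value $\la_0=\la_1(B_1)/c$ is the same for every $p$ and for $G_\infty$ since $c$ is independent of $p$, so the Crandall--Rabinowitz argument of Theorem~\ref{T.ex2} produces branches $s\mapsto(\la_{p,s},\phi_{p,s})$ and $s\mapsto(\la_{\infty,s},\phi_{\infty,s})$ emanating from $(\la_0,0)$. The explicit derivative formula
\[
\pd_s\la_{p,0}=-\frac{\la_0\, f_{zz}(p,0)}{2c}\int_{B_1}\vp_1^3,
\]
derived in the proof of Theorem~\ref{T.ex2}, together with the hypothesis $\inf_{p\in M}|f_{zz}(p,0)|>0$ and the $C^2\loc$ convergence $f\to G_\infty$, shows that $\pd_s\la_{p,0}$ is uniformly bounded away from zero, converges to $\pd_s\la_{\infty,0}$, and in particular has constant sign (namely $-\mathrm{sign}\,G_\infty''(0)$) for all $p$ outside some compact subset of~$M$. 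Picking $\bar\la$ close to but distinct from $\la_0$ on the appropriate side, one can uniformly invert $\la_{p,s}=\bar\la$ to obtain $s_p\to s_\infty\neq 0$, and then set $\phi_p:=\phi_{p,s_p}$ and $\phi_\infty:=\phi_{\infty,s_\infty}$. Conditions (ii) and (iii) of Assumption~A for each $\phi_p$ are established exactly as in the proof of Theorem~\ref{T.ex2}, the lower bound on the operator $H_{p,s_p}$ supplied by Proposition~\ref{prop:H}(d) being uniform in~$p$ thanks to the $C^2$ convergence. This verifies Assumption~B, whence Theorem~\ref{th:main_asymp} applies.

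The principal difficulty, present only in case~(ii), is organizing the bifurcation analysis uniformly in~$p$: one must choose a single $\bar\la$ lying simultaneously on every curve $s\mapsto\la_{p,s}$ as well as on the curve for $G_\infty$. The uniform transversality provided by $\inf_p|f_{zz}(p,0)|>0$, combined with the $C^2$ (rather than merely $C^{1,\al}$) convergence of $f$ to $G_\infty$, is precisely what makes this simultaneous inversion possible; once in place, the remainder of the proof is a uniform version of the arguments already developed in Sections~\ref{S.ass} and~\ref{sec:setup}, applied through the asymptotically homogeneous variant encoded in Theorem~\ref{th:main_asymp}.
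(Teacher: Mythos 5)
Your proposal follows essentially the same route as the paper: both reduce the statement to Theorem~\ref{th:main_asymp} by verifying Assumption~B, noting that $G_\infty$ falls under Theorems~\ref{T.ex1} and~\ref{T.ex2} and that compactness of $M$ entered those proofs only to make the smallness conditions on $\bar\lambda$ (resp.\ $|\bar\lambda-\lambda_0|$) uniform in $p$, a role now played by the locally uniform convergence $f(p,\cdot)\to G_\infty$ combined with continuous dependence in the implicit function and Crandall--Rabinowitz arguments, which also yields $\|\phi_p-\phi_\infty\|_{\C^{2,\alpha}(B_1)}$ small. Your case~(ii) discussion of selecting one $\bar\lambda$ lying on all bifurcation branches is in fact more explicit than the paper's brief appeal to continuous dependence; just note that the constancy of the sign of $f_{zz}(\cdot,0)$ should be invoked on all of $M$ (it holds by connectedness and continuity, exactly as implicitly used in Theorem~\ref{T.ex2}), not merely outside a compact set.
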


\begin{proof}
	We just have to prove that the nonlinearity $f$ satisfies Assumption~B for each one of the cases (i) and (ii) in the statement.
	
Since $f$ tends to $G_\infty$ locally in the $C^{1,\al}$-norm (or in the $C^2$-norm for case (ii)), it follows that $G_\infty$ satisfies one of the two conditions analyzed in Section~\ref{S.ass}. Thus, Theorems~\ref{T.ex1} and~\ref{T.ex2} applied to the nonlinearity $G_\infty$ (which is independent of $p\in M$) guarantee the existence of $\phi_{\infty,\bar{\lambda}}\in C^{2,\al}\Dir(B_1)$ satisfying Assumption~A, with nonlinear eigenvalue $\bar{\lambda}>0$ in an open interval with endpoint $0$ for case (i) or $\lambda_{0}=\lambda_1(B_1)/c$ for case (ii).

In the proofs of Theorems~\ref{T.ex1} and~\ref{T.ex2}, the only point where we needed the compactness assumption was at the very end of the proofs, in order to guarantee that the smallness conditions imposed on $\bar{\lambda}$ (for case (i)) or $|\bar{\lambda}-\lambda_0|$ (for case (ii)) were uniform in $p\in M$. Now $M$ is no longer compact, but the function $f$ tends to a function $G_\infty$ that satisfies Assumption~A. Thus, using the continuous dependence in the implicit function theorem and the Crandall-Rabinowitz theorem used in Theorems~\ref{T.ex1} and~\ref{T.ex2}, respectively, both results also hold in the current setting. That is, the nonlinearity $f$ satisfies Assumption~A for each $\bar{\lambda}>0$ in an open interval (with endpoint $0$ for case (i) or $\lambda_{0}$ for case (ii)) and $\phi_{p,\bar{\lambda}}\in C^{2,\al}\Dir(B_1)$ smoothly depending on $p\in M$. Moreover, by construction, the functions $\phi_{\infty,\bar{\lambda}}$ and $\phi_{p,\bar{\lambda}}$ have small $C^{2,\al}$-norm. Hence, we can take $N>0$ such that $\|\phi_{p,\bar{\lambda}}\|_{L^\infty}+\|\phi_{\infty,\bar{\lambda}}\|_{L^\infty}<N/2$, for all $p\in M$ and all $\bar{\lambda}$ in some open interval.

 The fact that
$\|f(p,\cdot)-G_\infty\|_{\C^{1,\alpha}((-N,N))}<\de$ for all $p$ outside a
compact subset $\overline{M}_\delta$ of~$M$ follows directly from the hypothesis of locally uniform $C^{1,\al}$-convergence
of $f(p,\cdot)$ to $G_\infty$. In view of this convergence, the fact that $\|\phi_{p,\bar{\lambda}}-\phi_{\infty,\bar{\lambda}}\|_{\C^{2,\alpha}(B_1)}$ 
is also small follows again from the continuous dependence in the arguments used in Theorems~\ref{T.ex1} and~\ref{T.ex2}, respectively. This shows that $f$ satisfies Assumption~B for each $\bar{\lambda}$ in some open interval.
\end{proof}

Notice that Theorem~\ref{T.asympt} in the Introduction is the particular case of the previous Theorem~\ref{th:asympt_types} for $(M,g)=(\R^n,g_0)$, where $g_0$ is the Euclidean metric. Clearly, the Euclidean space is a homogeneous Riemannian manifold so, in particular, it is asymptotically homogeneous. Indeed, in this case, in the definition of asymptotic homogeneity one can take $(M_\infty,g_\infty)$ to be the Euclidean space~$(\RR^n,g_0)$ and $B^{g_\infty}_R(p_\infty)$ to be the unit ball
centered at the origin. The function~$\vp:\RR^n\times B\to\RR^n$ is
\[
\vp(p,z):=p+z\,,
\]
so $\varphi_p=\varphi(p,\cdot)$ is an isometry onto its image, for each $p\in\R^n$. Similarly, Theorem~\ref{th:asympt_types} applies to all noncompact homogeneous Riemannian manifolds, including, as a very specific but important case, the hyperbolic spaces. Other important examples of manifolds to which Theorem~\ref{th:asympt_types} applies are the symmetric spaces of noncompact type, as well as all known examples of noncompact harmonic spaces (see Section~\ref{sec:symmetry} for more information on these spaces).

\medskip
Now we proceed with the proof of Theorem~\ref{th:main_asymp}, where we include a couple of propositions.

\begin{proof}[Proof of Theorem~\ref{th:main_asymp}]
We will follow the ideas of the proof of Theorem~\ref{th:main2}. We start by adapting the proof of Proposition~\ref{prop:implicit} to the new setting. Notice that the difficulty is to show that it is possible to take a bound $\delta$ in Proposition~\ref{prop:implicit} that does not go to zero as $p\in M$ goes to infinity.

\begin{proposition}\label{prop:implicit_asymp}
	The claims in Proposition~\ref{prop:implicit} hold also in the
        case that the Riemannian manifold $M$ is asymptotically homogeneous.
\end{proposition}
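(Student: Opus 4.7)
The plan is to extend the proof of Proposition~\ref{prop:implicit} by replacing the compactness argument at the end with a two-piece argument that exploits Assumption~B. Recall that compactness was only used to promote the pointwise neighborhood of applicability of the implicit function theorem at each $p$ to a uniform neighborhood of size $\delta_0>0$.

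The strategy splits $M$ according to Assumption~B(1). Fix $\delta>0$ small and consider the compact set $\overline{M}_\delta\subset M$; on $\overline{M}_\delta$, the original argument of Proposition~\ref{prop:implicit} applies verbatim and yields a uniform $\delta_0^{(1)}>0$ by the compactness of $\overline{M}_\delta$. For $p\in M\setminus \overline{M}_\delta$, I would view the problem through the lens of the model: Assumption~B guarantees that $\phi_p$ is $C^{2,\alpha}$-close to $\phi_\infty$ and $f(p,\cdot)$ is $C^{1,\alpha}$-close to $G_\infty$ on $(-N,N)$, while asymptotic homogeneity (Definition~\ref{def:asymp}) provides a $C^{2,\alpha}$-diffeomorphism $\varphi_p\colon B^{g_\infty}_R(p_\infty)\to B^g_R(p)$ whose pullback metric $\varphi_p^* g$ is $C^{2,\alpha}$-close to $g_\infty$. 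Since $G_\infty$ itself satisfies Assumption~A with radial solution $\phi_\infty$ (Assumption~B(2)), the ``limit operator'' $D_{(v_0,\psi)}\cal{N}_\infty(0,0,0,\phi_\infty)$, formed using the model metric $g_\infty$ and nonlinearity $G_\infty$, is invertible as a map $\R\times \C^{2,\alpha}\Dir(B_1)\to \C^{0,\alpha}(B_1)\times \R$, by exactly the computation that produced invertibility in Proposition~\ref{prop:implicit}.

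The central step is then a perturbation argument: for $\delta$ small enough, the invertibility of $D_{(v_0,\psi)}\cal{N}_\infty(0,0,0,\phi_\infty)$ together with the closeness provided by Assumption~B and asymptotic homogeneity ensures that $D_{(v_0,\psi)}\cal{N}(p,0,0,0,\phi_p)$ is also invertible with uniformly bounded inverse for all $p\in M\setminus \overline{M}_\delta$, by a standard Neumann series argument. Combined with uniform $C^{1,\alpha}$-bounds on $f(p,\cdot)$ on compact intervals and uniform $C^{2,\alpha}$-bounds on the metric $\bar g$ in normal coordinates (both consequences of Assumption~B and Definition~\ref{def:asymp}), the quantitative version of the implicit function theorem yields a uniform $\delta_0^{(2)}>0$ that works for all such $p$. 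Setting $\delta_0:=\min\{\delta_0^{(1)},\delta_0^{(2)}\}$ then completes the proof, including the smoothness in $(p,\epsilon,\bar v)$ and the uniform bound $|v_0|+\|\hu-\phi_p\|_{\C^{2,\alpha}(B_1)}<C\delta$.

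The main obstacle is making the perturbation estimate uniform in $p$: one must verify that the entire family $\{D_{(v_0,\psi)}\cal{N}(p,0,0,0,\phi_p)\}_{p\in M\setminus \overline{M}_\delta}$ is close, in the operator norm on the relevant Hölder spaces, to the single invertible operator built from the model data, so that the Neumann-series bound on the inverse is independent of~$p$. This reduces to tracking how each $p$-dependent ingredient entering $\cal{N}$---the metric components of $g$ near $p$ (pulled back via $\varphi_p$), the nonlinearity $f(p,\cdot)$, and the base solution $\phi_p$---approaches its model counterpart in the topologies required for the elliptic estimates to depend continuously on the data, which is exactly what Assumption~B and the definition of asymptotic homogeneity are designed to provide.
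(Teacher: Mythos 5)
Your proposal is correct and follows essentially the same strategy as the paper: handle a compact piece $\overline{M}_\delta$ by Proposition~\ref{prop:implicit} and compactness, and handle the far region by perturbing off the model data $(g_\infty,G_\infty,\phi_\infty)$ at infinity, whose linearized operator is invertible by Assumption~B(ii), with the required closeness supplied by Assumption~B(i) and Definition~\ref{def:asymp}. The only difference is one of packaging: the paper obtains uniformity in $p$ by a single application of the implicit function theorem in which the metric $h$ and the nonlinearity $s$ are themselves Banach-space parameters (applied at $(g_\infty,G_\infty,0,0,0,\phi_\infty)$ and then composed with the smooth maps $p\mapsto\varphi_p^*g$ and $p\mapsto f(\varphi_p(\cdot),\cdot)$), whereas you run a quantitative implicit function theorem at each far point with a Neumann-series bound on the inverse of $D_{(v_0,\psi)}\cal{N}(p,0,0,0,\phi_p)$; the two devices are interchangeable here, and your use of $\|\phi_p-\phi_\infty\|_{\C^{2,\alpha}(B_1)}<\delta$ plays the same role as in the paper in ensuring the uniform bound and the smooth dependence on $p$.
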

\begin{proof}
Let $\cal{M}$ denote the convex open cone of Riemannian metrics inside
the Banach space of all $\C^{2,\alpha}$-symmetric tensors on the geodesic ball $B:=B_R^{g_\infty}(p_\infty)$ given in Definition~\ref{def:asymp}. We will employ the usual notations; for instance, given $h\in \cal{M}$ we define $\bar{h}=\epsilon^{-2}(\exp^h_{p_\infty}\circ\, T_\epsilon)^*h$, we consider the parametrization $Y\colon B_1\subset T_{p_\infty} B\to B_{\epsilon(1+v)}^{h}(p_\infty)$ as in~\eqref{eq:Y}, the metric $\widehat{h}=\beta^*\bar{h}$ on $B_1$, and the function $\widehat{s}(y,z)=s(Y(y),z)$, $y\in B_1$, $z\in\R$. 
Consider the smooth map
\[
\begin{array}{r@{\hspace{-0.05ex}}c@{\hspace{-0.01ex}}cl}
	\cal{N}\colon& \cal{M}'\times\C^{1,\alpha}(B\times(-N,N))\times\cal{U}&\to&\C^{0,\alpha}(B_1)\times\R
	\\
	&(h,s,\epsilon,\bar{v},v_0,\psi) & \mapsto & \bigl(\Delta_{\widehat{h}}\psi+\widehat{s}(\,\cdot\,,\psi),\;\vol_{\widehat{h}}(B_1)-|B_1|\bigr),
\end{array}
\]
where $\cal{M}'$ is an open neighborhood of $g_\infty$ in $\cal{M}$ and $\cal{U}$ is an open neighborhood of $(0,0,0,0)$ in
$[0,\infty)\times\C^{2,\alpha}\avg(S^{n-1})\times\R\times C\Dir^{2,\alpha}(B_1)$ such that
$\epsilon(1+\bar{v}+v_0)<R$  and $\|\psi\|_{L^\infty}<N$, for all $h\in \cal{M}'$ and $(\epsilon,\bar{v},v_0,\psi)\in\cal{U}$; recall that $N>0$ was introduced in Assumption~B.

 Just as in the
proof of Proposition~\ref{prop:implicit}, we can apply the implicit
function theorem to $\cal{N}$ at the point
$(g_\infty,G_\infty,0,0,0,\phi_\infty)$, where $g_\infty$ is the
metric of $M_\infty$, $G_\infty$ and $\phi_\infty$ are as in
Assumption~B, and $G_\infty$ is to be understood as a function defined
on $B\times\R$, restricted to $B\times (-N,N)$, but which is independent of the point of~$B$. Therefore, for each $(h,s,\epsilon,\bar{v})$ in a
neighborhood~$\cal{V}$ of $(g_\infty,G_\infty,0,0)$ in $\cal{M}\times
\C^{1,\alpha}(B\times(-N,N))\times[0,\infty)\times
\C^{2,\alpha}\avg(S^{n-1})$, there exists a unique $(v_0,\hu)$,
smoothly depending on $(h,s,\epsilon,\bar{v})$, in a neighborhood of
$(0,\phi_\infty)$ in $\R\times\C\Dir^{2,\alpha}(B_1)$ such that
$\cal{N}(h,s,\epsilon,\bar{v},v_0,\hu)=0$. We can assume that for some
$\delta>0$ the neighborhood $\cal{V}$ is given by those
$(h,s,\epsilon,\bar{v})$ satisfying 
\[
\max\{\|h-g_\infty\|_{\C^{2,\alpha}(B)}, \;
\|s-G_\infty\|_{\C^{1,\alpha}(B\times(-N,N))}, \; \epsilon,\; 
\|\bar{v}\|_{\C^{2,\alpha}(S^{n-1})}\}<\delta\,.
\]

For this $\delta$, let $M_\delta$ be a relatively compact domain in
$M$ that satisfies simultaneously the condition stated in
Definition~\ref{def:asymp} and item~(i) in Assumption~B, and let
$M'_\delta\supset M_\delta$ be another relatively compact domain such
that if $p\in M\setminus \overline{M}'_\delta$, then $B^g_R(p)\subset
M\setminus \overline{M}_\delta$. Then the smooth maps $\rho\colon
M\setminus \overline{M}'_\delta\to\cal{M}$ and $\sigma\colon
M\setminus
\overline{M}'_\delta\to\C^{1,\alpha}(B\times(-N,N))$
defined by
\[
\rho(p):= \varphi_p^*g\,,\qquad \si(p):= f(\varphi_p(\cdot),\cdot)
\]
satisfy that 
$$
\|\rho(p)-g_\infty\|_{\C^{2,\alpha}(B)}<\delta\,,\qquad
\|\sigma(p)-G_\infty\|_{\C^{1,\alpha}(B\times(-N,N))}<\delta\,.
$$
Therefore, for each $(p,\epsilon,\bar{v})$ with $p\in M\setminus
\overline{M}'_\delta$, $\epsilon<\delta$ and
$\|\bar{v}\|_{\C^{2,\alpha}(S^{n-1})}<\delta$, it is easy to see that
one can take $(v_0,\hu)$ depending smoothly on $p$, $\epsilon$ and
$\bar{v}$: just define 
$$
v_0(p,\epsilon,\bar{v}):=v_0(\rho(p),\sigma(p),\epsilon,\bar{v})\,,\qquad
\hu(p,\epsilon,\bar{v})=\hu(\rho(p),\sigma(p),\epsilon,\bar{v})\,.
$$
Combining this with Proposition~\ref{prop:implicit} applied to
$\overline{M}'_\delta$, and using the hypothesis on the smallness of $\|\phi_p-\phi_\infty\|_{\C^{2,\alpha}(B_1)}$ in
the first item of Assumption~B to derive the smooth dependence of $(v_0,\hu)$ on $p\in M$, one concludes the proof.
\end{proof}

\begin{proposition}\label{prop:affine_asymp}
	The claim in Proposition~\ref{prop:affine} also holds in the case that $M$ is an asymptotically homogeneous Riemannian manifold.
\end{proposition}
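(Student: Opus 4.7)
The plan is to follow the strategy already used in Proposition~\ref{prop:implicit_asymp}: enlarge the implicit function theorem setup so that the Riemannian metric and the nonlinearity appear as independent parameters, solve the problem uniformly around the model data $(g_\infty,G_\infty)$, and then glue this asymptotic construction with a direct application of Proposition~\ref{prop:affine} on a relatively compact piece of $M$. The reason this works is that Assumption~B(ii) forces $G_\infty$ itself to satisfy Assumption~A, with associated radial profile $\phi_\infty$, so the ``model'' linearization that governs the implicit function theorem step is invertible.

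Concretely, I would consider the augmented map
\[
\bar{\cal{F}}(h,s,\epsilon,\bar{v})=\bigl(A(h,\epsilon,\bar{v}+v_0(h,s,\epsilon,\bar{v})),\; Q\,\cal{F}(h,s,\epsilon,\bar{v})\bigr),
\]
where $h$ ranges over a neighborhood of $g_\infty$ in $C^{2,\alpha}$-metrics on the ball $B_R^{g_\infty}(p_\infty)$, $s$ ranges over a neighborhood of $G_\infty$ in $C^{1,\alpha}(B_R^{g_\infty}(p_\infty)\times(-N,N))$, and $(v_0,\hu)$ are the functions produced by Proposition~\ref{prop:implicit_asymp}. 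At the model point $(g_\infty,G_\infty,0,0)$ one has $\bar{\cal{F}}=(0,0)$, and the differential $D_{\bar{v}}\bar{\cal{F}}$ there is exactly the operator analyzed in the proof of Proposition~\ref{prop:affine}, built from $\phi_\infty$ and $G_\infty$ instead of $\phi_p$ and $f(p,\cdot)$. Since $G_\infty$ satisfies Assumption~A, Proposition~\ref{prop:H}(d) gives $\ker H_\infty=V_1$, and the extra $A$-component, whose derivative is computed by the formula~\eqref{eq:dA}, identifies $V_1$ with $\R^n$. Hence $D_{\bar{v}}\bar{\cal{F}}$ is invertible at the model point, and the implicit function theorem yields unique $\bar{v}(h,s,\epsilon)$ and $a(h,s,\epsilon)$, smoothly depending on their arguments, throughout a neighborhood
\[
\cal{V}=\bigl\{\|h-g_\infty\|_{C^{2,\alpha}}+\|s-G_\infty\|_{C^{1,\alpha}(B\times(-N,N))}+\epsilon+\|\bar{v}\|_{C^{2,\alpha}(S^{n-1})}<\delta\bigr\}.
\]

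With $\delta$ fixed by this argument, choose a relatively compact domain $\overline{M}'_\delta\subset M$ such that, for every $p\in M\setminus\overline{M}'_\delta$, the pulled-back data
\[
\rho(p):=\varphi_p^{\,*}g,\qquad \sigma(p):=f(\varphi_p(\cdot),\cdot)
\]
lie in the $\delta$-neighborhoods of $g_\infty$ and $G_\infty$; this is possible by Definition~\ref{def:asymp} combined with Assumption~B(i). Setting $\bar{v}_{\epsilon,p}:=\bar{v}(\rho(p),\sigma(p),\epsilon)$ and $a_{\epsilon,p}:=a(\rho(p),\sigma(p),\epsilon)$ then produces the objects required by the statement for all $p\in M\setminus\overline{M}'_\delta$ and all $\epsilon\in[0,\epsilon_\infty)$, with $\epsilon_\infty>0$ independent of $p$. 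On the relatively compact piece $\overline{M}'_\delta$, the original Proposition~\ref{prop:affine} applies verbatim and furnishes the same objects with a uniform bound $\epsilon_c>0$. Taking $\epsilon_0:=\min\{\epsilon_c,\epsilon_\infty\}$ and invoking the uniqueness clause of the implicit function theorem on the overlap yields globally defined maps $\bar{v}_{\epsilon,p}$ and $a_{\epsilon,p}$.

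The smoothness of the construction in the parameters $(h,s,\epsilon)$, together with the smooth dependence of $\rho(p)$ and $\sigma(p)$ on $p\in M\setminus\overline{M}'_\delta$, ensures that $p\mapsto a_{\epsilon,p}$ is of class $C^{1,\alpha}$ on the asymptotic region, while Proposition~\ref{prop:affine} supplies the same regularity on the compact core. The step I expect to require the most care is verifying that the two constructions agree on the overlap $\overline{M}'_\delta\setminus M_\delta$ (so that $a_\epsilon$ is a globally well-defined $C^{1,\alpha}$-vector field), but this is forced by the local uniqueness built into both applications of the implicit function theorem. With $a_\ep$ in hand, the center-of-mass normalization and the identity $\cal{F}(p,\epsilon,\bar{v}_{\epsilon,p})+\langle a_{\epsilon,p},\cdot\rangle=0$ follow exactly as in the compact case.
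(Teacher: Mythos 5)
Your proposal is correct and follows essentially the same route as the paper: the same augmented map $\bar{\cal{F}}(h,s,\epsilon,\bar{v})$ with the metric and nonlinearity as parameters, the same application of the implicit function theorem at $(g_\infty,G_\infty,0,0)$ using Propositions~\ref{prop:H} and~\ref{prop:linearizationF} and the fact that $G_\infty$ satisfies Assumption~A, the same reparametrization via $\rho(p)=\varphi_p^*g$, $\sigma(p)=f(\varphi_p(\cdot),\cdot)$ on the asymptotic region, and the same gluing with Proposition~\ref{prop:affine} applied to the relatively compact core. Your explicit appeal to the uniqueness clause of the implicit function theorem to reconcile the two constructions on the overlap is in fact slightly more careful than the paper's terse concluding sentence.
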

\begin{proof}
	We consider the operator $\cal{F}$ introduced in~\eqref{eq:F} but, instead of being defined in an open set of $M\times[0,\infty)\times\C^{2,\alpha}\avg(S^{n-1})$, we define it on the open set $\cal{V}$ introduced in the proof of Proposition~\ref{prop:implicit_asymp} above. We also adapt the definition of $A$, which now should be defined in $\cal{V}$. In particular, we have $(\exp^h_{p_\infty})^{-1}(c(h,s,\epsilon,\bar{v}))=\epsilon A(h,s,\epsilon,\bar{v})$, where $c(h,s,\epsilon,\bar{v})$ is the center of mass of $\partial B_{\epsilon(1+\bar{v}+v_0)}^h({p_\infty})$. 
	
	Thus, the proof of Proposition~\ref{prop:affine} carries over to the new setting. We define the smooth map
	\[
	\bar{\cal{F}}(h,s,\epsilon,\bar{v}):=(A(h,s,\epsilon,\bar{v}),Q \cal{F}(h,s,\epsilon,\bar{v})),
	\]
	and we apply the implicit function theorem to $\bar{\cal{F}}$
        at the point $(g_\infty,G_\infty,0,0)$, using
        Propositions~\ref{prop:H} and~\ref{prop:linearizationF}. This
        implies the existence of a new $\delta>0$ such that, for each
        metric $h$ with $\|h-g_\infty\|_{\C^{2,\alpha}(B)}<\delta$,
        each function $s$ with
        $\|s-G_\infty\|_{\C^{1,\alpha}(B\times(-N,N))}<\delta$, and each
        $\epsilon<\delta$, there exist
        $\bar{v}\in\C^{2,\alpha}\avg(S^{n-1})$ and $a\in T_{p_\infty} B$,
        smoothly depending on $h$, $s$ and $\epsilon$, such that
        $A(h,s,\epsilon,\bar{v})=0$ and 
$$
\cal{F}(h,s,\epsilon,\bar{v})+\langle a,\cdot\rangle=0.
$$
	
	For such $\delta$, let $M_\delta$ be a relatively compact
        domain in $M$ that satisfies simultaneously the condition
        stated in Definition~\ref{def:asymp} and item~(i) in
        Assumption~B, and $M'_\delta\supset M_\delta$ another
        relatively compact domain such that if $p\in M\setminus
        \overline{M}'_\delta$, then $B^g_R(p)\subset M\setminus
        \overline{M}_\delta$. Again, the smooth maps $\rho$ and
        $\sigma$ introduced in the proof of
        Proposition~\ref{prop:implicit_asymp} satisfy that
        $$
\|\rho(p)-g_\infty\|_{\C^{2,\alpha}(B^{g_\infty}_R(p_\infty))}<\delta\,,\qquad
\|\sigma(p)-G_\infty\|_{\C^{1,\alpha}(B\times(-N,N))}<\delta\,.
$$
        Therefore, for each $p\in M\setminus
        \overline{M}'_\delta$ and $\epsilon<\delta$, we can define 
\[
\bar{v}_{\epsilon,p}:=\bar{v}(\rho(p),\sigma(p),\epsilon)\,,\qquad
a_{\epsilon,p}:=a(\rho(p),\sigma(p),\epsilon)\,,
\]
which depend smoothly on $p$ and $\epsilon$, and satisfy that 
\[
\cal{F}(p,\epsilon,\bar{v}_{\epsilon,p})+\langle
a_{\epsilon,p},\cdot\rangle=0
\]
and the center of mass of $\partial B^g_{\epsilon(1+v)}(p)$ is~$p$. Combining this with Proposition~\ref{prop:implicit} applied to $\overline{M}'_\delta$, one concludes the proof.
\end{proof}

In order to conclude the adaptation of the proof of Theorem~\ref{th:main2} to the asymptotically homogeneous setting, one just has to deal with the proof of Proposition~\ref{prop:critical_point}. Similarly as above, the idea is to analyze separately the situation in a sufficiently large $\overline{M}_\delta$ and in its complement $M\setminus\overline{M}_\delta$. The proof of Proposition~\ref{prop:critical_point} applies directly to the former, and yields the existence of a suitable $\epsilon_{0,\overline{M}_\delta}$. To deal with the latter, one just has to notice that the norms of the objects $a_{\epsilon,p}$ and $b_{\epsilon, p}$ in the proof of Proposition~\ref{prop:critical_point} differ from constants $a_\epsilon$ and $b_\epsilon$ as little as desired, by taking $\delta>0$ small enough. This yields the existence of $\epsilon_{0,M\setminus \overline{M}_\delta}>0$ in the conditions of the statement of the mentioned proposition, whenever $p\in M\setminus \overline{M}_\delta$. Taking $\epsilon_0$ as the minimum of $\epsilon_{0,\overline{M}_\delta}$ and $\epsilon_{0,M\setminus \overline{M}_\delta}$, we are done. 

Finally, we argue how the asymptotically homogeneous assumption guarantees the existence of solutions to the overdetermined
problem~\eqref{eq:onp}. Indeed, by construction, the function
$\cal{J}_\epsilon$, for each $\epsilon<\epsilon_0$, is smooth on
$M$. Also by construction, $\cal{J}_\epsilon$ is asymptotically
constant, that is, there exists $c_\epsilon\in\R$ such that for each
$\delta>0$ there is a relatively compact domain $M_\delta$ in $M$
satisfying $|\cal{J}_\epsilon(p)-c_\epsilon|<\delta$, for all $p\in
M\setminus M_\delta$. Then, a standard argument guarantees the
existence of an extreme value for $\cal{J}_\epsilon$ in~$M$. Hence,
$\cal{J}_\epsilon$ has a critical point, which implies the existence
of solutions to~\eqref{eq:onp}.

Hence we infer that if Assumption~B holds, there exist $\epsilon_0>0$
and a smooth function $\cal{J}_\epsilon\colon M\to\R$ for each
$\epsilon\in(0,\epsilon_0)$, such that, if $p$ is a critical point of
$\cal{J}_\epsilon$, then there exist a $\C^{2,\alpha}$-domain
$\Omega_{\epsilon,p}$ containing $p$, with
$\vol_g(\Omega_{\epsilon,p})=|B_\epsilon|$,  and a function
$u_{\epsilon,p}\in\C^{2,\alpha}(\bar{\Omega}_{\epsilon,p})$, such that
$u_{\epsilon,p}$ is a positive solution to the overdetermined
problem~\eqref{eq:onp} on the domain $\Omega_{\epsilon,p}$ with
$\la:=\ep^{-2}$. Here, $\Omega_{\epsilon,p}$ is a
$\C^{2,\alpha}$-small perturbation of the geodesic ball
$B_\epsilon^g(p)$. Moreover, each function $\cal{J}_\epsilon$ has at
least a critical point and, hence, the existence of a family of
solutions $(\Omega_{\epsilon,p},u_{\epsilon,p},\epsilon^{-2})$ to the
overdetermined problem~\eqref{eq:onp} for all
positive $\epsilon<\epsilon_0$ is assured.
\end{proof}

\section{Symmetry results on manifolds of nonconstant curvature}\label{sec:symmetry}

In this section we investigate overdetermined boundary
problems on spaces with a high
degree of symmetry (that is, with a large isometry group), but maybe not
high enough to have constant curvature. We will show that the
solution domains we have constructed to certain overdetermined
problems inherit some symmetries from the ambient space and we will exploit
this fact to prove partial symmetry results. To avoid
complicated or unnatural assumptions, in this section we restrict our attention to the
subclass of overdetermined problems of form~\eqref{eq:onp_laplacian},
that is, with position-independent nonlinearities.

We start with a purely technical result that we will subsequently
employ to derive more visual consequences. To state it in an economic
way, we will sometimes write the solutions to the overdetermined
boundary problem~\eqref{eq:onp_laplacian} as a triple $(u,\Om,\la)$
and borrow some notation used in Step~1 of the proof of Theorem~\ref{th:main2}:

\begin{proposition}\label{prop:symmetry}
	Let $(M,g)$ be a compact or homogeneous Riemannian manifold
        and assume that the nonlinearity $G(z)$ is
        position-independent and satisfies Assumption~A. Then:
	\begin{enumerate}
		\item Suppose that $(B^g_{\epsilon(1+v_p)}(p),u_p,\epsilon^{-2})$ and
                  $(B^g_{\epsilon(1+v_q)}(q),u_q,\epsilon^{-2})$ are
                  solutions to the overdetermined boundary
                  problem~\eqref{eq:onp_laplacian}, with the domains
                  $B^g_{\epsilon(1+v_p)}(p)$ and
                  $B^g_{\epsilon(1+v_q)}(q)$ being centered at points~$p$ and
                  $q$ of~$M$, respectively, and of the same volume: $\vol_g
                  \bigl(B^g_{\epsilon(1+v_p)}(p)\bigr)=\vol_g
                  \bigl(B^g_{\epsilon(1+v_q)}(q)\bigr)$. There is some
                  $\de>0$ such that, if
                  $\varphi$ is a local isometry of
                  $M$ with $\varphi(p)=q$ and
\[
\epsilon+\|v_p\|_{\C^{2,\alpha}(S^{n-1})}+\|v_q\|_{\C^{2,\alpha}(S^{n-1})}
+ \|\hu_p-\phi\|_{\C^{2,\alpha}(B_1)}
+\|\hu_q-\phi\|_{\C^{2,\alpha}(B_1)} <\delta,
\]
then $\varphi(B^g_{\epsilon(1+v_p)}(p))=B^g_{\epsilon(1+v_q)}(q)$ and $u_p=u_q\circ\varphi$.
		
		\item There exists $\delta>0$ such that, if $\varphi$
                  is an isometry of $M$ with $\varphi(p)=p$ for some
                  $p\in M$, and
                  $(B^g_{\epsilon(1+v)}(p),u,\epsilon^{-2})$ is a
                  solution to \eqref{eq:onp_laplacian}, where the domain
                  $B^g_{\epsilon(1+v)}(p)$ is centered at $p$ and
\[
\epsilon+\|v\|_{\C^{2,\alpha}(S^{n-1})} +\|\hu-\phi\|_{\C^{2,\alpha}(B_1)} <\delta,
\]
then $\varphi(B^g_{\epsilon(1+v)}(p))=B^g_{\epsilon(1+v)}(p)$ and $u=u\circ\varphi$.
	\end{enumerate}
\end{proposition}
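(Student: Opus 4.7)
The plan is to combine Proposition~\ref{cor:unique} (uniqueness of small solutions centered at a prescribed point with a prescribed volume) with the elementary observation that a local isometry transports solutions to solutions. For part~(1), I will push the triple $(B^g_{\epsilon(1+v_p)}(p),u_p,\epsilon^{-2})$ forward under $\varphi$ to obtain another solution of~\eqref{eq:onp_laplacian} centered at $q$, verify that it satisfies all the smallness hypotheses of Proposition~\ref{cor:unique} together with $(B^g_{\epsilon(1+v_q)}(q),u_q,\epsilon^{-2})$, and read off from the uniqueness statement that the two must coincide. Part~(2) will follow as the special case $p=q$, $v_p=v_q=v$, $u_p=u_q=u$ of part~(1).

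\textbf{Carrying out part (1).} Since $\varphi$ is a local isometry with $\varphi(p)=q$, the differential $d\varphi_p\colon T_pM\to T_qM$ is a linear isometry, and by naturality of the exponential map $\varphi\circ\exp^g_p=\exp^g_q\circ\, d\varphi_p$ on a neighborhood of $0\in T_pM$. Assuming $\ep$ is smaller than the injectivity radius at~$p$, this immediately gives
\[
\varphi\bigl(B^g_{\epsilon(1+v_p)}(p)\bigr)=B^g_{\epsilon(1+\tilde v_p)}(q)\,,\qquad \tilde v_p:=v_p\circ (d\varphi_p)^{-1}\in C^{2,\alpha}(T_q^1M)\,,
\]
with $\|\tilde v_p\|_{C^{2,\alpha}(T_q^1M)}=\|v_p\|_{C^{2,\alpha}(T_p^1M)}$. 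The function $\tilde u_p:=u_p\circ\varphi^{-1}$ satisfies $\Delta_g\tilde u_p+\epsilon^{-2}G(\tilde u_p)=0$ in $B^g_{\epsilon(1+\tilde v_p)}(q)$ with the same (constant) Neumann datum and zero Dirichlet datum as $u_p$, because $\varphi$ commutes with $\Delta_g$ and preserves the outward unit normal. Isometry invariance also yields $\vol_g B^g_{\epsilon(1+\tilde v_p)}(q)=\vol_g B^g_{\epsilon(1+v_p)}(p)$, which equals $\vol_g B^g_{\epsilon(1+v_q)}(q)$ by hypothesis, and the definition of the center of mass as the minimum of $r\mapsto \tfrac12\int_{\partial\Omega}d^2(r,\cdot)\,d\sigma_g$ is manifestly isometry equivariant, so the image domain is centered at $q=\varphi(p)$. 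Finally, if one uses the normal coordinates at $q$ determined by the orthonormal frame $d\varphi_p\cdot e_i$ (where $e_i$ is the frame at $p$ defining $\hu_p$), then the rescaled function $\widehat{\tilde u}_p$ associated with $\tilde u_p$ as in Step~1 of the proof of Theorem~\ref{th:main2} is exactly $\hu_p$ read through $d\varphi_p$, so $\|\widehat{\tilde u}_p-\phi\|_{C^{2,\alpha}(B_1)}=\|\hu_p-\phi\|_{C^{2,\alpha}(B_1)}$. Taking $\delta$ no larger than the uniformity constant provided by Proposition~\ref{cor:unique}, all hypotheses of that proposition are met for the pair $(\tilde u_p,B^g_{\epsilon(1+\tilde v_p)}(q))$ and $(u_q,B^g_{\epsilon(1+v_q)}(q))$ at the point $q$, which forces $\tilde u_p=u_q$ and $B^g_{\epsilon(1+\tilde v_p)}(q)=B^g_{\epsilon(1+v_q)}(q)$; this is exactly the conclusion of~(1).

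\textbf{Part (2) and main obstacle.} With the above in hand, part~(2) is simply part~(1) applied to the single solution $(B^g_{\epsilon(1+v)}(p),u,\epsilon^{-2})$ viewed simultaneously as the $p$- and the $q$-solution, with $\varphi(p)=p$; the hypotheses are trivially symmetric and the uniformity of $\delta$ in the compact (respectively homogeneous) setting is inherited from Proposition~\ref{cor:unique}, resp.\ Proposition~\ref{th:quotient}. The only real point requiring care—not a deep obstacle but the place where one must be precise—is checking that the normalization implicit in the definition of $\hu$ is compatible with $\varphi$, so that the bound on $\|\hu_p-\phi\|_{C^{2,\alpha}(B_1)}$ transfers to $\widehat{\tilde u}_p$ without any additional factor. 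This is taken care of by choosing, as above, the orthonormal frame at $q$ to be the $d\varphi_p$-image of the frame at $p$, which is a legitimate choice because $G$ is position-independent and hence $\phi$ depends only on the radial coordinate, not on the particular frame used.
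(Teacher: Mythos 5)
Your proof is correct and follows essentially the same route as the paper: transport one solution triple by the isometry, verify that the transported triple satisfies the hypotheses of Proposition~\ref{cor:unique} (same volume, same center of mass, and unchanged $C^{2,\alpha}$-norms after rescaling, since $\phi$ is radial and $d\varphi_p$ is orthogonal), and invoke that uniqueness result; the only cosmetic difference is that you push forward to $q$ while the paper pulls back to $p$, and you make explicit the isometry-equivariance of the center of mass, which the paper leaves implicit.
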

\begin{proof}
	In view of Proposition~\ref{th:quotient}, Proposition~\ref{cor:unique} holds both in the compact and the homogeneous settings. Thus, let $\delta$ be as in Proposition~\ref{cor:unique}. 
	
	By the equivariance of the Laplace-Beltrami operator with
        respect to isometries, we have that
        $(\varphi(B^g_{\epsilon(1+v_p)}(p)),u_{p}\circ\varphi^{-1},\epsilon^{-2})$
        is also a solution to \eqref{eq:onp_laplacian}. Notice that
        $\varphi^{-1}(B^g_{\epsilon(1+v_q)}(q))=B^g_{\epsilon(1+w)}(p)$,
        where $w=v_q\circ\varphi_{*p}$. Moreover, $\varphi\circ
        Y_{\epsilon,w,p}=Y_{\epsilon,v_q,q}\circ\varphi_{*p}$, where
        $Y_{\epsilon,v,p}$ refers to the parametrization defined
        in~\eqref{eq:Y}. Thus, we have
        $\widehat{u_q\circ\varphi}=u_q\circ\varphi\circ
        Y_{\epsilon,w,p}=u_q\circ Y_{\epsilon,v_q,q}\circ
        \varphi_{*p}=\hu_q\circ\varphi_{*p}$. This, together with the
        facts that $\phi$ is radially symmetric and $\varphi_{*p}$ is
        an orthogonal transformation, implies that
\[
\|\widehat{u_q\circ\varphi}-\phi\|_{\C^{2,\alpha}(B_1)} =
\|\hu_q-\phi\|_{\C^{2,\alpha}(B_1)}<\delta\,.
\]
Since moreover
\[
\|w\|_{\C^{2,\alpha}(S^{n-1})}=\| v_{q}\circ\varphi_{*p}
\|_{\C^{2,\alpha}(S^{n-1})} = \|v_q\|_{\C^{2,\alpha}(S^{n-1})}<\delta\,,
\]
Proposition~\ref{cor:unique} applies, so it follows that 
\[
\varphi^{-1}(B^g_{\epsilon(1+v_q))}(q))=B^g_{\epsilon(1+v_p)}(p), \qquad \text{and}\qquad
u_{q}\circ\varphi =u_p\,,
\]
which proves~(i).
		
	Part (ii) follows in the same way from the previous arguments.
\end{proof}

Part (i) of Proposition~\ref{prop:symmetry} implies in particular
that, for a compact or homogeneous Riemannian manifold $M$ with
isometry group $\G:=\mathrm{Isom}(M)$, the collection of solutions
to~\eqref{eq:onp_laplacian} constructed in the proof of
Theorem~\ref{th:main2} from the same solution $(B_1,\phi,1)$ to
\eqref{eq:onp_laplacian} are invariant under~$\G$. Part~(ii) means
that, if 
\[
\G_p:=\{h\in \G:h(p)=p\}
\]
is the isotropy group at some point $p\in M$, then the solutions to~\eqref{eq:onp_laplacian} constructed in the proof of Theorem~\ref{th:main2} around the point $p$ are invariant under $\G_p$. In other words, if a small perturbed ball centered at $p$ is a solution domain to~\eqref{eq:onp_laplacian} constructed as in the proof of Theorem~\ref{th:main2}, then such perturbed ball is invariant under $\G_p$. 

\begin{remark}\label{rem:unique_phi}
	If the Dirichlet problem 
\[
\Delta_{\hat{g}} u+ G(u)=0 \text{ in } B_1,\qquad u\rvert_{\partial B_1}=0
\]
admits a unique positive solution for each metric $\hat{g}$ on $B_1$ that is close enough to the Euclidean one, then in Proposition~\ref{prop:symmetry} (and in the remaining results of this section) we can remove the requirement that the solutions involved are close enough to $\phi$. In this case, not only the solution domains to~\eqref{eq:onp_laplacian} constructed as in the proof of Theorem~\ref{th:main2} are invariant under the isotropy group $\G_p$, but also any solution domain that is a small perturbed ball centered at $p$. 
Indeed, if $(B^g_{\epsilon(1+v)}(p),u,\epsilon^{-2})$ is a solution to~\eqref{eq:onp_laplacian} and $\epsilon$ and $\|v\|_{C^{2,\al}(S^{n-1})}$ are small enough, then Proposition~\ref{prop:implicit} guarantees the existence of a solution $\hat{u}'$ to the Dirichlet problem for $\Delta_{\hat{g}}\hat{u}'+G(\hat{u}')=0$ in $B_1$ (where $\hat{g}$ is the pullback metric of $g\rvert_{B^g_{\epsilon(1+v)}(p)}$ under the parametrization $Y$ given in~\eqref{eq:Y}). By the uniqueness assumption in the beginning of this remark we get that $\hat{u}'=\hat{u}$ and, as $\|\hat{u}'-\phi\|_{C^{2,\al}(B_1)}$ is small by Proposition~\ref{prop:implicit}, then also $\|\hat{u}-\phi\|_{C^{2,\al}(B_1)}$ is small, which proves our claim.

The uniqueness for such Dirichlet problem holds, for example, for all concave functions $G$ such that $G(0)>0$, and either $G>0$ and $\lim_{t\to +\infty}G(t)t^{-1}=0$, or that there exists $\beta>0$ such that $G(\beta)=0$, see~\cite[\S2.2]{Lions}. This applies, for instance, to guarantee that small perturbations of small geodesic balls that are solution domains to the overdetermined linear problem $\Delta u+1=0$ are invariant under the isotropy. Another class of nonlinearities satisfying uniqueness of solutions is that of those nonnegative, strictly increasing and strictly convex functions $G$ with $G(0)=0$ and such that $\lim_{z\to\infty} G'(z)\leq (\lambda_2(B_1) /\lambda_1(B_1)-\delta)G'(0)$ for some positive $\delta<\lambda_2(B_1) /\lambda_1(B_1)$, see~\cite{Amann:mathz} (the introduction of $\delta$ with respect to the assumptions in~\cite{Amann:mathz} arises from the need of the uniqueness result not only for the Euclidean metric, but for all sufficiently close-by metrics on $B_1$).
\end{remark}

% \begin{remark}\marginpar{\tiny I introduced this remark. I don't know if it is worth it.}
% 	In the case that $G$ satisfies one of the two assumptions on the Introduction analyzed in Section~2, namely $G(0)>0$, or $G(0)=0$ with $G'(0)>0$ and $G''(0)\neq 0$, then the solution $\phi$ to~\eqref{eq:onp_laplacian} in $B_1$ (with $\lambda=1$) has small $C^{2,\al}$-norm. Thus, if the solution $(B^g_{\epsilon(1+v)}(p),u,\epsilon^{-2})$ in Proposition~\ref{prop:symmetry}(ii) is such that $\|\hat{u}\|_{C^{2,\alpha}(B_1)}$ is small enough, then $\|\hat{u}-\phi\|_{C^{2,\alpha}(B_1)}$ is also small. Hence, this assumption in Proposition~\ref{prop:symmetry}(ii) may be substituted by the assumption that $\|\hat{u}\|_{C^{2,\alpha}(B_1)}$ is small enough.
% \end{remark}

Now we derive some interesting consequences of
Proposition~\ref{prop:symmetry}. The first one applies to the
so-called two-point homogeneous spaces, which are those Riemannian
manifolds $M$ such that, for any two pairs of points $p_1$, $p_2$,
$q_1$, $q_2\in M$, there exists an isometry of $M$ mapping $p_1$ to
$q_1$ and $p_2$ to $q_2$. It is well-known that a Riemannian manifold
is two-point homogeneous if and only if it is homogeneous and
isotropic, that is, for any tangent vectors $v\in T_p M$ and $w\in T_q
M$, there exists an isometry $\varphi$ of $M$ such that $\varphi(p)=q$
and $\varphi_{*p}v=w$; or equivalently, for each $p\in M$ the isotropy
group $\G_p$ acts transitively on the unit sphere of $T_pM$, via the
isotropy representation $\G_p\times T_p M\to T_p M$ defined as 
$$
(\varphi,v)\mapsto \varphi_{*p}v.
$$
Moreover, these manifolds are Riemannian symmetric spaces~\cite{Szabo}, that is, the geodesic symmetry around each point is a global isometry~\cite{Helgason}. Therefore, the two-point homogeneous spaces are precisely the Euclidean spaces $\R^n$, their symmetric quotients, and the rank one symmetric spaces, that is, the round spheres $S^n$, the real projective spaces $\R P^n$, the complex projective spaces $\mathbb{C} P^n$, the quaternionic projective spaces $\mathbb{H} P^n$, the Cayley projective plane $\mathbb{O}P^2$, the real hyperbolic spaces $\R H^n$, the complex hyperbolic spaces $\mathbb{C} H^n$, the quaternionic hyperbolic spaces $\mathbb{H} H^n$, and the Cayley hyperbolic plane $\mathbb{O} H^2$. For more information on these notions and classifications, see~\cite[Appendices A.3-4]{BCO2} and~\cite{Helgason}. 

The following result asserts that, in a two-point homogeneous space
with a position-independent nonlinearity, the only small perturbations
of a small geodesic sphere where the overdetermined problem admits a
nontrivial solution are precisely geodesic spheres, and the solution
is radially symmetric (i.e.\ it depends only on the distance to the
center). The result is a direct consequence of
Propositions~\ref{cor:unique} and~\ref{th:quotient}, and of the fact that, in a two-point homogeneous space, $\G_p$ acts transitively on the unit sphere of $T_pM$, for all $p\in M$.
\begin{corollary}\label{cor:2point}
	Let $M$ be a two-point homogeneous Riemannian manifold and
        assume that the nonlinearity $G(z)$ satisfies
        Assumption~A. Then there exists $\delta>0$ such that, if
        $(B^g_{\epsilon(1+v)}(p),u,\epsilon^{-2})$ is a solution to
        \eqref{eq:onp_laplacian}, where $B^g_{\epsilon(1+v)}(p)$ is
        centered at $p$, with 
$$
\epsilon+\|v\|_{\C^{2,\alpha}(S^{n-1})}+\|\hu-\phi\|_{\C^{2,\alpha}(B_1)}
<\delta,
$$
then $B^g_{\epsilon(1+v)}(p)$ is a geodesic ball and $u$ is radially symmetric around~$p$.
\end{corollary}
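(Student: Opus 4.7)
The plan is to combine Proposition~\ref{prop:symmetry}(ii) with the defining transitivity property of two-point homogeneous spaces. Since any two-point homogeneous manifold is in particular homogeneous, Proposition~\ref{th:quotient} guarantees that the constructions of Section~\ref{sec:setup}---and hence Proposition~\ref{prop:symmetry}---remain valid on $M$. I take $\delta>0$ to be the constant supplied by part~(ii) of that proposition.

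Fix $p\in M$ and suppose that $(B^g_{\epsilon(1+v)}(p),u,\epsilon^{-2})$ is a solution to~\eqref{eq:onp_laplacian}, centered at $p$, satisfying
\[
\epsilon+\|v\|_{\C^{2,\alpha}(S^{n-1})}+\|\hu-\phi\|_{\C^{2,\alpha}(B_1)}<\delta.
\]
Denote by $\G_p:=\{\varphi\in\mathrm{Isom}(M):\varphi(p)=p\}$ the isotropy subgroup at~$p$. Applying Proposition~\ref{prop:symmetry}(ii) to each $\varphi\in\G_p$ yields
\[
\varphi\bigl(B^g_{\epsilon(1+v)}(p)\bigr)=B^g_{\epsilon(1+v)}(p)\quad\text{and}\quad u\circ\varphi=u.
\]

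Isometries fixing $p$ intertwine the exponential map with their own differential, $\varphi\circ\exp^g_p=\exp^g_p\circ\varphi_{*p}$, so the first identity transfers to the unit tangent sphere as $v\circ\varphi_{*p}=v$ on $T^1_pM$. Two-point homogeneity is equivalent to the statement that the isotropy representation $\varphi\in\G_p\mapsto\varphi_{*p}$ acts transitively on $T^1_pM$; hence every $\G_p$-invariant continuous function on $T^1_pM$ must be constant, and $v$ is constant. Therefore the solution domain is genuinely a geodesic ball centered at~$p$. By the same transitivity together with the $\G_p$-equivariance of $\exp^g_p$, the group $\G_p$ acts transitively on each geodesic sphere around $p$ that lies inside the solution domain; the invariance $u\circ\varphi=u$ then forces $u$ to be constant on every such sphere, so $u$ depends only on the geodesic distance to~$p$.

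No step in this argument presents a serious difficulty: the substantial analytic content is already encapsulated in Proposition~\ref{prop:symmetry}(ii), and the remainder is a direct appeal to the transitivity of the isotropy representation that characterizes two-point homogeneous spaces.
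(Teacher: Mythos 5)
Your proposal is correct and follows essentially the same route as the paper: the paper derives the corollary from the uniqueness result (Proposition~\ref{cor:unique}, via Proposition~\ref{th:quotient} in the homogeneous setting) together with the transitivity of the isotropy representation on $T_p^1M$, which is exactly the content of Proposition~\ref{prop:symmetry}(ii) that you invoke. Your final step---passing from $\G_p$-invariance of the domain and of $u$ to constancy of $v$ and radial symmetry of $u$ via the equivariance of $\exp_p^g$---is the same short argument the paper has in mind.
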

	
Another interesting case occurs when the ambient manifold is a
Riemannian symmetric space $M:=\G/\K$ (not necessarily of rank one),
where $\G$ is the identity connected component of the isometry group
of $M$ and $\K:=\G_{p_0}$ is the isotropy group at some base point
$p_0\in M$. Here $\G$ acts transitively on $M$, which implies that $M$
is homogeneous. Let $k$ be the rank of $M$, that is, the dimension of
the smallest totally geodesic and flat submanifold of $M$ (which is
called a maximal flat of $M$). It is a well-known fact~\cite{HPTT,
  Ko:tams} that the isotropy group $\G_p$ at any point $p$ of
$M=\G/\K$ acts isometrically with cohomogeneity $k$ on $M$, that is,
the orbits of maximal dimension of the $\G_p$-action on $M$ have
codimension $k$ in $M$. Moreover, this action is hyperpolar, which
means that there exists a totally geodesic and flat submanifold
$\Sigma$ of $M$ (which turns out to be a maximal flat) that intersects
all the $\G_p$-orbits and always perpendicularly. In this case, the
Weyl group of the action~\cite{Ko:jdg} is the discrete group of
reflections of $\Sigma$ given by
\[
W_\Sigma:=N_{\G_p}(\Sigma)/Z_{\G_p}(\Sigma),
\]
where
\[
N_{\G_p}(\Sigma):=\{g\in \G_p:g(\Sigma)=\Sigma\},\quad
Z_{\G_p}(\Sigma):=\{g\in \G_p:g(q)=q \text{ for all }q\in \Sigma\}.
\]
Then, it is immediate to derive the following application of Propositions~\ref{cor:unique} and~\ref{th:quotient}.
	
\begin{corollary}
Let $M$ be a symmetric space of dimension $n\geq 2$ and rank $k$, and
let the nonlinearity $G(z)$ satisfy Assumption~A. Then there exists
$\delta>0$ such that, if $(B^g_{\epsilon(1+v)}(p),u,\epsilon^{-2})$ is
a solution to \eqref{eq:onp_laplacian}, where $B^g_{\epsilon(1+v)}(p)$
is centered at $p$ and 
\[
\epsilon +\|v\|_{\C^{2,\alpha}(S^{n-1})}
+\|\hu-\phi\|_{\C^{2,\alpha}(B_1)} <\delta,
\]
then the solution $(B^g_{\epsilon(1+v)}(p),u,\epsilon^{-2})$ is invariant under a group of isometries of $M$ that acts with cohomogeneity $k-1$ on the hypersurface $\partial B^g_{\epsilon(1+v)}(p)$.

Moreover, let $\Sigma$ be a maximal flat through $p$. Then there exists a hypersurface $L=\partial B^g_{\epsilon(1+v)}(p)\cap \Sigma$ in $\Sigma$ which is invariant under the Weyl group $W_\Sigma$, and such that $\partial B^g_{\epsilon(1+v)}(p)$ is the union of all the $\G_p$-orbits through the points of $L$.
\end{corollary}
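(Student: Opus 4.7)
The plan is to deduce the statement from Proposition~\ref{prop:symmetry}(ii) together with standard structural properties of the isotropy action on a Riemannian symmetric space. Since $M=\G/\K$ is in particular homogeneous, Proposition~\ref{th:quotient} ensures that Proposition~\ref{prop:symmetry} is available, supplying a uniform $\delta>0$. Applying part~(ii) of that proposition to every element of the isotropy group $\G_p$, I obtain at once that, under the stated smallness hypothesis, the domain $\Omega:=B^g_{\epsilon(1+v)}(p)$, its boundary $\partial\Omega$, and the solution~$u$ are all invariant under~$\G_p$.

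Next I would invoke the well-known fact (see \cite{HPTT, Ko:tams}) that the isotropy action of $\G_p$ on the symmetric space $M=\G/\K$ is hyperpolar of cohomogeneity equal to the rank~$k$, with any maximal flat $\Sigma$ through~$p$ serving as a section. Consequently the principal $\G_p$-orbits in~$M$ have dimension $n-k$. Because $\partial\Omega$ is a $\G_p$-invariant hypersurface and singular orbits of a hyperpolar action form a set of codimension at least two, $\partial\Omega$ must contain a principal orbit, and therefore the generic $\G_p$-orbit on $\partial\Omega$ has dimension $n-k$. This gives the cohomogeneity claim $(n-1)-(n-k)=k-1$ for the action of $\G_p$ on $\partial\Omega$.

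For the second part I define $L:=\partial\Omega\cap\Sigma$. Since $\Sigma$ is a section of the $\G_p$-action, every $\G_p$-orbit in~$M$ meets~$\Sigma$; combined with the $\G_p$-invariance of $\partial\Omega$, this immediately yields $\partial\Omega=\G_p\cdot L$. A transversality count gives $\dim L=\dim\partial\Omega+\dim\Sigma-\dim M=k-1$, so $L$ is a hypersurface of~$\Sigma$. The normalizer $N_{\G_p}(\Sigma)$ preserves both $\Sigma$ and $\partial\Omega$, hence preserves $L$, while the centralizer $Z_{\G_p}(\Sigma)$ fixes $\Sigma$ pointwise and therefore acts trivially on $L$; consequently the induced action on $L$ factors through the Weyl group $W_\Sigma=N_{\G_p}(\Sigma)/Z_{\G_p}(\Sigma)$, establishing the desired $W_\Sigma$-invariance of~$L$.

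No genuine obstacle is expected; the argument is essentially a translation of Proposition~\ref{prop:symmetry}(ii) into the language of hyperpolar isotropy actions. The only point that merits a brief verification, rather than being dispatched by citation, is the dimension count for $\G_p$-orbits in $\partial\Omega$: one must rule out that $\partial\Omega$ lies entirely inside the singular stratum, which is immediate because the latter has codimension at least two in~$M$ whereas $\partial\Omega$ has codimension one.
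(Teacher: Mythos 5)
Your argument is correct and is essentially the paper's own route: the paper declares this corollary "immediate" from Propositions~\ref{cor:unique} and~\ref{th:quotient} (i.e.\ via Proposition~\ref{prop:symmetry}(ii), which gives $\G_p$-invariance of the solution) combined with the quoted facts that the isotropy action is hyperpolar of cohomogeneity~$k$ with a maximal flat~$\Sigma$ as section and Weyl group~$W_\Sigma$. You simply fill in the routine details the paper leaves implicit (the codimension-two singular stratum ruling out degenerate orbit dimensions on $\partial\Omega$, the dimension/transversality count for $L$, and the normalizer/centralizer argument for $W_\Sigma$-invariance), which are all sound.
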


An interesting problem is to investigate the shape of the hypersurface $L$ of $\Sigma$, in particular, if $M$ has rank two and, hence, $L$ is a Jordan curve in $\Sigma$. A similar problem, related to the shape of soap bubbles in symmetric spaces, has been studied in~\cite{Hsiang}.

Corollary~\ref{cor:2point} admits the following interesting improvement on harmonic
spaces. Recall that a Riemannian manifold $M$ is said to be locally harmonic at
$p\in M$ if all geodesic spheres of sufficiently small radius centered
at $p$ have constant mean curvature. $M$ is a harmonic space if it is
locally harmonic at all points. These spaces have not been classified
yet, and the only known examples are the two-point homogeneous spaces
mentioned above, their quotients, and certain solvable Lie groups
endowed with a left-invariant metric called Damek-Ricci spaces; for
more information on harmonic spaces, see~\cite{BTV}
and~\cite[Chapter~6]{Besse}. Notice that this result does not require
the manifold to be homogeneous or compact:

\begin{theorem}\label{th:harmonic}
	Let $M$ be a Riemannian manifold that is locally harmonic at $p\in M$ and assume that the
        nonlinearity $G(z)$ satisfies Assumption~A. Then there exists
        $\delta>0$ such that, if
        $(B^g_{\epsilon(1+v)}(p),u,\epsilon^{-2})$ is a solution to
        \eqref{eq:onp_laplacian}, where $B^g_{\epsilon(1+v)}(p)$ is
        centered at $p$ and
\[
\epsilon +\|v\|_{\C^{2,\alpha}(S^{n-1})}
+\|\hu-\phi\|_{\C^{2,\alpha}(B_1)} <\delta,
\]
then $B^g_{\epsilon(1+v)}(p)$ is a geodesic ball and $u$ is radially symmetric around~$p$.
\end{theorem}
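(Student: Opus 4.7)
My plan is to produce a radial competitor solution on a geodesic ball centered at~$p$ with the same volume as the given domain $\Omega$, and then invoke the uniqueness statement of Proposition~\ref{cor:unique} to conclude that $\Omega$ and $u$ must coincide with this radial competitor. The construction is purely local at~$p$, so no global symmetry of~$M$ is needed.

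The crucial consequence of local harmonicity at~$p$ is that, in geodesic polar coordinates $(r, \xi)$ centered at~$p$, the volume density of the exponential map depends only on~$r$. Consequently, for any function of the form $\tilde{u}(r)$, the Laplacian $\Delta_g \tilde{u}$ is again a function of $r$ alone, and the semilinear equation reduces to a radial ODE on small geodesic balls centered at~$p$. First I would pick $\rho$ close to $\epsilon$ such that $|B^g_\rho(p)| = |\Omega|$, which is possible by writing $\rho = \epsilon(1+v'_0)$ for a small constant $v'_0 \in \R$. Then I would apply the implicit function theorem argument used in the proof of Proposition~\ref{prop:implicit}, restricted to the radial subspace: since the metric pulled back to $B_1$ via the parametrization~$Y$ of~\eqref{eq:Y} is rotationally invariant in this case and $G$ is position-independent, the construction preserves radial symmetry, and I obtain a radial solution $u_*$ to
\[
\Delta_g u_* + \epsilon^{-2} G(u_*) = 0 \text{ in } B^g_\rho(p), \quad u_* = 0 \text{ on } \partial B^g_\rho(p),
\]
which is close to~$\phi$ in the relevant $C^{2,\alpha}$-neighborhood. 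Because $u_*$ is radial, its normal derivative on $\partial B^g_\rho(p)$ is automatically constant on each sphere, so $(u_*, B^g_\rho(p), \epsilon^{-2})$ also solves the overdetermined problem~\eqref{eq:onp_laplacian}.

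To finish, I would apply Proposition~\ref{cor:unique} to the two triples $(u, B^g_{\epsilon(1+v)}(p), \epsilon^{-2})$ and $(u_*, B^g_{\epsilon(1+v'_0)}(p), \epsilon^{-2})$: both are centered at~$p$ (the first by hypothesis, the second trivially as a geodesic ball), both have the same volume by construction, and all smallness conditions hold (the smallness of $v$ and $\hat{u}-\phi$ is part of the hypothesis, while the smallness of $v'_0$ and $\hat{u}_*-\phi$ follows from the implicit function theorem construction of the radial competitor). The conclusion $u = u_*$ and $\Omega = B^g_\rho(p)$ then gives precisely the desired rigidity. The main obstacle I anticipate is the bookkeeping check that the radial competitor really falls inside the uniqueness neighborhood determined by Proposition~\ref{cor:unique}, i.e., tracking how the threshold~$\delta$ from that proposition constrains how close $\rho$ must be to $\epsilon$; no essentially new ideas beyond a radially symmetric version of Proposition~\ref{prop:implicit} should be required.
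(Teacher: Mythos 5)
Your proposal follows essentially the same route as the paper: use local harmonicity at~$p$ to deduce that the Laplacian maps radial functions to radial functions, build a radial solution on a geodesic ball of matching volume via the implicit function theorem (the paper sets this up directly on $\C^{2,\alpha}_{0,0}([0,1])$, using the radial volume density $\theta_p$ to write the operator as a one-dimensional ODE), and then invoke Proposition~\ref{cor:unique}. One small imprecision: harmonicity does \emph{not} make the pulled-back metric $\hg$ rotationally invariant on $B_1$ (that would be isotropy, a much stronger condition), so the justification ``the metric pulled back to $B_1$ is rotationally invariant'' is wrong; what is true, and what you correctly stated earlier in your proposal, is that the volume density is radial, which suffices to conclude that $\Delta_{\hg}$ preserves radiality and that the relevant implicit-function-theorem argument can be restricted to radial data.
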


\begin{proof}
	Let $\C^{2,\alpha}_{0,0}([0,1])$ denote the Banach space of functions in $\C^{2,\alpha}([0,1])$ that vanish at $1$ and whose first derivatives vanish at $0$. Let $\epsilon_0>0$ be such that $\epsilon_0(1+\epsilon_0)$ is less than the injectivity radius of $M$. Consider the map
	\[
	\begin{array}{rccl}
	\cal{N}_p\colon & [0,\epsilon_0)\times(-\epsilon_0,\epsilon_0)\times \C^{2,\alpha}_{0,0}([0,1]) & \to & \C^{0,\alpha}([0,1])
	\\
	&(\epsilon,\eta,\psi)& \mapsto & \psi''+(\frac{n-1}{r}+\frac{\widehat{\theta}_p'}{\widehat{\theta}_p})\psi'+G\circ\psi.
	\end{array}
	\]
	Here, $\widehat{\theta}_p=\theta_p\circ Y$, where $Y=\exp_p^{g}\,\circ\, T_{\epsilon(1+\eta)}\colon B_1\subset T_p M\to B_{\epsilon(1+\eta)}^{g}(p)$, and $\theta_p$ is the function that measures the infinitesimal change of volume around $p$, which is radial since $M$ is locally harmonic at $p$ (see~\cite[Chapter~6, \S{}A-B]{Besse}), and hence, $\widehat{\theta}_p$ (as well as $\psi$) is a function of the radial distance $r$. Notice that we can regard $\cal{N}_p$ as the operator $\Delta_{\hg}+G(\cdot)$ applied to a radial function $\psi\in\C^{2,\alpha}_{0,0}(B_1)$, where $\hg=\epsilon^{-2}Y^*g$.
	
	Since the function $\phi$ in Assumption~A is radial, abusing
        the notation again, we denote also by $\phi$ the corresponding
        function in $\C^{2,\alpha}_{0,0}([0,1])$. Then
        $\cal{N}_p(0,0,\phi)=0$ and
        $D_\psi\cal{N}_p(0,0,\phi)=(\cdot)''+\frac{n-1}{r}(\cdot)'+G'\circ\phi$. Since
        by Assumption~A the operator $\Delta+G'\circ\phi$ is
        invertible, the operator
\[
(\cdot)''+\frac{n-1}{r}(\cdot)'+G'\circ\phi\colon
\C^{2,\alpha}_{0,0}([0,1])\to\C^{0,\alpha}([0,1])
\]
is also invertible.
	Then $D_{\psi}\cal{N}_p(0,0,\phi)$ is invertible and, hence, the implicit function theorem guarantees, for each $\epsilon$ and $\eta$ small enough, the existence of $\psi$ such that $\cal{N}_p(\epsilon,\eta,\psi)=0$. Therefore, there exists a radial solution $(B^g_{\epsilon(1+\eta)}(p),\psi\circ Y^{-1},\epsilon^{-2})$ to~\eqref{eq:onp_laplacian}; notice that the Neumann condition is satisfied trivially, since $\psi\circ Y^{-1}$ is radial. 
	
	Finally, let $\eta\in\R$ be such that $\vol(B^g_{\epsilon(1+\eta)}(p))=\vol_g(B^g_{\epsilon(1+v)}(p))$. Then, the application of Proposition~\ref{cor:unique} to the triples $(B^g_{\epsilon(1+v)}(p),u,\epsilon^{-2})$  and $(B^g_{\epsilon(1+\eta)}(p),\psi\circ Y^{-1},\epsilon^{-2})$ concludes the proof.
\end{proof}

Observe that Theorem~\ref{th:harmonic} combined with Remark~\ref{rem:unique_phi} implies Theorem~\ref{th:harmonic_intro} in the Introduction.

\begin{remark}\label{R.la1}
	Although the arguments developed in Section~\ref{sec:setup} do not apply directly to the first
        eigenvalue problem $\Delta_g u+\lambda_1 u=0$ because $G(t):=t$
        does not satisfy Assumption~A, the
        symmetry results in this section apply to this case as
        well. The reason is that, by construction, the uniqueness
        result in Proposition~\ref{cor:unique} holds for the solutions constructed in~\cite{DS:DCDS,PS:AIF} to the first eigenvalue problem, just by imposing the natural requirement that the solutions are normalized to be positive and have $L^2$-norm equal to $1$. Since, under this additional constraint, we have uniqueness of solutions to the Dirichlet problem, Remark~\ref{rem:unique_phi} applies and, hence, no requirements on the proximity of $\hu$ to $\phi$ are needed.  
\end{remark}

\section*{Acknowledgements}

The first author is supported by projects MTM2016-75897-P (AEI/FEDER) and ED431F 2017/03, and by the European Union's Horizon 2020 research and innovation programme under the Marie Sklodowska-Curie grant agreement No.~745722. The second and third authors
have been supported by the ERC Starting Grants 633152 and 335079,
respectively. All authors acknowledge financial support from the
Spanish Ministry of Economy and Competitiveness, through the Severo
Ochoa Program for Centers of Excellence in R\&D (SEV-2015-0554).

\end{document}